\patchcmd{\@settitle}{\uppercasenonmath\@title}{}{}{}
\patchcmd{\@setauthors}{\MakeUppercase}{}{}{}
\patchcmd{\section}{\normalfont\scshape}{\large}{}{}
\newtheorem{Thm}{Theorem}[section]
\newtheorem{Lem}[Thm]{Lemma}
\newtheorem{Prop}[Thm]{Proposition}
\newtheorem{Cor}[Thm]{Corollary}
\newtheorem*{BT}{\bfseries Theorem~\ref{muofo} \mdseries (\cite{aB93})}
\theoremstyle{remark}
\newtheorem*{Rem}{Remark}
\theoremstyle{definition}
\newcommand{\bfig}[1]{\begin{figure}[#1] \begin{center}
\addtocounter{Thm}{1}}
\newcommand{\efig}{\end{center} \end{figure}\vskip0\baselineskip}
\newcommand{\btab}[1]{\begin{table}[#1] \begin{center} \addtocounter{Thm}{1}}
\newcommand{\etab}{\end{center} \end{table}\vskip0\baselineskip}
\newcommand{\babs}{\begin{center}\vskip-2\baselineskip\bfseries Abstract\end{center}
\vskip-1\baselineskip \small \leftskip1cm \rightskip1cm}
\newcommand{\eabs}{\normalsize \leftskip0cm \rightskip0cm}
\newcommand{\tw}{\textcolor{white}}
\definecolor{beamerblue}{rgb}{.2,.2,.7}
\newcommand{\tbb}{\textcolor{beamerblue}}
\definecolor{brightbblue}{rgb}{.6,.7,1}
\def\P{\mathbb{P}}
\title{\Large The M\"obius Function of Generalized Factor Order}
\author{\large Robert Willenbring\\
\normalsize Department of Mathematics, University of Mary\\
Bismarck, North Dakota, 58504\\
\href{mailto:rtwillenbring@umary.edu}{rtwillenbring@umary.edu}\\
\ \\
\large \today\\
\small Key Words: M\"obius function, factor order, discrete Morse theory, posets
}
\begin{document}
\pagestyle{plain}
\maketitle
\thispagestyle{plain}

\babs
We use discrete Morse theory to determine the M\"obius function of generalized factor order. Ordinary factor order on the Kleene closure $A^*$ of a set $A$ is the partial order defined by letting $u\leq w$ if $w$ contains $u$ as a subsequence of consecutive letters.  The M\"obius function of ordinary factor order was determined by Bj\"orner.  Using Babson and Hersh's application of Robin Forman's discrete Morse theory to lexicographically ordered chains, we are able to gain new understanding of Bj\"orner's result and its proof.  We generalize the notion of factor order to take into account a partial order on the alphabet $A$ and, relying heavily on discrete Morse theory, give a recursive formula in the case where each letter of the alphabet covers a unique letter.

\eabs

\section{Introduction}
\label{intro}
The M\"obius function of factor order was determined by Bj\"orner~\cite{aB93}.  This research was motivated by a desire to give a proof of Bj\"orner's result which provides a deeper explanation of the concepts he used to state his recursive formula, and to generalize these concepts so that they apply to a wider class of posets.  These investigations utilize discrete Morse theory, which also provides a useful context in which to consider the topology of posets.  The major result is a formula for the M\"obius function when factor order is generalized to include a partial ordering of letters, provided each letter covers a unique element.  Since it is clear this formula would have been nearly impossible to discover using other techniques of investigating M\"obius functions, this paper illustrates the ability of discrete Morse theory to simplify complex combinatorial problems of this nature.

Let $A$ be any set.  The \emph{Kleene closure}, $A^*$, is the set of all finite length words over $A$.  So if $w$ is a word and $w(i)$ is the $i^\text{th}$ letter in $w$, then
$$A^*=\{w=w(1)w(2)\ldots w(n): 0\leq n<\infty \text{ and } w(i)\in A \text{ for all } i\}.$$
The \emph{length} of $w$, denoted $|w|$, is the number of letters in $w$.  \emph{Ordinary factor order} on $A^*$ is the partial order on $A^*$ defined by letting $u\leq w$ if $w$ contains a subsequence of consecutive letters $w(i+1)w(i+2)\ldots w(i+n)$ such that $u(j)=w(i+j)$ for $1\leq j\leq n=|u|$.  When $u\leq w$, we call $u$ a \emph{factor} of $w$.  A word $u$ is \emph{flat} if $u(1)=\ldots =u(n)$, where $n=|u|$.

For example, if $A=\{a,b\}$, then $bbabb$ is an element of $A^*$ of length $|bbabb|=5$.  Factors of $bbabb$ include words such as $bbab$, $abb$, and $bb$.  Notice the word $bb$ is flat.

A closed interval $[u,w]$ in $A^*$ is the subposet consisting of all $v\in A^*$ satisfying $u\leq v\leq w$.  The open interval $(u,w)$ is defined similarly.  If the interval $[u,w]$ consists of the two elements $u$ and $w$, we say $w$ \emph{covers} $u$ and write $w\stackrel{ }{\rightarrow}u$.  The Hasse diagram of a poset $P$ is the graph whose vertices are the elements of $P$, and in which an edge is found between two vertices $w$ and $u$ if $w\stackrel{ }{\rightarrow}u$.  For example, if $A=\{a,b\}$, the Hasse diagram of $[b,bbabb]$ in $A^*$ is given in Figure~\ref{fig:b,bbabb}.

Let  $u<w$ be two elements in a poset $P$.  The M\"obius function $\mu$ is a map from $P\times P$ to the integers defined recursively as follows:
\vskip-2.5\baselineskip
\begin{align*}
\hskip.6cm\mu(u,u)&=1\\
\mu(u,w)&=\displaystyle-\sum_{u\leq v< w}\mu(u,v)\\
\mu(w,u)&=0.
\end{align*}
\vskip-.5\baselineskip

\bfig{t}

\begin{tikzpicture}
[every node/.style={ellipse,minimum size=5mm},
value/.style={yshift=-6mm,xshift=-4mm}]

\matrix[shape=rectangle,draw=none,row sep=5mm,column sep=4mm] {
 & \node(bbabb){\tw{bbabb}}; & \\
\node(babb){\tw{babb}}; &   & \node(bbab){\tw{bbab}}; \\
\node(abb){\tw{abb}}; & \node(bab){\tw{bab}};  & \node(bba){\tw{bba}}; \\
\node(bb){\tw{bb}}; & \node(ab){\tw{ab}};  & \node(ba){\tw{ba}}; \\
& \node(b){\tw{b}}; & \\
};
\node at (bbabb)[draw]{bbabb};
\node at (babb)[draw]{babb} edge (bbabb);
\node at (bbab)[draw]{bbab} edge (bbabb);
\node at (abb)[draw]{abb} edge (babb);
\node at (bab)[draw]{bab} edge (babb);
\draw (bab) edge (bbab);
\node at (bba)[draw]{bba} edge (bbab);
\node at (bb)[draw]{bb} edge (abb);
\node at (ab)[draw]{ab} edge (abb);
\draw (ab) edge (bab);
\node at (ba)[draw]{ba} edge (bab);
\draw (ba) edge (bba);
\draw (bb) edge (bba);
\node at (b)[draw]{b} edge (ba) edge (bb) edge (ab);

\node[value] at (b){\tbb{$1$}};
\node[value] at (ab){\tbb{$-1$}};
\node[value] at (bb){\tbb{$-1$}};
\node[value] at (ba){\tbb{$-1$}};
\node[value] at (abb){\tbb{$1$}};
\node[value] at (bab){\tbb{$1$}};
\node[value] at (bba){\tbb{$1$}};
\node[value] at (babb){\tbb{$0$}};
\node[value] at (bbab){\tbb{$0$}};
\node[value] at (bbabb){\tbb{$-1$}};

\end{tikzpicture}
\caption{The interval $[b,bbabb]$. The M\"obius value $\mu(b,v)$ is given to the lower left of each word $v$.}
\label{fig:b,bbabb}
\efig

Figure~\ref{fig:b,bbabb} contains the M\"obius value $\mu(b,v)$ for each word $v$ in the interval $[b,bbabb]$.  Working from the bottom of the diagram to the top, we see that $\mu(b,b)=1$ by the first condition in the definition.  Using the recursive definition, we can calculate $\mu(b,bb)=-\mu(b,b)=-1$.  In higher rows, we again use the recursive definition to calculate $\mu(b,abb)=-(\mu(b,bb)+\mu(b,ab)+\mu(b,b))=1$, and $\mu(b,babb)=-(\mu(b,abb)+\mu(b,bab)+\mu(b,bb)+\mu(b,ab)+\mu(ba,b)+\mu(b,b))=0$.

To state Bj\"orner's formula, we need a few more definitions.  A \emph{prefix} of a word $w\in A^*$ is a factor of $w$ that includes the first letter of $w$.  Similarly, a \emph{suffix} of $w$ is a factor of $w$ that contains the last letter of $w$.  A prefix or suffix is \emph{proper} if it is not equal to $w$.  Define the \emph{outer word} $o(w)$ of $w$ to be the longest factor that appears as both a proper prefix and suffix in $w$. Notice that $o(w)$ can be the empty word.  Define the \emph{inner word} $i(w)$ of $w$ to be the factor $i(w)=w(2)...w(n-1)$, where $n=|w|$. 

For example, prefixes of $bbabb$ include $bb$ and $bba$, while suffixes include $bb$ and $b$.  Note $o(bbabb)=bb$ and $i(bbabb)=bab$.  The word $abb$ has the empty word as its outer word.

The following theorem of Bj\"orner gives a formula for the M\"obius function in ordinary factor order. 

\begin{Thm}[\cite{aB93}]
\label{muofo}
In ordinary factor order, if $u\leq w$ then
$$\mu(u,w)=\begin{cases}
\mu(u,o(w)) & \mbox{if } |w|-|u|>2 \mbox{ and } u\leq o(w)\not\leq i(w),\\
1 &\mbox{if } |w|-|u|=2\mbox{, } w \mbox{ is not flat, and } u=o(w) \mbox{ or } u=i(w),\\
(-1)^{|w|-|u|} &\mbox{if } |w|-|u|<2,\\
0 &\mbox{otherwise.}\end{cases}$$
\end{Thm}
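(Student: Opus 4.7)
The plan is to follow the Babson--Hersh approach highlighted in the abstract: translate the Möbius calculation into a topological one, equip the Hasse diagram of $[u,w]$ with a lexicographic edge labeling, extract an acyclic Morse matching from it, and reduce the theorem to a combinatorial census of the unmatched (critical) cells. The starting point is Philip Hall's identity $\mu(u,w) = \tilde\chi(\Delta(u,w))$, where $\Delta(u,w)$ is the order complex of the open interval $(u,w)$; so it suffices to determine the homotopy type, or at least the signed critical-cell count, of $\Delta(u,w)$.

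To set up the matching, I would label each cover $v' \to v$ in $[u,w]$ by the leftmost position in $v'$ at which a letter can be deleted to produce $v$. A maximal chain $u = v_0 \lessdot v_1 \lessdot \cdots \lessdot v_n = w$ then carries a word of labels, which induces a lexicographic order on maximal chains and, via Babson--Hersh, on all chains in the order complex. Their machinery produces an acyclic matching whose critical cells are exactly those chains admitting no valid adjacent-label swap that preserves the chain and strictly lowers its label word. By Forman's discrete Morse theorem, $\Delta(u,w)$ is then homotopy equivalent to a CW complex with one cell per critical face, so $\mu(u,w)$ equals the signed count of these cells.

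The bulk of the argument is a case analysis of which deletion sequences are critical. When $|w|-|u| \le 1$ the open interval is essentially trivial and directly yields $(-1)^{|w|-|u|}$. When $|w|-|u|=2$ with $w$ not flat and $u \in \{o(w), i(w)\}$, I would exhibit exactly one top-dimensional critical cell, contributing $1$. When $|w|-|u|>2$ and $u \le o(w) \not\le i(w)$, the goal is to construct a dimension-preserving bijection between the critical cells of $\Delta(u,w)$ and those of $\Delta(u,o(w))$; together with Hall's identity this yields the recursion $\mu(u,w) = \mu(u, o(w))$. In every other case I would build a fixed-point-free involution on critical cells that shifts dimension by one, forcing the signed count, and hence $\mu(u,w)$, to vanish.

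The hard part will be the last step, especially the bijection in the recursive case. It demands explaining why the outer word $o(w)$, and not some other proper prefix or suffix, is the precise obstruction that governs when adjacent deletion labels can be swapped. The argument must track, for each critical sequence, both a leftmost-deletion witness coming from the prefix structure and a rightmost-deletion witness coming from the suffix structure, and show that the two witnesses can coexist only when the deletions concentrate on the two copies of $o(w)$ at the ends of $w$. Separately, in the ``otherwise'' cases, producing an explicit sign-reversing involution requires distinguishing the sub-case $u \not\le o(w)$ from the sub-case $u \le o(w) \le i(w)$, each of which needs its own cancellation mechanism; reconciling those mechanisms with the recursive and small-rank cases is where the real work lies.
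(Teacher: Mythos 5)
Your high-level strategy matches the paper's: both apply Babson--Hersh discrete Morse theory to a lexicographic ordering of maximal chains, identify critical cells, and sum $(-1)^{\dim}$ over them. However, your proposal is missing the single structural observation that makes the paper's proof tractable, and two of your claimed intermediate steps are not actually how the argument resolves.

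The paper's central lemma (Lemma~\ref{ascent}) is that, with the right labeling, \emph{ascents in a maximal chain never lie inside any minimally skipped interval}. Since a critical chain must have $J(C)$ covering the entire open chain, this forces a critical chain to have a strictly decreasing label sequence, which in turn means the \emph{lexicographically last} chain is the only possible critical chain: there is at most one critical cell in $\Delta(u,w)$, period. Your plan instead envisions a population of critical cells to be paired off by sign-reversing involutions in the ``otherwise'' case and to be put in bijection with those of $\Delta(u,o(w))$ in the recursive case. That is not a gap-free alternative route --- it is the hard version of a problem that the ascent lemma trivializes. In the ``otherwise'' case of the paper there are simply \emph{zero} critical cells, because the lone candidate chain fails to be covered by its $J(C)$; there is nothing to cancel. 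Unless you have an independent way to bound the number of critical cells, your involution-based plan has no finite target.

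Two smaller but real issues. First, your claimed ``dimension-preserving bijection'' in the recursive case is false: the paper shows $\#J(C)=2+\#J(C')$ when passing from $[u,w]$ to $[u,o(w)]$, so the critical-cell dimension shifts by exactly $2$; only the \emph{parity} is preserved, which is what you actually need for $\mu(u,w)=\mu(u,o(w))$. Second, your labeling (position removed, indexed relative to the \emph{current} word $v'$) differs from the paper's, which indexes removals by their position in the \emph{top} word $w$, so that each chain id is a permutation of a fixed set $\{1,\dots,|w|\}$. Your scheme yields repeated labels along a chain and a different lexicographic order, so the ascent lemma --- which is what controls everything --- would have to be re-proved in your setting, and it is not obvious it holds in the same clean form. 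The $w$-relative labeling is doing real work: it is what lets one read off MSIs as strong descents and outer-word jumps, and hence what connects the combinatorics to the $o(w),i(w)$ conditions in Björner's formula.
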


Using this formula, we see that $\mu(b,bbabb)=\mu(b,bb)=-1$.  Since $b$ is the outer word of $bab$, we get $\mu(b,bab)=1$ by the second condition.  Notice that since $o(babb)=b<ab=i(w)$, $\mu(b,babb)=0$.  The reader is encouraged to verify the remaining values implied by the definition of the M\"obius function for the example in Figure~\ref{fig:b,bbabb} are consistent with those found by using this formula.  Notice this formula indicates the only possible values for the M\"obius function in $A^*$ are $-1$, $0$, and $1$.

To prove his result, Bj\"orner relied on successively removing irreducible elements in an interval, where an irreducible element is one covered by or covering exactly one element.  Bj\"orner also considered the M\"obius function of subword order~\cite{aB90}.  In~\cite{SV06}, Sagan and Vatter expanded upon his results in the subword order case.  In one proof of their result, they used the technique of critical maximal chains introduced by Babson and Hersh in~\cite{BH05}.  This technique uses discrete Morse theory, which was developed by Forman~\cite{rF95}.  Since our first goal is to apply the result of Babson and Hersh to reprove Bj\"orner's formula, we will introduce the notation and definitions we need to make use of the relevant theorem.  For a more complete introduction to discrete Morse theory, see Forman's primer~\cite{rF02}.

Discrete Morse theory is an adaptation of Morse theory that can be used to analyze the topology of a simplicial complex.  An \emph{abstract simplicial complex} is a set of vertices $V$ and a set $K$ of subsets of $V$ satisfying the following conditions:
\vskip-1.75\baselineskip

\begin{align*}\mbox{if } v\in V \mbox{, then } \{v\}\in K,\end{align*}
\vskip-1.25\baselineskip

and
\vskip-2.25\baselineskip

\begin{align*}
\mbox{if } \alpha\in K \mbox{ and } \gamma\subseteq\alpha\mbox{, then } \gamma\in K.\end{align*}
\vskip-.75\baselineskip

Each $\alpha$ is called a \emph{simplex}, and if $\alpha<\beta$, $\alpha$ is called a \emph{face} of $\beta$.  The \emph{dimension} of a simplex $\alpha$ is the number of vertices in $\alpha$ minus $1$.  Writing $\alpha^d$ will indicate the simplex $\alpha$ has dimension $d$.

Let $K$ be a simplicial complex and assume $K$ has an empty simplex of dimension $-1$ which is contained in every other simplex.  A function $f: K\longrightarrow\mathbb{R}$ is a \emph{discrete Morse function} if every simplex $\alpha^d$ satisfies the following conditions: 
\vskip-1.5\baselineskip

\begin{align}&\#\{\beta^{d+1}> \alpha | f(\beta)\leq f(\alpha)\}\leq 1\label{dmt1}\\
&\#\{\gamma^{d-1}< \alpha | f(\gamma)\geq f(\alpha)\}\leq 1\label{dmt2}
\end{align}
\vskip-.5\baselineskip

We denote the set in (\ref{dmt1}) by $\alpha^+$ and the set in (\ref{dmt2}) by $\alpha_-$.  

A simplex $\alpha$ is \emph{critical} if $\#\alpha^+=\#\alpha_-=0$.  Note this definition states that with at most one local exception, a Morse function increases with respect to the dimension of a simplex.

\bfig{t}
\begin{tikzpicture}[scale=2,every node/.style={ellipse},
value/.style={yshift=-1mm,xshift=1mm}]
\filldraw[fill=white!80!black
]
(0,0) -- (1,1.4142) -- (2,0) -- cycle;
\draw (1,1.4142) -- (3,1.4142) -- (2,0);

\filldraw (0,0) circle (.5mm);
\filldraw (1,1.4142) circle (.5mm);
\filldraw (2,0) circle (.5mm);
\filldraw (3,1.4142) circle (.5mm);
\node[yshift=-3mm] at (0,0){\tbb{$0$}};
\node[yshift=3mm] at (1,1.4142){\tbb{$4$}};
\node[yshift=-3mm] at (2,0){\tbb{$2$}};
\node[yshift=3mm] at (3,1.4142){\tbb{$8$}};

\node[yshift=-3mm] at (1,0){\tbb{$1$}};
\node[yshift=3mm] at (2,1.4142){\tbb{$6$}};
\node[xshift=-3mm] at (.5,.7071){\tbb{$3$}};
\node[xshift=3mm] at (1.5,.7071){\tbb{$7$}};
\node[xshift=3mm] at (2.5,.7071){\tbb{$9$}};

\node at (1,.7071){\tbb{$5$}};

\end{tikzpicture}
\caption{An example of a Morse function on a simplicial complex}
\label{fig:morse}
\efig

Figure~\ref{fig:morse} contains an example of a simplicial complex $K$ consisting of $4$ simplices of dimension $0$, $5$ simplices of dimension $1$, and $1$ simplex of dimension $2$.  The values of a Morse function $f$ appear next to each simplex.  Note that the edge labeled $9$ and vertex labeled $0$ are the only critical simplices because locally, they are the only simplices for which the Morse function increases with respect to dimension.

One can prove that at most one of the sets $\alpha^+$ and $\alpha_-$ has size $1$.  This result is crucial in proving the results concerning discrete Morse functions in this introduction.  Note that since $\beta\in\alpha^+$ implies $\alpha\in\beta_-$, it follows that simplices which are not critical come in pairs, and these pairs satisfy $f(\alpha^d)>f(\beta^{d+1})$.  A \emph{Morse matching} is a partition of the simplices of $K$ into sets of size one or two such that each one element set contains a critical simplex of a Morse function $f$ and each two element set consists of two non-critical simplices $\alpha^d<\beta^{d+1}$ satisfying $f(\alpha^d)>f(\beta^{d+1})$.

The Morse matching of the simplices for the function shown in Figure~\ref{fig:morse} is $\{0\},\{2,1\},$ $\{4,3\},\{7,5\},\{8,6\},\{9\}$, where, as an abuse of notation, each number refers to the simplex to which it is assigned.

Critical simplices are important from a topological viewpoint because Forman shows in~\cite{rF95} that $K$ is homotopy equivalent to a CW-complex with exactly one cell of dimension $d$ for each critical simplex $\alpha$ of dimension $d$.  Forman also proves the following Theorem in~\cite{rF95}.

\begin{Thm}[Weak Morse Inequalities]
\label{WMI}
Let $\tilde{m}_d$ be the number of critical simplices of dimension $d$, $\tilde{b}_d$ be the $d$-th reduced Betti number over the integers, and $\tilde{\chi}$ be the reduced Euler characteristic.  Then
\vskip-2\baselineskip

\begin{align*}\tilde{b}_d\leq\tilde{m}_d \mbox{ for } d\geq-1\nonumber\end{align*}
\vskip-1\baselineskip

and
\vskip-2\baselineskip

\begin{align*}\tilde{\chi}(W)=\sum_{d\geq-1}(-1)^d\tilde{m}_d.\nonumber\end{align*}
\vskip-1\baselineskip
\end{Thm}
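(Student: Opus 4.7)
The plan is to deduce the theorem directly from the homotopy equivalence stated in the paragraph preceding it: namely, that $K$ is homotopy equivalent to a CW-complex $W$ having exactly one cell of dimension $d$ for each critical simplex $\alpha^d$. Once this is in hand, both assertions become consequences of standard algebraic topology applied to $W$, so the work is in organizing these consequences cleanly.

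First I would observe that since $K$ and $W$ are homotopy equivalent, they have the same reduced homology groups, and in particular the same reduced Betti numbers $\tilde{b}_d$. I would then compute $\tilde{b}_d$ using the cellular chain complex of $W$. By construction, the group of cellular $d$-chains $C_d(W)$ is free abelian of rank equal to the number of $d$-cells of $W$, which is exactly $\tilde{m}_d$. Since $\tilde{H}_d(W) = \ker\partial_d / \operatorname{im}\partial_{d+1}$ is a subquotient of $C_d(W)$, its rank is bounded above by $\operatorname{rank}\,C_d(W) = \tilde{m}_d$. This gives $\tilde{b}_d \leq \tilde{m}_d$ for all $d \geq -1$, where the index $d = -1$ accounts for the empty simplex corresponding to the augmentation needed for reduced homology.

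For the Euler characteristic identity, I would invoke the standard fact that for a finite CW-complex the reduced Euler characteristic equals both the alternating sum of reduced Betti numbers and the alternating sum of the numbers of cells in each dimension. Since $W$ has $\tilde{m}_d$ cells in dimension $d$, this immediately yields
\begin{equation*}
\tilde{\chi}(W) = \sum_{d \geq -1} (-1)^d \tilde{m}_d.
\end{equation*}
Combined with homotopy invariance of $\tilde{\chi}$, which carries the identity from $W$ back to $K$, this proves the second statement.

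The genuinely hard part would be the homotopy equivalence itself, which requires building $W$ cell by cell by attaching one cell for each critical simplex along maps determined by the Morse matching and the gradient flow it induces on $K$. That construction is the substance of Forman's argument in \cite{rF95}, and I would cite it rather than reproduce it; once it is invoked, the rest of the proof as sketched above is essentially bookkeeping with cellular chain complexes and Euler characteristics.
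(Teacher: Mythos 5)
The paper does not supply its own proof of this theorem; it simply cites Forman~\cite{rF95}, which is precisely where your argument defers the genuinely hard step (the homotopy equivalence between $K$ and a CW complex with one $d$-cell per critical $d$-simplex). Given that equivalence, your derivation of both $\tilde{b}_d\leq\tilde{m}_d$ (from the subquotient bound on the cellular chain groups) and of the Euler-characteristic identity (from the alternating-sum-of-cells formula and homotopy invariance) is the standard, correct bookkeeping, so there is nothing to correct.
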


Discrete Morse theory can be used to find information about the M\"obius function of a poset by using the connection between it and the reduced Euler characteristic of the order complex of a poset.  A chain in a poset is set of elements $\{v_0,v_1,\ldots,v_n\}$ such that $v_0>v_1>\ldots>v_n$.  For clarity, we write a chain $C$ as $C: v_0 >v_1>\ldots>v_n$.  Given two elements $u$ and $w$ of a poset $P$,  the order complex $\Delta(u,w)$ is the abstract simplicial complex whose simplices are the chains in the open interval $(u,w)$.  An important fact about the M\"obius function~\cite{rS96} is
$$\mu(u,w)=\tilde{\chi}(\Delta(u,w)).$$
Therefore, by using discrete Morse theory to investigate the chains of $(u,w)$, it is possible to calculate the M\"obius function and obtain additional information about the topology of the order complex.

Given two elements $u,w$ in a poset $P$, Babson and Hersh have developed a way of finding a Morse matching for $\Delta(u,w)$ which gives a relatively small number of critical simplices.  We need to develop a considerable amount of terminology to properly state their result.

Let $C: v_0 \stackrel{ }{\rightarrow}v_1\stackrel{ }{\rightarrow}\ldots\stackrel{ }{\rightarrow}v_n$ be a chain in a poset $P$.  Since each pair of adjacent elements are related by a cover, $C$ is called a \emph{saturated chain}.  The \emph{closed interval} of $C$ from $v_i$ to $v_j$ is the chain $C[v_i,v_j]:v_i \stackrel{ }{\rightarrow}v_{i+1}\stackrel{ }{\rightarrow}\ldots\stackrel{ }{\rightarrow}v_j$.  The \emph{open interval} of $C$ from $v_i$ to $v_j$, $C(v_i,v_j)$, and the half open intervals $C[v_i,v_j)$ and $C(v_i,v_j]$ are defined similarly.  The closed interval $C[v_{i},v_{i}]$ consisting of the single element $v_i$ will also be written $v_i$, but the context will always indicate whether we are referring to the element or the interval.  Notice that the interval $[u,w]$ is non-empty when $u\leq w$ in the poset $P$, while $C[v_i,v_j]$ is non-empty when $v_i\geq v_j$.  A  chain $C$ of the interval $[u,w]$ is a \emph{maximal chain} if $v_0=w$ and $v_n=u$.

Given two maximal chains  $C: v_0 \stackrel{ }{\rightarrow}v_1\stackrel{ }{\rightarrow}\ldots\stackrel{ }{\rightarrow}v_n$ and  $D: w_0 \stackrel{ }{\rightarrow}w_1\stackrel{ }{\rightarrow}\ldots\stackrel{ }{\rightarrow}w_n$ in an interval $[u,w]$, we say $C$ and $D$ \emph{agree to index $k$} if $v_i=w_i$ for all $i\leq k$.  We say $C$ and $D$ \emph{diverge from index $k$} if $C$ and $D$ agree to index $k$ and $v_{k+1}\neq w_{k+1}$.  

\btab{t}
\begin{longtable}[!h]{ccccccccc}
$v_0$ &  & $v_1$  &  & $v_2$  &  & $v_3$  &  & $v_4$\\
 $bbabb$ & $\longrightarrow$  & $babb$ & $\longrightarrow$  & $abb$ & $\longrightarrow$  & $bb$ & $\longrightarrow$  & $b$ \\
 $bbabb$ & $\longrightarrow$ & $babb$ & $\longrightarrow$ & $abb$ & $\longrightarrow$ & $ab$ & $\longrightarrow$ & $b$  \\
 $bbabb$ & $\longrightarrow$ & $babb$ & $\longrightarrow$ & $bab$ & $\longrightarrow$ & $ab$ & $\longrightarrow$ & $b$  \\
 $bbabb$ & $\longrightarrow$ & $babb$ & $\longrightarrow$ & $bab$ & $\longrightarrow$ & $ba$ & $\longrightarrow$ & $b$  \\
 $bbabb$ & $\longrightarrow$ & $bbab$ & $\longrightarrow$ & $bab$ & $\longrightarrow$ & $ab$ & $\longrightarrow$ & $b$ \\
 $bbabb$ & $\longrightarrow$ & $bbab$ & $\longrightarrow$ & $bab$ & $\longrightarrow$ & $ba$ & $\longrightarrow$ & $b$ \\
 $bbabb$ & $\longrightarrow$ & $bbab$ & $\longrightarrow$ & $bba$ & $\longrightarrow$ & $ba$ & $\longrightarrow$ & $b$ \\
 $bbabb$ & $\longrightarrow$ & $bbab$ & $\longrightarrow$ & $bba$ & $\longrightarrow$ & $bb$ & $\longrightarrow$ & $b$ \\
\end{longtable}
\caption{A poset lexicographic ordering of the eight maximal chains of $[b,bbabb]$}
\label{table:PLO}
\etab
\vskip0\baselineskip

In Table~\ref{table:PLO}, we have listed the eight maximal chains of $[b,bbabb]$.  The first two maximal chains agree to indices $0$, $1$, and $2$.  These two chains diverge from index $2$.

A total ordering $C_1<C_2<\ldots<C_n$ of the maximal chains of an interval is a \emph{poset lexicographic order} if it satisfies the following: suppose $C<D$ and $C$ and $D$ diverge from index $k$; if $C'$ and $D'$ agree to index $k+1$ with $C$ and $D$, respectively, then $C'<D'$.  

To illustrate the definition, let us investigate one case in Table~\ref{table:PLO}. Note the first chain and eighth chain agree to index $0$.  Since the second chain agrees with the first to index $1$, and the seventh chain agrees with the eighth to index $1$, we would need to have the second chain appear earlier than the seventh in order for this to be a poset lexicgoraphic order.  In fact, the ordering given in Table~\ref{table:PLO} is a poset lexicographic ordering of the maximal chains of $[b,bbabb]$.

To verify we have given a poset lexicographic ordering of the maximal chains of $[b,bbabb]$, it is easiest to informally discuss how it was created.  The formal construction appears in Section~\ref{ofo}.  To get this ordering, we record the positions $l_i$, relative to $w$, removed to get from $v_{i-1}$ to $v_i$.  The sequence $l_1-l_2-\ldots-l_n$ is called the chain id of the maximal chain.  By lexicographically ordering these chain ids, we get a poset lexicographic order (in terms of Babson and Hersh, this is a chain labeling).  To clearly indicate where each $l_i$ comes from, we can zero out the positions removed in each word, creating an embedding of the word $v_i$ into $w$.  If a word consists entirely of one letter (such as $bb$), removing any letter gives the same word, so by convention, we only allow the first (non-zero) position to be removed.  

In Table~\ref{table:CID}, we give same poset lexicographic order as above with this extra information about the order.  Notice that when only one digit numbers are used for each label $l_i$, lexicographically ordering the chain ids is equivalent to ordering $n$ digit numbers by size.

Recall we need to consider the chains of the open interval $(u,w)$ because the order complex $\Delta(u,w)$ is defined in terms of the open interval. Note that the maximal chains of $(u,w)$ are in a one to one correspondence with those of $[u,w]$: by removing the first and last vertex from a maximal chain in $[u,w]$, we get a maximal chain in $(u,w)$.  Thus, we will continue to list $v_0$ and $v_n$ in our examples to make it easier to identify the labels $l_1$ and $l_n$.

\btab{t}
\begin{longtable}[!h]{rccccccccc}
Chain Id & $v_0$ & $l_1$ & $v_1$  & $l_2$ & $v_2$  & $l_3$ & $v_3$  & $l_4$ & $v_4$ \\
1-2-3-4  & $bbabb$ & $1$  & $0babb$ & $2$  & $00abb$ & $3$  & $000bb$ & $4$  & $0000b$ \\
1-2-5-3 & $bbabb$ & $1$ & $0babb$ & $2$ & $00abb$ & $5$ & $00ab0$ & $3$ & $000b0$  \\
1-5-2-3 & $bbabb$ & $1$ & $0babb$ & $5$ & $0bab0$ & $2$ & $00ab0$ & $3$ & $000b0$  \\
1-5-4-3 & $bbabb$ & $1$ & $0babb$ & $5$ & $0bab0$ & $4$ & $0ba00$ & $3$ & $0b000$  \\
5-1-2-3 & $bbabb$ & $5$ & $bbab0$ & $1$ & $0bab0$ & $2$ & $00ab0$ & $3$ & $000b0$ \\
5-1-4-3 & $bbabb$ & $5$ & $bbab0$ & $1$ & $0bab0$ & $4$ & $0ba00$ & $3$ & $0b000$ \\
5-4-1-3 & $bbabb$ & $5$ & $bbab0$ & $4$ & $bba00$ & $1$ & $0ba00$ & $3$ & $0b000$ \\
5-4-3-1 & $bbabb$ & $5$ & $bbab0$ & $4$ & $bba00$ & $3$ & $bb000$ & $1$ & $0b000$ \\
\end{longtable}
\caption{The same ordering of the maximal chains of $[b,bbabb]$ with information\\ about the chain ids.}
\label{table:CID}
\etab
\vskip0\baselineskip

Suppose $C_1<C_2<\ldots<C_n$ is a lexicographic ordering of the maximal chains of $(u,w)$.   Recall that the chains of $(u,w)$ are the simplices in the order complex $\Delta(u,w)$.  Call a simplex contained in $C$ \emph{new} if it is not contained in any $C'$ for $C'<C$.  We can inductively define a Morse matching on $\Delta(u,w)$ by extending the matching at the $k^{\text{th}}$ step to the new simplices of $C$. In fact, Babson and Hersh show in~\cite{BH05} that a matching can be constructed in this manner so that during each step $k$, at most one new simplex is a critical simplex.  So, using this process, adding each maximal chain adds at most one critical simplex.  We will refer to a maximal chain that contributes a critical simplex as a \emph{critical chain}.

To motivate the next set of definitions, we consider which subchains of a maximal chain $C$ appear in a lexicographically earlier chain, and which ones are new simplices.  In a maximal chain $C$ of a poset lexicographic ordering, Babson and Hersh prove that each maximal subchain of $C$ which appears in a lexicographically earlier chain consists of a subchain of $C$ given by a skipping a single interval of consecutive ranks.  This is because the poset lexicographic ordering assures that if $C$ and $C'$ diverge from index $k$, but later are the same at index $\ell$, there is some chain $D'<C$, possibly equal to $C'$, such that $C$ and $D'$ are the same outside the interval $C(v_k,v_\ell)$.  These skipped intervals are referred to as minimally skipped intervals.  Therefore, a simplex in $C$ entirely belongs to a lexicographically earlier chain if it entirely misses any of the minimally skipped intervals.  Equivalently, the new simplices of $C$ are those subchains of $C$ which intersect every minimally skipped interval of $C$ non-trivially.

Formally, given an ordering of the maximal chains of $[u,w]$, a non-empty interval $(v_i,v_j)$ is a \emph{skipped interval} of a maximal chain $C$ if
$$C-C(v_i,v_j) \subset C' \mbox{ for some } C' < C.$$
It is a \emph{minimally skipped interval (MSI)} if it does not properly contain another skipped interval.  We write $I(C)$ for the set of all MSIs of a chain $C$.  To find the set $I(C)$, consider each interval $I\subseteq C$ and see if $C-I\subset C'$ for any $C'\subset C$, then throw out any such interval that is nonminimal.  

\btab{t}
\begin{longtable}[!h]{@{\extracolsep{-4pt}}rcccccccccl}
Chain Id & $v_0$ & $l_1$ & $v_1$  & $l_2$ & $v_2$  & $l_3$ & $v_3$  & $l_4$ & $v_4$  & MSIs \\
1-2-3-4  & $bbabb$ & $1$  & $0babb$ & $2$  & $00abb$ & $3$  & $000bb$ & $4$  & $0000b$  & \\
1-2-5[-]3 & $bbabb$ & $1$ & $0babb$ & $2$ & $00abb$ & $5$ & $[00ab0]$ & $3$ & $000b0$   & $[ab, ab]$ \\
1-5[-]2-3 & $bbabb$ & $1$ & $0babb$ & $5$ & $[0bab0]$ & $2$ & $00ab0$ & $3$ & $000b0$   & $[bab, bab]$ \\
1-5-4[-]3 & $bbabb$ & $1$ & $0babb$ & $5$ & $0bab0$ & $4$ & $[0ba00]$ & $3$ & $0b000$   & $[ba, ba]$ \\
5[-]1-2-3 & $bbabb$ & $5$ & $[bbab0]$ & $1$ & $0bab0$ & $2$ & $00ab0$ & $3$ & $000b0$   & $[bbab, bbab]$ \\
5[-]1-4[-]3 & $bbabb$ & $5$ & $[bbab0]$ & $1$ & $0bab0$ & $4$ & $[0ba00]$ & $3$ & $0b000$   & $[bbab, bbab]$ $[ba, ba]$ \\
5-4[-]1-3 & $bbabb$ & $5$ & $bbab0$ & $4$ & $[bba00]$ & $1$ & $0ba00$ & $3$ & $0b000$   & $[bba, bba]$ \\
5[-4-]3[-]1 & $bbabb$ & $5$ & $[bbab0$ & $4$ & $bba00]$ & $3$ & $[bb000]$ & $1$ & $0b000$ &  $[bbab, bba]$ $[bb, bb]$ \\
\end{longtable}
\vskip0\baselineskip

The new simplices, which are in $\displaystyle C\setminus(\cup_{C'<C} C')$, for selected maximal chains $C$:
\vskip.5\baselineskip

$1-5-2-3:\quad bab, \quad babb-bab,\quad bab-ab,\quad babb-bab-ab$\\
$5-4-3-1:\quad bba-bb,\quad bbab-bb,\quad bbab-bba-bb$
\vskip0\baselineskip

\caption{The MSIs of the maximal chains of $[b,bbabb]$.}
\label{table:MSIs}
\etab

For an example of both MSIs and new simplices, see Table~\ref{table:MSIs}.  In this table, we have placed brackets around each interval which is minimally skipped, and placed the corresponding brackets into the chain id.  For example, the intervals $D[bb,bb]$ and $D[bbab,bba]$ are minimally skipped intervals of chain $D$ with chain id $5-4-3-1$.  To see how one determines the minimally skipped intervals, consider the third chain $C$ with chain id $1-5-2-3$.  The only intervals $I$ satisfying $C-I\subset C'$ for some $C'<C$ are $C[bab,ab]$, since $C-C[bab,ab]$ is in the first chain, and $C[bab,bab]$, since $C-C[bab,bab]$ is in the second chain.  Since the second interval is contained in the first, the only MSI in the third chain is $C[bab,bab]$.  So the new simplices in this chain, which intersect this MSI, are the subchains $bab$, $babb-bab$, $bab-ab$, and $babb-bab-ab$.

The set of MSIs $I(C)$ \emph{covers} $C$ if its union equals the open interval $C(v_0,v_n)$.  This last definition reflects the fact that the order complex of an interval is constructed without the maximum and minimum elements.  Notice the set $I(D)$ for the maximal chain $D$ with chain id $5-4-3-1$ covers $D$, and that the set of MSIs $I(C)$ for any other maximal chain $C$ in the interval $[b,bbabb]$ does not cover $C$.

Notice $I(C)$ could contain intervals that overlap, that is, intervals with non-empty intersection. We will need to produce a set of disjoint intervals from $I(C)$, which we will call $J(C)$.  We construct $J(C)=\{J_1,J_2,\ldots\}$ as follows.  Order the intervals of $I(C)$ based on when they are first encountered in $C$.  Thus, $I_1$ will contain the word $v_i$ of smallest index that appears in any interval in $I(C)$, $I_2$ will contain the word $v_j$ of smallest index that appears in any interval in $I(C)$ not equal to $I_1$, etc.  Let $J_1=I_1$.  Then consider the intervals $I'_2=I_2-J_1$, $I'_3=I_3-J_1$, and so forth. Throw out any that are not containment minimal, and pick the first one that remains to be $J_2$.  Continue this process until no intervals remain to add to $J(C)$.

For an example of the difference between $I(C)$ and $J(C)$, we need to consider a new interval.  In the interval $[a,abbabb]$, whose maximal chains are found in Table~\ref{table:JC}, the chain $C$ with chain id $6-5-4-3-2$ has a set of MSIs $I(C)$ in which there is overlap.  To construct $J(C)$, we first add the MSI $I_1=C[abbab,abba]$ of $I(C)$ to $J(C)$.  Then we truncate the remaining intervals in $I(C)$.  In particular, $I_2=C[abba,abb]$ becomes $I'_2=C[abb,abb]$, while the chain  $I_3=C[abb,ab]$ does not overlap with $I_1$, giving $I'_3=I_3$.  Now we see that $I'_2\subset I'_3$.  So we remove $I'_3$ from the set of intervals under consideration, and add $I'_2$ to $J(C)$.  At this point, no intervals of the original set $I(C)$ remain to be considered, so $J(C)=\{I_1,I'_2\}$.

The following theorem of Babson and Hersh gives the connection between $J(C)$ and $\mu(u,w)$.  For a description, and example, of the Morse matching of simplices which leads to this Theorem, please see Appendix~\ref{BHmatch}.

\btab{t}

 $I(C)$ intervals for  $[a,abbabb]$
\begin{longtable}[!h]{@{\extracolsep{-4pt}}rccccccccccc}
Chain Id & $v_0$ & $l_1$ & $v_1$  & $l_2$ & $v_2$  & $l_3$ & $v_3$  & $l_4$ & $v_4$  & $l_5$ & $v_5$    \\
1-2-3-6-5  & $abbabb$ & $1$  & $0bbabb$ & $2$  & $00babb$ & $3$  & $000abb$ & $6$  & $000ab0$ & $5$  & $000a00$   \\
1-2-6[-]3-5 & $abbabb$ & $1$ & $0bbabb$ & $2$ & $00babb$ & $6$ & $[00bab0]$ & $3$ & $000ab0$ & $5$ & $000a00$    \\
1-2-6-5[-]3 & $abbabb$ & $1$ & $0bbabb$ & $2$ & $00babb$ & $6$ & $00bab0$ & $5$ & $[00ba00]$ & $3$ & $000a00$    \\
1-6[-]2-3-5 & $abbabb$ & $1$ & $0bbabb$ & $6$ & $[0bbab0]$ & $2$ & $00bab0$ & $3$ & $000ab0$ & $5$ & $000a00$    \\
1-6[-]2-5[-]3 & $abbabb$ & $1$ & $0bbabb$ & $6$ & $[0bbab0]$ & $2$ & $00bab0$ & $5$ & $[00ba00]$ & $3$ & $000a00$    \\
1-6-5[-]2-3 & $abbabb$ & $1$ & $0bbabb$ & $6$ & $0bbab0$ & $5$ & $[0bba00]$ & $2$ & $00ba00$ & $3$ & $000a00$    \\
6[-]1-2-3-5 & $abbabb$ & $6$ & $[abbab0]$ & $1$ & $0bbab0$ & $2$ & $00bab0$ & $3$ & $000ab0$ & $5$ & $000a00$    \\
6[-]1-2-5[-]3 & $abbabb$ & $6$ & $[abbab0]$ & $1$ & $0bbab0$ & $2$ & $00bab0$ & $5$ & $[00ba00]$ & $3$ & $000a00$    \\
6[-]1-5[-]2-3 & $abbabb$ & $6$ & $[abbab0]$ & $1$ & $0bbab0$ & $5$ & $[0bba00]$ & $2$ & $00ba00$ & $3$ & $000a00$    \\
6-5[-]1-2-3 & $abbabb$ & $6$ & $abbab0$ & $5$ & $[abba00]$ & $1$ & $0bba00$ & $2$ & $00ba00$ & $3$ & $000a00$   \\
6[-5[-]4[-]3-]2 & $abbabb$ & $6$ & $[abbab0$ & $5$ & $[abba00]$ & $4$ & $[abb000]$ & $3$ & $ab0000]$ & $2$ & $a00000$    \\
\end{longtable}

$J(C)$ intervals for  $[a,abbabb]$

The set $I(C)$ only changes for the last chain:

\begin{longtable}[!h]{@{\extracolsep{-4pt}}rccccccccccc}
Chain Id & $v_0$ & $l_1$ & $v_1$  & $l_2$ & $v_2$  & $l_3$ & $v_3$  & $l_4$ & $v_4$  & $l_5$ & $v_5$ \\
6[-5-]4[-]3-2 & $abbabb$ & $6$ & $[abbab0$ & $5$ & $abba00]$ & $4$ & $[abb000]$ & $3$ & $ab0000$ & $2$ & $a00000$  \\
\end{longtable}
\caption{Comparing $I(C)$, $J(C)$ for the maximal chains of $[a,abbabb]$.}
\label{table:JC}
\etab



\begin{Thm}[\cite{BH05}]
\label{chainmu}
Let $P$ be a poset and $[u,w]$ be a finite interval in $P$.  For any poset lexicographic order on the maximal chains of $[u,w]$, the above construction can be used to produce a Morse matching which matches all the new simplices of a chain, except possibly one which is critical.  The set $J(C)$ for each $C$ has the following properties:

\begin{enumerate}
\item\vskip-.25\baselineskip A maximal chain $C$ is critical if and only if $J(C)$ covers $C$.
\vskip-.35\baselineskip

\item If $C$ is critical, then the dimension of its critical simplex is
\vskip-1.25\baselineskip

$$d(C)=\#J(C)-1.$$

\item \vskip-.5\baselineskip The M\"obius value from $u$ to $w$ is
\vskip-1.45\baselineskip

$$\mu(u,w)=\sum_{C}(-1)^{d(C)}$$
\vskip-.1\baselineskip

where the sum is over all critical chains $C$ in $[u,w]$.
\end{enumerate}
\end{Thm}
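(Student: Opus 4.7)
The plan is to prove parts (1), (2), and (3) together by constructing an explicit Morse matching on the order complex $\Delta(u,w)$ inductively, one maximal chain at a time, following the framework sketched in Appendix~\ref{BHmatch}. The starting observation, already recorded in the paper, is that when chains are processed in the poset lexicographic order $C_1 < C_2 < \cdots < C_N$, the new simplices contributed by $C_k$ are exactly those subchains of $C_k$ that intersect every MSI of $C_k$ nontrivially. Because the disjoint intervals in $J(C_k)$ are built from $I(C_k)$ by the containment-minimal truncation procedure, meeting every $J_i$ is equivalent to meeting every MSI, so the set of new simplices of $C_k$ is characterized as those subchains hitting every $J_i$.

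The matching itself is defined stage by stage. For each new simplex $\sigma$ of $C_k$, scan $J(C_k) = \{J_1, J_2, \ldots, J_m\}$ in order and locate the smallest index $i$ for which a canonical toggle inside $J_i$ -- either deleting an element of $\sigma \cap J_i$ or inserting a designated element of $J_i \setminus \sigma$ -- produces another subchain of $C_k$ that still meets every $J_j$. Pair $\sigma$ with the toggled $\sigma'$. Because the $J_i$ are disjoint and the toggle is internal to a single $J_i$, this is a fixed-point-free involution on matched new simplices. A new simplex is unmatched precisely when no $J_i$ admits a toggle, and this forces $\sigma \cap J_i$ to be a single distinguished vertex for each $i$; such a configuration exists if and only if the intervals $J_i$ collectively cover $C_k(v_0,v_n)$, since any rank outside every $J_i$ would provide a free toggle. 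This establishes (1), and the unique unmatched simplex, when it exists, consists of one vertex per $J_i$, yielding dimension $\#J(C_k) - 1$, which is (2). Part (3) then follows by combining $\mu(u,w) = \tilde{\chi}(\Delta(u,w))$ with the Euler characteristic half of Theorem~\ref{WMI}: summing $(-1)^d$ over all critical simplices gives $\sum_C (-1)^{d(C)}$.

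The main obstacle is verifying that this incremental toggle pairing is genuinely a Morse matching rather than merely a fixed-point-free involution on the correct support. Two things must be checked. First, the pairing made at stage $k$ must involve only new simplices, so that matchings produced at different stages do not conflict; this reduces to the equivalence between hitting every $J_i$ and hitting every MSI, together with the observation that a toggle inside one $J_i$ preserves the intersection pattern with every other $J_j$. Second, and harder, the induced oriented face relation must have no directed cycles. This is handled by choosing the toggle rule to be consistent -- for instance, always resolving insertion versus deletion in the earliest available $J_i$ in a uniform direction -- and then arguing by induction on $i$ and on lexicographic position that no alternating cycle can close. This acyclicity verification is the technical heart of the Babson-Hersh construction and is the step that the appendix must ultimately pin down.
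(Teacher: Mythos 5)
The paper does not prove Theorem~\ref{chainmu}; it is cited from Babson and Hersh \cite{BH05}, and Appendix~\ref{BHmatch} merely records the matching used there, explicitly noting that it is ``not required to apply Theorem~\ref{chainmu}.'' So the relevant benchmark for your attempt is the Babson--Hersh construction as described in that appendix.

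Your proposal has a genuine gap at its first main step. You assert that, because the $J_i$ are built from $I(C)$ by containment-minimal truncation, ``meeting every $J_i$ is equivalent to meeting every MSI,'' and you then define the matching on subchains that hit every $J_i$. This equivalence is false, and the failure is exactly what the appendix warns about. The new simplices of $C$ are \emph{by definition} those subchains intersecting every interval of $I(C)$; but a subchain can hit an MSI $I_j$ only in its overlap with an earlier interval, and thereby miss the truncated $J_j$ entirely. Concretely, for the chain $C$ with id $6$-$5$-$4$-$3$-$2$ in $[a,abbabb]$ (Table~\ref{table:JC}), we have $I(C)=\{C[abbab,abba],\,C[abba,abb],\,C[abb,ab]\}$ and $J(C)=\{C[abbab,abba],\,C[abb,abb]\}$. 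The subchain $\{abba,ab\}$ intersects every MSI in $I(C)$ and so is a new simplex, but it misses $J_2=C[abb,abb]$ entirely. Consequently the pairing you build is supported on the wrong set of simplices; it cannot be the Morse matching claimed.

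Two further points compound this. First, your claim that when $J(C)$ fails to cover $C$ ``any rank outside every $J_i$ provides a free toggle'' is precisely where naive toggling breaks: the toggle vertex $\rho_{r+1}$ outside $J(C)$ still lies inside some interval of $I(C)$, so removing it can push the simplex out of the new set; the appendix's matching only works because it defines $\tau$ on the MSI-hitting set and then shows that $\tau h=\infty$ already forces $\rho_i\in h$ for all $i$, which in turn guarantees $h\setminus\{\rho_{r+1}\}$ is still new. Second, you flag acyclicity as the technical heart and leave it unproved, but without it you cannot invoke Theorem~\ref{WMI} at all. Granting (1) and (2), your step (3) is sound: $\mu(u,w)=\tilde\chi(\Delta(u,w))$ together with the Euler characteristic half of Theorem~\ref{WMI} gives the stated formula. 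As written, though, (1) and (2) do not go through along your route.
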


The rest of the paper is organized as follows.  In Section~\ref{ofo}, we use Theorem~\ref{chainmu} to reprove Bj\"orner's formula in Theorem~\ref{muofo}.  We also give a simple proof of his characterization of the topology of posets ordered by ordinary factor order using discrete Morse theory. In Section~\ref{gfo}, we will consider generalized factor order on the integers.  Section~\ref{gfoTF} considers generalized factor order on rooted forests, and gives a formula for the M\"obius function which contains the formula of Section~\ref{gfo} and Bj\"orner's formula as subcases.  This is not obvious and in fact two independent proofs are needed to establish the connection.  Section~\ref{open} discusses open problems related to this work.

\section{Ordinary Factor Order}
\label{ofo}

Let $A$ be any set.  Partially order $A^*$ using ordinary factor order.  To get a sense of the structure of the poset $A^*$, we first consider the covering relations in this poset.

\begin{Lem}
\label{cover}
A word $w=w(1)\ldots w(n)$ in $A^*$ can only cover the words $w(2)\ldots w(n)$ and $w(1)\ldots w(n-1)$.  These two words are distinct unless $w$ is flat, in which case they are equal and flat.
\end{Lem}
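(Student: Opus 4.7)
The plan is to show that the only candidates for $w$ to cover are factors of length $n-1$, and then to identify exactly which factors of $w$ have length $n-1$.

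First I would observe that if $u < w$ in ordinary factor order, then $u$ is a proper factor of $w$, so $|u| \leq n-1$. The core step is to rule out $|u| \leq n-2$. Suppose $u \leq w$ with $|u| = m \leq n - 2$, so there is an index $i$ with $0 \leq i$ and $i + m \leq n$ such that $u(j) = w(i+j)$ for $1 \leq j \leq m$. Since $m \leq n-2$, at least one of the inequalities $i \geq 1$ or $i + m \leq n - 1$ fails to exhaust $w$; in either case I can extend the consecutive occurrence by one letter on the appropriate side, producing a factor $v$ of $w$ with $|v| = m+1$ that contains $u$ as a factor. Then $u < v < w$, so $w$ does not cover $u$. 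Hence any word covered by $w$ has length exactly $n-1$.

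Next I would enumerate the factors of $w$ of length $n-1$. A consecutive subsequence $w(i+1)\ldots w(i+n-1)$ requires $0 \leq i \leq 1$, so the only possibilities are $w(1)\ldots w(n-1)$ (taking $i=0$) and $w(2)\ldots w(n)$ (taking $i=1$). Both are genuine factors of $w$, and both are covered by $w$ since any factor $v$ with $u < v \leq w$ would satisfy $n-1 < |v| \leq n$, forcing $v = w$.

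Finally, I would determine when these two factors coincide. Setting $w(1)\ldots w(n-1) = w(2)\ldots w(n)$ and comparing letter-by-letter yields $w(j) = w(j+1)$ for $1 \leq j \leq n-1$, which is precisely the condition that $w$ be flat; and in that case the common value is the flat word $w(1)^{n-1}$.

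There is no real obstacle here; the argument is entirely combinatorial and uses only the definition of factor order. The only point requiring mild care is the extension-by-one-letter step, which must handle the boundary case where $u$ sits flush against one end of $w$ by extending on the other side, and the degenerate case $u = \varepsilon$, where either extension works.
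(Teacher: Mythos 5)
Your proof is correct and takes essentially the same approach as the paper's: restrict attention to factors of length $n-1$, identify them as the longest proper prefix and suffix, and observe that their equality forces flatness. You spell out the "covered words have length exactly $n-1$" step a bit more carefully than the paper (which asserts it), but the structure is identical.
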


\begin{proof}
Note $w$ covers words of length $|w|-1$.  Since the letters of a factor of $w$ must appear consecutively in $w$, the longest proper suffix $w(2)\ldots w(n)$ and the longest proper prefix $w(1)\ldots w(n-1)$ are the only two words $w$ covers.  Should these two words be equal, we have $w(1)=w(2)$, $w(2)=w(3)$, $\ldots$, and $w(n-1)=w(n)$ so that $w(1)=w(2)=\ldots=w(n)$. This implies that $w$ is flat when it covers a single word.
\end{proof}

We will now formalize some of the concepts introduced informally in the introduction.  Suppose there is an element $0\notin A$.  A word $\eta\in(A\cup 0)^*$ is an \emph{expansion} of $u \in A^*$ if $\eta\in0^*u0^*$, where adjacency denotes concatenation of words and letters.  An \emph{embedding} of $u$ into $w$ is an expansion $\eta$ of $u$ with length $|w|$ such that for all $i$, either $\eta(i)=w(i)$ or $\eta(i)=0$.  In the latter case, we say $w(i)$ is \emph{reduced to $0$}. 

Note that the words appearing in Tables~\ref{table:CID} through~\ref{table:JC} are actually the embeddings of the words $v_i$.  For example, we note that $bb000$ is the prefix embedding of $bb$ into $bbabb$, while $000bb$ is the suffix embedding.

Let $[u,w]$ be an interval in $A^*$.  Let $w=v_0\stackrel{l_1}{\rightarrow}v_1\stackrel{l_2}{\rightarrow}\ldots\stackrel{l_{n-1}}{\rightarrow} v_{n-1}\stackrel{l_n}{\rightarrow}v_n=u$ be a maximal chain in $[u,w]$, where the $l_i$ are defined by the corresponding sequence of embeddings of $v_i$ into $w$, $\eta_{v_i}$, in the sense that
\vskip-.5\baselineskip

$$\eta_{v_i}(l_i)=0 \text{ and } \eta_{v_i}(j)=\eta_{v_{i-1}}(j) \text{ when } j\neq l_i.$$
In the case where $v_{i-1}$ is flat, we require $l_{i}$ to be the smaller of the two possible values.  Note that this gives each maximal chain its own unique sequence $l_1\ldots l_n$ which we can use to identify it.  We call this sequence a maximal chain's \emph{chain id}.  By lexicographically ordering these chain ids, we produce a poset lexicographic order on the maximal chains of $[u,w]$.  This is the order we will use to find the MSIs of the maximal chains $C$, and ultimately the sets $J(C)$ which will allow us to apply Theorem~\ref{chainmu}.  Examples of this ordering were given in Tables~\ref{table:CID} through~\ref{table:JC}.

To facilitate the exposition, we make the following definitions. A \emph{descent} in a maximal chain is a word $v_i$ where $l_i>l_{i+1}$.  We say $v_i$ is a \emph{strong descent} if $l_i>l_{i+1}+1$, and a \emph{weak descent} if $l_i=l_{i+1}+1$.  An \emph{ascent} in a maximal chain is a word $v_i$ where $l_i<l_{i+1}$.  In Table~\ref{table:CID}, the chain $5-4-1-3$ has $v_1=bbab$ as a weak descent, $v_2=bba$ as a strong descent, and $v_3=ba$ as an ascent.

\begin{Lem}
\label{descent}
Let $C:v_0\stackrel{l_1}{\rightarrow}v_1\stackrel{l_2}{\rightarrow}\ldots\stackrel{l_{n-1}}{\rightarrow} v_{n-1}\stackrel{l_n}{\rightarrow}v_n$ be a maximal chain in $[u,w]$.  If $v_i$ is a strong descent, then $v_i$ is an MSI in $C$.
\end{Lem}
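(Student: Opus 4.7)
The plan is to show that a strong descent at $v_i$ forces a very rigid structure on the positions $l_i, l_{i+1}$, which lets us swap the order of those two cover steps to produce an explicit earlier chain $C'$ witnessing that the open interval $C(v_{i-1}, v_{i+1}) = \{v_i\}$ is skipped.

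First I would use Lemma~\ref{cover} to observe that at each step of a maximal chain, the index $l_j$ removed must correspond to either the first or the last non-zero position of the embedding $\eta_{v_{j-1}}$, with the flat-word convention forcing the first when $v_{j-1}$ is flat. This restricts the pair $(l_i, l_{i+1})$ to only a handful of cases. If $v_{i-1}$ is flat, then $l_i$ is the first non-zero position of $\eta_{v_{i-1}}$, and every possible $l_{i+1}$ is then strictly greater than $l_i$, so no descent is possible. If $v_{i-1}$ is not flat and $l_i$ is the first non-zero position of $\eta_{v_{i-1}}$, then any legal $l_{i+1}$ is again at least $l_i + 1$, again excluding a descent. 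Hence a strong descent forces $l_i$ to be the last non-zero position of $\eta_{v_{i-1}}$. Within that situation, if $l_{i+1}$ were the last non-zero position of $\eta_{v_i}$ we would have $l_{i+1} = l_i - 1$, a merely weak descent; so $l_{i+1}$ must be the first non-zero position of $\eta_{v_i}$, which (since $l_i$ was at the other end) is also the first non-zero position of $\eta_{v_{i-1}}$.

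The key construction is now to swap the two removals. Let $v'_i$ be the word whose embedding is obtained from $\eta_{v_{i-1}}$ by reducing position $l_{i+1}$ to $0$. I would then verify: (a) $v_{i-1} \to v'_i$ is a legal cover, since $l_{i+1}$ is the first non-zero position of $\eta_{v_{i-1}}$; (b) from $v'_i$, the last non-zero position is still $l_i$, so $v'_i \to v_{i+1}$ is the legal cover that reduces position $l_i$, and one checks $\eta_{v'_i}$ with position $l_i$ zeroed equals $\eta_{v_{i+1}}$; (c) $v'_i \neq v_i$ because $v_{i-1}$ is not flat and its two covers are therefore distinct. Define $C'$ to be $C$ with $v_i$ replaced by $v'_i$.

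Then $C'$ agrees with $C$ up to index $i-1$, while the chain id of $C'$ has $l_{i+1}$ in position $i$ and the chain id of $C$ has $l_i > l_{i+1}$ in position $i$, so $C' < C$ in the poset lexicographic order. Since $C$ and $C'$ coincide outside of index $i$, we have $C - C(v_{i-1}, v_{i+1}) \subset C'$, showing that $C(v_{i-1}, v_{i+1})$ is a skipped interval. It is automatically minimal because it consists of the single element $v_i$, so every proper subinterval is empty. I expect the main obstacle to be nothing more than executing the case analysis carefully enough that the strong-descent hypothesis cleanly eliminates every configuration except the one in which the swap is well-defined; the flat-word convention is the only subtle point.
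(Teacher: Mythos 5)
Your high-level plan --- reduce to the case where the strong descent forces $l_i$ to be the last nonzero position and $l_{i+1}$ the first, then swap the two removals to produce a lexicographically earlier chain --- is exactly the paper's strategy, and your reductions in the first paragraph are sound. But there is a genuine gap in step (b), and your final sentence misdiagnoses where it lies: the strong-descent hypothesis does \emph{not} ``cleanly eliminate'' the problematic configuration involving the flat-word convention.

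The issue is that $v'_i$ (the word obtained from $v_{i-1}$ by reducing the first nonzero position) can be flat even when $v_i$ is a strong descent. Concretely, take $w=abbb$ with the chain $abbb \stackrel{4}{\rightarrow} abb \stackrel{1}{\rightarrow} bb$, so $v_{i-1}=abbb$, $l_i=4$, $l_{i+1}=1$, and $l_i - l_{i+1} = 3 > 1$ is a strong descent; yet $v'_i = bbb$ is flat. In that situation your claim (b) is simply false: by the flat-word convention only the \emph{first} nonzero position of $\eta_{v'_i}$ may be reduced, so the cover $v'_i \rightarrow v_{i+1}$ does not carry label $l_i$, and the embedding reached is not $\eta_{v_{i+1}}$ but a shifted one. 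Consequently the chain id of your $C'$ is not ``$C$'s chain id with $l_i$ and $l_{i+1}$ swapped'' but something different, and you have not verified that $C'$ is a legal maximal chain with the intended label sequence. The paper handles this case explicitly with a separate chain $D'$: when $v'$ is flat, every $v_j$ with $j>i$ is also flat, and one checks that the words $v_{i+1},\ldots,v_n$ still appear in $D'$ (with embeddings shifted one step to the right and labels $l_{i+2},\ldots,l_n,l_n+1$ rather than $l_i,l_{i+2},\ldots,l_n$), so $v_i$ is still skipped and $D' < C$ because its $i$-th label is $l_{i+1} < l_i$. Your construction can be rescued along the same lines --- the chain of \emph{words} you propose is still valid and lex-earlier --- but as written the verification is incorrect in the flat subcase, and that subcase must be argued rather than ruled out.
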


\begin{proof}
Since $l_{i}-l_{i+1}>1$, $v_i$ is $v_{i-1}$ with the last letter reduced and $i(v_{i-1})=v_{i+1}$.  Let $v'$ be $v_{i-1}$ with the first letter reduced. 
 
Suppose $v'$ is not flat.  Then
$$C':v_0\stackrel{l_1}{\rightarrow}\ldots\stackrel{l_2}{\rightarrow} v_{i-2}\stackrel{l_{i-1}}{\rightarrow} v_{i-1}\stackrel{l_{i+1}}{\rightarrow}v'\stackrel{l_{i}}{\rightarrow}v_{i+1}\stackrel{l_{i+2}}{\rightarrow}\ldots\stackrel{l_{n}}{\rightarrow} v_n$$ 
is a lexicographically earlier chain than $C$.  Hence, $v_i$ is a skipped interval in $C$.  Since $v_i$ is an interval consisting of a single element, $v_i$ is an MSI.

Suppose $v'$ is not flat.  Then $l_i$ cannot be reduced in $v'$.  However, the chain
$$D':v_0\stackrel{l_1}{\rightarrow}\ldots\stackrel{l_2}{\rightarrow} v_{i-2}\stackrel{l_{i-1}}{\rightarrow} v_{i-1}\stackrel{l_{i+1}}{\rightarrow}v'\stackrel{l_{i+2}}{\rightarrow}v_{i+1}\stackrel{l_{i+3}}{\rightarrow}\ldots\stackrel{l_{n}}{\rightarrow} v_{n-1}\stackrel{l_{n}+1}{\rightarrow} v_{n}$$
is a lexicographically earlier chain than $C$ because each $v_j$ for $j>i$ is flat.  Hence, $v_i$ is an MSI in this case as well.\end{proof}

Considering this result in Tables~\ref{table:MSIs} and~\ref{table:JC} shows that it accounts for a small proportion of the MSIs in ordinary factor order.  The next result, however, gives a great deal of information about the critical chains of ordinary factor order.

\begin{Lem}
\label{ascent}
Let $C:v_0\stackrel{l_1}{\rightarrow}v_1\stackrel{l_2}{\rightarrow}\ldots\stackrel{l_{n-1}}{\rightarrow} v_{n-1}\stackrel{l_n}{\rightarrow}v_n$ be a maximal chain in $[u,w]$.  If $v_i$ is an ascent, then it is not contained in any MSI.
\end{Lem}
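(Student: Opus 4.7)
The plan is to show that if $v_i$ is an ascent then any skipped interval $(v_a,v_b)$ of $C$ containing $v_i$ properly contains another skipped interval, so that $(v_a,v_b)$ cannot be minimally skipped.

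First I observe, directly from Lemma~\ref{cover} and the chain-id convention, that an ascent at $v_i$ is equivalent to the step $v_{i-1}\to v_i$ removing the leftmost letter of $v_{i-1}$; in particular $l_i$ is the smallest label available at step $i$. Suppose now that $(v_a,v_b)$ is a skipped interval containing $v_i$, with witness $C^* < C$. Two lex-order reductions simplify the picture. If $C^*$ shares some intermediate word $v_j$ with $C$ for $a<j<b$, the hybrid chain that follows $C^*$ from $v_0$ to $v_j$ and then $C$ from $v_j$ to $v_n$ is still lex-earlier than $C$ and witnesses the properly smaller interval $(v_a,v_j)$ being skipped; iterating, I may assume $v^*_j\ne v_j$ for every $j\in(a,b)$. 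An analogous reduction on the first index where the chain ids of $C$ and $C^*$ differ lets me assume this first divergence occurs at index $a+1$, so $l^*_{a+1}<l_{a+1}$; since the leftmost removal always gives the minimal label at its step, this forces $C$ to remove the last letter of $v_a$ at step $a+1$.

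Next I exploit the ascent. Because $l_i$ is already minimal at step $i$, the first divergence index $k$ between $C$ and $C^*$ cannot be $i$; and if $k>i$ then $C^*$ would agree with $C$ through $v_i$, contradicting $v^*_i\ne v_i$. Hence $k<i$, and at that step $C$ removes the last letter of $v_{k-1}$. The central construction is to form $C''$ from $C$ by swapping, in $C$'s sequence of leftmost/rightmost reductions, the last-letter removal at step $k$ with the first-letter removal at step $i$, leaving every other step unchanged. A direct bookkeeping of the embeddings shows that between steps $k$ and $i-1$ the embeddings in $C''$ are translated one position to the right of those in $C$, and that at step $i$ the two chains reunite at a common embedding of $v_i$. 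Because $C$ and $C''$ first differ at step $k$ with the smaller label going to $C''$, we have $C''<C$, so $C''$ witnesses the interval $(v_{k-1},v_i)$ being skipped. Since $a\le k-1$ and $i<b$, this is a proper sub-interval of $(v_a,v_b)$, contradicting minimality.

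The main technical obstacle is guaranteeing that $C''$ is a valid chain under the convention that a flat word must have its leftmost nonzero letter removed. The shifted intermediate words of $C''$ may become flat at a step where the corresponding $v_{j-1}$ in $C$ was not, in which case the last-letter removal specified in $C''$ at that step is disallowed and the naive swap fails. When this happens, the local structure of $w$ forces a flat run adjacent to a distinct letter, and one shows that this flat structure itself produces a singleton skipped interval at the bordering descent that lies properly inside $(v_a,v_b)$ and excludes $v_i$, again contradicting minimality. Handling this dichotomy cleanly is the most delicate part of the argument.
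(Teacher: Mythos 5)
Your proposal takes a genuinely different route from the paper. The paper's proof runs in two stages: it invokes Lemma~\ref{descent} to observe that any descent preceding $v_i$ in the MSI would be a strong descent (because it is followed by an ascent), hence a singleton MSI sitting properly inside the original---a contradiction; and for the remaining case in which the MSI begins with an ascent, it reduces to the MSI starting at $v_1$, where $l_1=1$ is forced, so $v_1$ appears in every lexicographically earlier chain and removing $v_1$ from the skipped interval leaves it still skipped, again contradicting minimality. You instead manufacture a lex-earlier witness $C''$ directly, by swapping the leftmost/rightmost reductions at steps $a+1$ and $i$, exhibiting $(v_a,v_i)$ as a properly contained skipped interval. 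In spirit this generalizes the adjacent-step swap used in the proof of Lemma~\ref{descent} to a pair of non-adjacent steps. (Worth noting: the first element $v_{a+1}$ of \emph{any} MSI is automatically a descent, since the witness $C^*$ agrees with $C$ through $v_a$ and $C^*<C$ forces $l^*_{a+1}<l_{a+1}$, so $l_{a+1}$ must be the rightmost position; the strong-descent argument alone then already gives the lemma, making both your $C''$ construction and the paper's $l_1=1$ case unnecessary.)

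There is, however, a genuine gap, and it is exactly the one you flag at the end. Your $C''$ must, at every intermediate step $a+1<j<i$, make the same leftmost-vs-rightmost choice that $C$ makes; but the shifted words $v''_j$ may become flat even though the corresponding $v_j$ are not. When $v''_{j-1}$ is flat and $C$ removes the rightmost at step $j$, the flat-word convention forces $C''$ to remove the leftmost instead, the shift-by-one invariant is destroyed, and the reunion with $C$ at $v_i$ fails. Your proposed repair (that the flat structure yields a singleton skipped interval at a bordering descent lying properly inside $(v_a,v_b)$ and missing $v_i$) is asserted, not proved: the proposal does not identify which descent is meant, why it is strong, or why the resulting singleton sits strictly inside $(v_a,v_b)$ and avoids $v_i$. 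As written, the proof is incomplete. The paper's route sidesteps this entirely because Lemma~\ref{descent} already carries out a separate flat-case analysis.
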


\begin{proof}
We will prove this lemma by contradiction.  Suppose $C[v_r,v_s]$ is an MSI that contains $v_i$. Notice that $v_i$ may only be preceded by ascents in this interval because if there are descents, the last one that occurs before $v_i$ would be a strong descent.  By Lemma~\ref{descent}, this would be an MSI, contradicting the minimality of $C[v_r,v_s]$.  Thus, it suffices to derive a contradiction for $v_i=v_{r}$, the first ascent in the interval $C[v_r,v_s]$.

Since $C[v_r,v_s]$ is an MSI of $C[w,u]$ if and only if it is an MSI of $C[v_{r-1},v_{s+1}]$, it suffices to consider the case $r=1$.   However, if $r=1$ then $v_1$ being an ascent forces $l_1=1$.  This implies $v_1$ appears in all chains preceding $C$.  Therefore, $v_r$ can be removed from any skipped interval in which it appears and that interval will still be skipped, contradicting the fact that $C[v_r,v_s]$ is minimal.
\end{proof}

Notice that Lemma~\ref{ascent} implies that all MSIs of a chain $C$ consist entirely of descents.  Thus, only the lexicographically last chain in an interval can possibly be critical.  The next lemma covers the two basic cases of MSIs in a chain $C$.  We have already encountered the first case, while the second case is new.

\begin{Lem}
\label{inout} Suppose $w$ is not flat and $|w|\geq2$:\begin{enumerate}
\item There are two maximal chains in the interval $[i(w),w]$, and if $C$ is the second chain, then it has a unique MSI, $C(w,i(w))$.
\item If $o(w)\not< i(w)$, there are two maximal chains in the interval $[o(w),w]$, and if $C$ is the second chain, then it has a unique MSI, $C(w,o(w))$.
\end{enumerate}
\end{Lem}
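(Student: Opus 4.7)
The plan is to describe each interval explicitly and then read off the MSI of the second chain directly from the definition.

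For part~(1), since $w$ is not flat, Lemma~\ref{cover} says $w$ covers exactly two distinct words, the prefix $p = w(1)\ldots w(n-1)$ and the suffix $s = w(2)\ldots w(n)$; and each of $p$ and $s$ in turn covers $i(w) = w(2)\ldots w(n-1)$ (as the suffix of $p$ and the prefix of $s$, respectively). Thus $[i(w),w]$ has four elements arranged in a diamond, yielding exactly two maximal chains, and under the chain-id convention the one passing through $s$ (which begins by removing position~$1$) is lex-earlier than the one passing through $p$ (which begins by removing position~$n$). Let $C$ denote the latter. Its only nonempty open subinterval is $C(w,i(w)) = \{p\}$, and since $C - C(w,i(w)) = \{w,i(w)\}$ lies in the earlier chain, $C(w,i(w))$ is a skipped interval and is automatically the unique MSI of $C$.

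For part~(2), set $o = o(w)$ and $k = |o|$, so $o = w(1)\ldots w(k) = w(n-k+1)\ldots w(n)$; note $k \geq 1$, since otherwise $o = \varepsilon \leq i(w)$, contradicting the hypothesis. The hypothesis $o \not\leq i(w)$ means $o$ has no occurrence in $w$ strictly interior to positions~$1$ and~$n$, so the prefix occurrence at $[1,k]$ and the suffix occurrence at $[n-k+1,n]$ are its only occurrences. It follows that a factor $w(a)\ldots w(b)$ of $w$ contains $o$ if and only if it is a prefix of $w$ of length at least $k$ or a suffix of $w$ of length at least $k$. Applying Lemma~\ref{cover} at each step, every cover in $[o,w]$ deletes either the first or the last letter of the current word; but if a chain has removed at least one letter from each side, the current word has the form $w(a)\ldots w(b)$ with $a \geq 2$ and $b \leq n-1$, and by the occurrence analysis such a word cannot contain $o$. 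So the very first cover step commits the chain to deleting only from the left thereafter (the suffix chain $w \to w(2)\ldots w(n) \to \cdots \to o$) or only from the right thereafter (the prefix chain $w \to w(1)\ldots w(n-1) \to \cdots \to o$), yielding exactly two maximal chains. The prefix chain $C$ is lex-later, so it is the second chain.

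To verify that $C(w,o)$ is a skipped interval, observe that $C - C(w,o) = \{w,o\}$ and both of these belong to the earlier suffix chain. To see that it is the only skipped interval, and hence the unique MSI of $C$, I would argue that the suffix and prefix chains intersect only at $w$ and $o$: any other common element would yield a proper prefix of $w$ equal to a proper suffix of $w$ of length strictly greater than $k$ and strictly less than $n$, contradicting the maximality of $o$ as the outer word. Consequently every skipped interval of $C$ must contain every interior vertex of $C$, so it can only be $C(w,o)$ itself. The main obstacle is the structural claim in part~(2) that these two natural chains exhaust the maximal chains of $[o,w]$; this is exactly where the hypothesis $o \not\leq i(w)$ is indispensable, since it forbids any cover path that mixes left- and right-removal steps.
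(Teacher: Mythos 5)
Your proof is correct and for part~(2) it is essentially the same argument as the paper's (count occurrences of $o(w)$ to show there are exactly two maximal chains, then observe they meet only at $w$ and $o(w)$ so that $C(w,o(w))$ is forced to be the unique MSI). For part~(1) you depart from the paper: the paper simply invokes Lemma~\ref{descent} (a strong descent is an MSI), whereas you compute the diamond $\{i(w),\,p,\,s,\,w\}$ directly and observe that $C\setminus C(w,i(w))=\{w,i(w)\}$ lies in the earlier chain. Your approach is in fact slightly more robust: when $|w|=2$ the second chain of $[i(w),w]$ has chain id $2\text{-}1$, so $v_1$ is only a \emph{weak} descent and Lemma~\ref{descent} does not literally apply, while your direct argument still goes through.

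One small inaccuracy: in part~(2) you assert $k\geq 1$ ``since otherwise $o=\varepsilon\leq i(w)$, contradicting the hypothesis.'' The hypothesis is $o(w)\not< i(w)$, and when $|w|=2$ (and $w$ is not flat) we have $o(w)=i(w)=\varepsilon$, so $o(w)\leq i(w)$ without $o(w)< i(w)$; thus $k=0$ is possible. This does not harm the result, because $k=0$ forces $|w|=2$, and then $[o(w),w]=[i(w),w]$ is exactly the diamond you already treated in part~(1). But the sentence as written overstates what the hypothesis rules out; it would be cleaner to note that $k=0$ can occur only with $|w|=2$, in which case part~(1) applies verbatim. (Worth noting also: since $w$ is not flat one has $o(w)\neq i(w)$ whenever $|w|>2$, so in that range $o(w)\not< i(w)$ and $o(w)\not\leq i(w)$ are interchangeable, which is why your occasional use of $\not\leq$ in place of $\not<$ is harmless.)
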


\begin{proof}
The first case follows from Lemma~\ref{descent}.  For the second case,   once we reduce the first or last letter, there is only one copy of $o(w)$ left in $w$.  Therefore, there are two maximal chains in the interval $[o(w),w]$: the first chain, which ends at the suffix embedding, and the last chain, which ends at the prefix embedding.  Since these two chains share only $o(w)$ and $w$ in common, $C(w,o(w))$ is an MSI, completing the proof.
\end{proof}

We can illustrate this lemma using the intervals $[aa,baab]$ and $[b,baab]$.  Note the inner word of $baab$ is $aa$, so that the maximal chains of $[aa,baab]$ are $baab-aab-aa$ and $baab-baa-aa$, giving $C(baab,aa)$ as an MSI in the second chain.  The outer word of $baab$ is $b$, so that the maximal chains of $[b,baab]$ are $baab-aab-ab-b$ and $baab-baa-ba-b$, giving $C(baab,b)$ as an MSI in the second chain.

There is another way to think about these two types of MSIs that will prove useful moving forward.  In the first type, the embedding of $i(w)$ in the critical chain and first chain are the same when $i(w)$ is not flat.  In the second type, the embeddings of $o(w)$ in the critical chain and the first chain are different.  Notice that if both $i(w)$ and $o(w)$ are flat, then  $w$ is flat, $o(w)<i(w)$, and there is a unique maximal chain in every non-empty interval $[u,w]$.  Thus, there can be no overlap between these two types of MSI.  We will see this observation about same and different embeddings provides a very useful way of determining how MSIs arise, even though it does not apply to MSIs that end at flat words.

Proposition~\ref{outerword} generalizes Lemma~\ref{inout}(2), and the theorem that follows shows that we have identified all cases of MSIs.  It will be convenient to adopt the convention that a sequence $l_{i+1}$ consisting of a single label is not decreasing, corresponding to the fact that the interval $C(v_i,v_{i+1})$ is empty and so not an MSI.

\begin{Prop}
\label{outerword}
Let $C:w=v_0\stackrel{l_1}{\rightarrow}v_1\stackrel{l_2}{\rightarrow}\ldots\stackrel{l_{n-1}}{\rightarrow} v_{n-1}\stackrel{l_n}{\rightarrow}v_n=u$ be a maximal chain in the interval $[u,w]$. Suppose there are $i$ and $j$ such that $v_j=o(v_i)\not< i(v_i)$, and such that the sequence $l_{i+1},\ldots, l_j$ is decreasing.  Then $C(v_{i},v_{j})$ is an MSI in $C$.
\end{Prop}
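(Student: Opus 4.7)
The plan is to prove Proposition~\ref{outerword} in two stages: first that $C(v_i,v_j)$ is a skipped interval, then that it is minimal.

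For Stage 1, I would exploit the structure forced by the decreasing label sequence. Since $l_{i+1},\ldots,l_j$ is strictly decreasing with at least two terms, the very first step $v_i\to v_{i+1}$ must be a rightmost-letter removal — otherwise the leftmost position of the embedding would shift right and force the next label to exceed $l_{i+1}$. Iterating this observation, each intermediate word $v_k$ for $i\leq k\leq j-1$ equals the prefix of $v_i$ of length $|v_k|$, and if $p$ denotes the starting position of $v_i$'s embedding in $w$, then $l_{i+1} = p+|v_i|-1$. I then construct a lexicographically earlier chain $C'$ by agreeing with $C$ through $v_i$, performing $j-i$ successive leftmost-letter removals (which, since $v_j=o(v_i)$ is a proper suffix of $v_i$, arrive at the same abstract word $v_j$ via the suffix embedding), and finally traversing the abstract tail $v_j\to v_{j+1}\to\cdots\to v_n$ with whatever embeddings result. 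The first differing label is $l'_{i+1}=p<p+|v_i|-1=l_{i+1}$, so $C'<C$, and by construction $C-C(v_i,v_j)\subset C'$.

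For Stage 2, I would argue by contradiction. Assume some proper non-empty subinterval $(v_r,v_s)\subsetneq(v_i,v_j)$ is skipped by a chain $C''<C$. Since each rank of $[u,w]$ hosts exactly one vertex in any maximal chain, $C''$ contains $v_0,\ldots,v_r,v_s,\ldots,v_n$ at exactly those ranks; because chain ids are uniquely determined by the abstract vertex sequence (with the flat-word convention resolving any ambiguity), the first $r$ labels of $C''$ are forced to coincide with those of $C$. Let $t\in(r,s]$ be the smallest rank where $v''_t\neq v_t$. Then $v_{t-1}$, common to both chains, is non-flat and equal to the prefix of $v_i$ of length $|v_{t-1}|$, forcing $v''_t$ to be the other cover, namely the substring of $v_i$ starting at position $2$ of length $|v_{t-1}|-1$. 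For $C''$ to reach $v_s$ by rank $s$, the word $v_s$ must be a factor of $v''_t$, hence must appear at some position $b\geq 2$ of $v_i$ with $b+|v_s|-1\leq |v_{t-1}|$. Since $v_s$ is a prefix of $v_i$ whose own prefix of length $|v_j|$ is exactly $v_j=o(v_i)$, any occurrence of $v_s$ in $v_i$ entails an occurrence of $v_j$ at the same position. The hypothesis $o(v_i)\not\leq i(v_i)$ restricts such occurrences to positions $1$ and $j-i+1$: the former is excluded by $b\geq 2$, and the latter requires $|v_s|\leq|v_j|$, forcing $s=j$, after which the containment inside $v''_t$ forces $t\leq i+1$ and hence $r=i$, contradicting $(r,s)\neq(i,j)$.

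The hard part will be the bookkeeping in Stage 2, in particular (i) verifying that the chain id of $C''$ is forced to agree with $C$'s through rank $r$ — which rests on the fact that embeddings are recursively recovered from the abstract vertex sequence, with the flat-word convention settling any ambiguity — and (ii) leveraging $o(v_i)\not\leq i(v_i)$ to pin down that the only positions where $v_s$ (and hence its prefix $v_j$) may appear in $v_i$ are $1$ and $j-i+1$. A minor subtlety is that the decreasing sequence in $C$ may end with a forced leftmost removal at step $j$ when $v_{j-1}$ happens to be flat, but this affects only the final embedding of $v_j$ and leaves intact the prefix description of $v_{t-1}$ used in the argument for all $t-1<j-1$.
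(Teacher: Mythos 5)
Your proof is correct, but it takes a genuinely different route from the paper's. The paper proves this proposition by invoking Lemma~\ref{inout}(2), which states that when $o(v_i)\not<i(v_i)$ the interval $[o(v_i),v_i]$ has \emph{exactly two} maximal chains, intersecting only at the endpoints $v_i$ and $o(v_i)$. That lemma does double duty: it immediately supplies a lexicographically earlier chain $D$ in $[v_j,v_i]$ (so splicing $D$ into $C$ shows $C(v_i,v_j)$ is skipped), and it yields minimality in one stroke, because any maximal chain $C''<C$ skipping a proper subinterval $(v_r,v_s)\subsetneq(v_i,v_j)$ would restrict on $[v_j,v_i]$ to a maximal chain different from $C[v_i,v_j]$, hence to the unique \emph{other} chain, which meets $C[v_i,v_j]$ only at $v_i$ and $v_j$ — forcing $r=i$, $s=j$. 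Your Stage~1 is essentially a from-scratch reconstruction of the earlier chain (via the suffix embedding), and your Stage~2 replaces the ``only two chains'' shortcut with a direct positional case analysis of where a competing occurrence of $v_s$ (and hence of $v_j=o(v_i)$) could lie inside $v''_t$, using $o(v_i)\not\leq i(v_i)$ to rule out middle occurrences. The paper's argument is shorter and modular; yours is more self-contained and makes the role of the hypothesis $o(v_i)\not\leq i(v_i)$ explicit at the level of letter positions.

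Two small points worth tightening if you write this up. First, in Stage~2 you assert that $v_{t-1}$ is non-flat; this is true but deserves a sentence. Since $t<s\leq j$, we have $t-1\leq j-2$, so both $l_t$ and $l_{t+1}$ lie in the strictly decreasing block; but if $v_{t-1}$ (and hence $v_t$) were flat, the forced-leftmost convention would give $l_{t+1}=l_t+1$, a contradiction. Second, the claim that the first $r$ labels of $C''$ agree with those of $C$ is correct, and rests exactly on the fact you cite — that the embedding sequence, and hence the chain id, is uniquely recoverable top-down from the abstract vertex sequence, with the flat-word convention eliminating ambiguity — but it is worth stating that both chains share the trivial embedding at $v_0=w$ so that the induction has a common base.
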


\begin{proof}
Since the sequence $l_{i+1},\ldots, l_j$ is decreasing, $v_i$ can not be flat and $j\geq i+2$.  So Lemma~\ref{inout}(2) implies that $C(v_{i},v_{j})$ is an MSI in the subchain of $C$ that is its intersection with $[v_j,v_i]$.  So there is a lexicographically earlier maximal chain $D$ in $[v_j,v_i]$.  Thus, the chain $C'$ formed by replacing the subchain $C[v_i,v_j]$ in $C$ with $D$ yields $C-C(v_{i},v_{j})\subset C'$.  Therefore, $C(v_{i},v_{j})$ is a skipped interval of $C$. 

To see that it must also be an MSI, note that $o(v_i)\not< i(v_i)$ so that $[v_j,v_i]$ has only two maximal chains and they intersect only at $v_i$ and $v_j$.  So the same must be true of any maximal chain in $[u,w]$ containing $v_i$ and $v_j$.  This forces minimality.
\end{proof}

\begin{Thm}
\label{MSIchar}
The interval $C(v_i,v_j)$ is an MSI of a maximal chain $C$ of $[u,w]$ if and only if $C(v_i,v_j)=v_{i+1}$ and $v_{i+1}$ is a strong descent, or $v_j=o(v_i)\not<i(v_i)$, where the sequence $l_{i+1},\ldots, l_j$ is decreasing.
\end{Thm}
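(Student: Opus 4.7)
The sufficiency ($\Leftarrow$) is immediate: case~(a) is Lemma~\ref{descent}, and case~(b) is Proposition~\ref{outerword}. For the necessity ($\Rightarrow$), I will suppose $C(v_i,v_j)$ is an MSI and extract structural constraints. By Lemma~\ref{ascent} the MSI contains no ascent, and since labels within a chain are distinct positions, $l_{i+1}>l_{i+2}>\cdots>l_j$ strictly. By Lemma~\ref{descent}, any interior strong descent $v_k$ (with $i+1\leq k\leq j-1$) would produce $\{v_k\}$ as a strictly smaller MSI whenever $j>i+2$, contradicting minimality. So in the long case $j>i+2$ every interior descent is weak and the labels descend by exactly $1$. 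Moreover $l_{i+1}$ must equal $b_i$ (the last non-zero position of $\eta_{v_i}$): $v_i$ cannot be flat, else its unique cover would force $l_{i+1}=a_i$ and then $l_{i+2}>a_i$ violating Lemma~\ref{ascent}, and choosing $l_{i+1}=a_i$ would similarly preclude the next descent. Each reduction therefore strips the current last letter, so $v_{i+1},\ldots,v_{j-1}$ are successive prefixes of $v_i$ of decreasing lengths and $v_j$ is either a positive-length prefix of $v_i$ or the empty word.

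When $j=i+2$ the MSI is the singleton $\{v_{i+1}\}$. If $v_{i+1}$ is a strong descent, case~(a) holds. If $v_{i+1}$ is a weak descent, then for $\{v_{i+1}\}$ to be skipped the alternative cover $v'_{i+1}$ of $v_i$ (necessarily the suffix of $v_i$) must itself cover $v_{i+2}$; comparing $v_{i+2}$ against the two covers of that suffix yields two sub-cases: either $|v_i|=2$ with $v_{i+2}=\emptyset=o(v_i)=i(v_i)$, or $v_i$ has period~$2$ with $v_{i+2}=o(v_i)$ of length $|v_i|-2=|i(v_i)|$. In either sub-case $|o(v_i)|=|i(v_i)|$ forces $o(v_i)\not<i(v_i)$, and case~(b) applies with the decreasing labels already established.

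When $j>i+2$, I will show $v_j=o(v_i)\not<i(v_i)$ in stages. For $|v_j|\geq|o(v_i)|$: if $|v_j|<|o(v_i)|$, place $v_k=o(v_i)$ in the chain; since $o(v_i)$ is a suffix of $v_i$ and hence a factor of $v'_{i+1}$, the suffix-first alternative makes $C(v_i,v_k)$ a skipped interval strictly contained in $C(v_i,v_j)$, contradicting minimality. For $|v_j|\leq|o(v_i)|$: skipped-ness forces $v_j$ to be a factor of $v'_{i+1}$, and if $v_j$ is the terminal such factor it is an outer-word candidate of $v_i$ and the inequality follows. Otherwise $v_j$ occurs only internally in $v_i$, say at position $s+1$ with $s+|v_j|<|v_i|$; then the prefix $P$ of $v_i$ of length $s+|v_j|$ has $v_j$ as both prefix and suffix, and selecting the shortest such $P$ that appears as some $v_k$ in the chain (with $i<k<j$) yields $v_j=o(v_k)$ and, via the inherited periodicity, $o(v_k)\not<i(v_k)$; Proposition~\ref{outerword} then supplies $C(v_k,v_j)$ as a strictly smaller MSI, again a contradiction. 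These two inequalities force $v_j=o(v_i)$. Finally, if $o(v_i)<i(v_i)$ then $v_j=o(v_i)$ appears additionally inside $i(v_i)$, hence at a position in $v_i$ beyond its prefix and suffix occurrences; this extra occurrence supplies yet another alternative subchain in $[v_j,v_i]$ and hence a smaller skipped sub-interval of $C(v_i,v_j)$, contradicting minimality. I expect the internal-occurrence subcase of the $|v_j|\leq|o(v_i)|$ step to be the main obstacle: pinning down the correct intermediate prefix $P=v_k$ and extracting the inherited periodicity to secure both the outer-word identification and the non-comparability so Proposition~\ref{outerword} genuinely applies.
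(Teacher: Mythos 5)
Your proof is essentially correct and lands on the same core mechanism as the paper's, but it reaches that mechanism through a heavier case analysis. Both proofs establish (via Lemma~\ref{ascent} and then Lemma~\ref{descent}) that in the non-strong-descent case the labels decrease by one and $v_j$ is a prefix of $v_i$; both then identify an auxiliary word $v_k$ in the chain with $v_j = o(v_k)\not<i(v_k)$ and invoke Proposition~\ref{outerword} together with minimality to force $k=i$. Where you differ is in how $v_k$ is found: the paper takes $k$ to be the largest index for which $v_k$ contains exactly two embeddings of $v_j$, a single one-line choice that immediately yields ``$v_j$ is a suffix of $v_k$'' (since $v_{k+1}$ loses the second copy) and then both $v_j=o(v_k)$ and $o(v_k)\not<i(v_k)$ by copy-counting. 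You instead split off $j=i+2$ as a separate case, and in the long case you establish the two inequalities $|v_j|\ge|o(v_i)|$ and $|v_j|\le|o(v_i)|$ before taking the shortest prefix $P$ of $v_i$ ending in $v_j$ as your $v_k$. These two choices of $v_k$ in fact coincide (your $P$ is precisely the last word in the chain containing two embeddings of $v_j$), so the argument is the same underneath, but the paper's copy-counting formulation avoids your terminal/internal dichotomy, the separate short case, and the two-inequality sandwich.

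Regarding your flagged worry about the internal-occurrence subcase: it is unfounded. Minimality of $P$ directly gives $v_j=o(P)$ (a longer outer word of $P$ would produce a strictly shorter prefix of $v_i$ ending in $v_j$), and $o(P)\not<i(P)$ for the same reason (a copy of $v_j$ inside $i(P)$ would also produce a shorter such prefix). The decreasing labels $l_{k+1},\dots,l_j$ are inherited from the whole MSI, so Proposition~\ref{outerword} applies. The only detail worth pinning down is that $P$ cannot be flat (else $v_k$ would be a strong descent or cause an ascent inside the MSI, contradicting what you already established), which guarantees $|P|\ge|v_j|+2$ so that $C(v_k,v_j)$ is nonempty. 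One small credit to your version: your explicit sub-case $|v_i|=2$, $v_{i+2}=\emptyset$ handles the $v_j=\emptyset$ corner that the paper's ``unique chain ending at the prefix embedding'' phrasing passes over silently.
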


\begin{proof}
The reverse implication follows from Lemma~\ref{descent} and Proposition~\ref{outerword}.

Suppose $C(v_i,v_j)$ is an MSI in $C$.  Note by Lemma~\ref{ascent} the sequence $l_{i+1},\ldots,l_{j}$ is a decreasing sequence. If $v_{i+1}$ is a strong descent, then $v_{i+1}$ is an MSI by Lemma~\ref{descent}.  This implies $C(v_i,v_j)=v_{i+1}$.  If $v_{i+1}$ is not a strong descent then, by containment minimality, none of the descents are strong descents.  Also, our sequence is decreasing, so we conclude that $v_i=v_jw({l_j})\ldots w(l_{i+1})$.  Thus, $v_j$ is a prefix of the word $v_i$.  

Since $C(v_{i},v_{j})$ is an MSI of $C$ and $C[v_i,v_j]$ is the only maximal chain of $[v_{j},v_{i}]$ ending at the prefix embedding, there must be at least one more embedding of $v_j$ into $v_i$ .  Let $k$ be the largest index so that $v_k$ contains exactly two copies of $v_j$.  Then $v_{k+1}$ contains only one embedding of $v_j$, implying that $v_j$ is a suffix of $v_k$.  By the previous paragraph, $v_j$ is also a prefix of $v_k$.  Thus $o(v_k)=v_j$ because if a word longer than $v_j$ was $o(v_k)$, $v_{k+1}$ would have more than one copy of $v_j$. Similarly, $o(v_k)\not<i(v_k)$. So by Proposition~\ref{outerword}, $C(v_{k},v_{j})$ is an MSI of $C$.  Thus, by containment minimality, it must be the case that $k=i$.
\end{proof}

Theorem~\ref{MSIchar} completes the characterization of the MSIs in an interval $[u,w]$ in ordinary factor order.  Notice the definitions of the inner and outer word, which Bj\"orner used to state his formula, naturally arise when determining the MSIs.  Also, the inequality $u\leq o(w)\not\leq i(w)$ is forced upon us by the poset lexicographic ordering of the maximal chains.

We are now ready to prove Bj\"orner's formula using discrete Morse theory.  We have broken the proof up into several cases to make it easier to follow.

\begin{BT}
In ordinary factor order, if $u\leq w$ then
$$\mu(u,w)=\begin{cases}
\mu(u,o(w)) & \mbox{if } |w|-|u|>2 \mbox{ and } u\leq o(w)\not\leq i(w),\\
1 &\mbox{if } |w|-|u|=2\mbox{, } w \mbox{ is not flat, and } u=o(w) \mbox{ or } u=i(w),\\
(-1)^{|w|-|u|} &\mbox{if } |w|-|u|<2,\\
0 &\mbox{otherwise.}\end{cases}$$
\end{BT}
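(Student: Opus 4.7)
Since Theorem~\ref{chainmu} and Lemma~\ref{ascent} together imply that only the lexicographically last maximal chain $C$ of $[u,w]$ can be critical, the plan is to induct on $|w|-|u|$ and, in each branch of the formula, determine whether $C$ is critical and compute $d(C)$ using Theorem~\ref{MSIchar}. Then $\mu(u,w) = (-1)^{d(C)}$ when $C$ is critical, and $\mu(u,w)=0$ otherwise.

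The base cases $|w|-|u|\le 2$ are handled directly. For $|w|=|u|$, the identity $\mu(u,u)=1$ is the definition; for $|w|-|u|=1$, the open interval is empty, so the empty simplex is critical in dimension $-1$, yielding $\mu=-1$. For $|w|-|u|=2$ with $w$ flat, the flat-word convention produces the chain id $1,2$, an ascent, so by Lemma~\ref{ascent} no MSI covers $v_1$ and $\mu=0$. For $w$ non-flat, Lemma~\ref{inout} gives the singleton MSI $\{v_1\}$ precisely when $u\in\{o(w),i(w)\}$ (so $d(C)=0$ and $\mu=1$), while Theorem~\ref{MSIchar} rules out any MSI when $u$ is neither, yielding $\mu=0$.

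For the inductive step $|w|-|u|>2$, I split on whether $u\le o(w)\not\le i(w)$. In the main subcase, set $j=|w|-|o(w)|$; since $o(w)$ is a prefix of $w$ containing $u$, the lex-last chain $C$ strips the last letter $j$ times (labels $|w|,|w|-1,\ldots,|o(w)|+1$) to reach $o(w)$, and then continues as the lex-last chain $C'$ of $[u,o(w)]$. Using Theorem~\ref{MSIchar}, I catalog the MSIs of $C$: the top MSI $C(v_0,v_j)$ (guaranteed by Proposition~\ref{outerword} since $o(w)\not\le i(w)$); a ``staircase'' of further outer-word MSIs $C(v_i,v_{j+i})$ for small $i$ whenever the criterion holds at $v_i$; and the MSIs inherited from $C'$ with shifted labels. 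After the disjointification defining $J(C)$, the top MSI contributes one block and a ``bridge'' block $\{v_j\}=\{o(w)\}$ arises as a second extra contribution, produced either by a strong descent at $v_j$ (when $o(w)$ is flat, forcing a label jump) or by truncating a staircase MSI. This yields $\#J(C)=\#J(C')+2$, so $(-1)^{d(C)}=(-1)^{d(C')}$, and the inductive hypothesis (or a direct check in the base case $u=o(w)$) gives $\mu(u,w)=\mu(u,o(w))$. In the remaining sub-cases of $|w|-|u|>2$ (where $u\not\le o(w)$, $o(w)\le i(w)$, or $w$ is flat), either the top MSI fails to exist or an ascent arises, leaving $v_1$ uncovered in $J(C)$, so $\mu(u,w)=0$.

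The main technical obstacle is the identity $\#J(C)=\#J(C')+2$: the bridge MSI takes different forms depending on whether $o(w)$ is flat or on the outer-word structure at $v_1,\ldots,v_{j-1}$, and each subcase requires its own verification that the disjointification bookkeeping produces the claimed covering of $C$.
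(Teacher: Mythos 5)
Your proposal follows essentially the same route as the paper: reduce to the lexicographically last chain via Lemma~\ref{ascent}, use Theorem~\ref{MSIchar} to identify MSIs, and show that $\#J(C)=\#J(C')+2$ so that the contribution telescopes to $\mu(u,o(w))$. The handling of the base case $|w|-|u|=2$ differs slightly (you run it through the Morse machinery, the paper just counts elements of $(u,w)$ directly with the M\"obius recursion), but this is immaterial.

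The genuine gap is exactly the step you flag as ``the main technical obstacle'': you assert that a singleton block $\{o(w)\}$ appears in $J(C)$, but you never prove it. This is where the actual work of the paper's proof lives. The paper splits on whether $v_k=o(w)$ is a strong or a weak descent in $C$. The strong-descent case is immediate; the weak-descent case requires noticing that the next word $v_{k+1}$ has at least two embeddings in $w$, taking the largest $j$ such that $v_j$ contains two copies of $v_{k+1}$, checking that $o(v_j)=v_{k+1}\not\le i(v_j)$, invoking Proposition~\ref{outerword} to get the MSI $C(v_j,v_{k+1})$, and finally arguing that the disjointification procedure truncates it to $\{o(w)\}$. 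Your parenthetical ``produced ... by a strong descent at $v_j$ (when $o(w)$ is flat)'' also misstates the dichotomy: the relevant split is strong versus weak descent of $o(w)$ in $C$, not flatness of $o(w)$ (flatness of $o(w)$ is sufficient for a strong descent when $|o(w)|\geq 2$, but the converse is false --- e.g.\ if $u$ fails to sit inside the prefix $o(w)(1)\ldots o(w)(|o(w)|-1)$, the lex-last chain is forced to drop the first letter next, producing a large label jump even when $o(w)$ is not flat). Until this covering argument is carried out, the inductive step is not established.
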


\begin{proof}
Let $[u,w]$ be an interval in ordinary factor order.  Suppose first that $|w|-|u|<2$. Then $u=w$ or $|u|=|w|-1$.  By the definition of the M\"obius function, we have $\mu(u,w)=1$ in the first case and $\mu(u,w)=-1$ in the second case.  Thus, the formula for $\mu(u,w)$ holds when $|w|-|u|<2$.

Now suppose $|w|-|u|=2$.  Then by the M\"obius recursion $\mu(u,w)=0$ if there is one element in the interval $(u,w)$ and $\mu(u,w)=1$ when there are 2 elements in the interval $(u,w)$.  Since $w$ covers at most two elements, these are the only possibilities.  If $w$ is flat, then $\mu(u,w)=0$ since $(u,w)$ contains a single element. If $w$ is not flat and $u=i(w)$, then removing either the first or last letter of $w$ gives us an element in $(u,w)$, implying $\mu(u,w)=1$.  If $w$ is not flat and $u=o(w)$, then removing either the first two letters or last two letters of $w$ gives us $u$.  Thus, $(u,w)$ has 2 elements implying $\mu(u,w)=1$.  If the above cases do not hold, then $u$ is either a prefix or a suffix of $w$, but not both.  In these cases, $(u,w)$ has 1 element implying $\mu(u,w)=0$.  Thus, the formula for $\mu(u,w)$ holds when $|w|-|u|=2$.  

We now turn to the case $|w|-|u|>2$.  We will use Theorem~\ref{chainmu} to calculate $\mu(u,w)$ from the critical chains in $[u,w]$.  By Lemma~\ref{ascent}, the chain id of a critical chain must be decreasing. Since a strong descent is followed by an ascent unless it is the last element in a chain, all the descents must be weak descents except possibly the last one.  Also, the only maximal chain in $[u,w]$ that could be critical is the one that is lexicographically last. Call this chain $C$.

Suppose first that $u\leq o(w)\nleq i(w)$.  We need to show that $o(w)$ is an element in the chain $C$. Let $k=|w|-|o(w)|$.  Since $v_k=o(w)$ is not contained in $i(w)$, the word $o(w)w(k+1)$ cannot be flat even if $o(w)$ is flat.  This observation, along with the fact that $u\leq o(w)$, allows us to conclude that $|w|,|w|-1,\ldots,|w|-(k-1)$ is a valid beginning for the chain id of a maximal chain $D$. Notice that each of these entries is the largest possible entry that does not already appear in the sequence.  Thus, any chain whose chain id differs from the chain id of $D$ in the first $k$ entries is lexicographically earlier than $D$.  So in the chain $C$, $l_1=|w|$, $l_2=|w|-1$,$\ldots$, $l_k=|w|-(k-1)$, and $v_k=o(w)$.

If $u=o(w)\nleq i(w)$, the previous paragraph implies that the sequence $l_1,\ldots,l_k$ is decreasing.  Thus, Theorem~\ref{MSIchar} implies $C(w,o(w))$ is the only interval in $J(C)$.  So by Theorem~\ref{chainmu}, $\mu(u,w)=1$.  Of course, in this case $\mu(u,o(w))=\mu(u,u)=1$ as well, so the formula holds.

Next we consider $u<o(w)\nleq i(w)$.  Since $l_k$ was the largest possible entry remaining, $l_{k+1}<l_{k}$, implying that $o(w)$ is a descent. Let $C'$ be the restriction of $C$ to the interval $[u,o(w)]$. We will show that $\#J(C)=2+\#J(C')$, allowing us to apply Theorem~\ref{chainmu} to complete the case $u\leq o(w)\nleq i(w)$.  Since the sequence $l_1,\ldots,l_{k}$ is decreasing, Theorem~\ref{MSIchar} implies $C(w,o(w))$ is the first interval in $J(C)$.   We claim $o(w)$ is the second interval in $J(C)$.  If $o(w)$ is a strong descent, this follows immediately.  If $o(w)$ is a weak descent, $v_{k+1}=w(1)w(2)\ldots w(|w|-k-1)<o(w)$, implying there are at least two copies of $v_{k+1}$ contained in $w$.  Let $j$ be the the largest value such that $v_j$ contains two copies of $v_{k+1}$.  Since in this case $v_1,\ldots,v_{j}$ are weak descents, $v_{k+1}\nleq i(v_j)$ because the two copies of $v_{k+1}$ in $v_j$ must be the prefix and suffix embeddings.  Furthermore, $o(v_j)=v_{k+1}$ because the prefix with one additional letter, $o(w)$, appears only once in $v_j$.  Since the sequence $l_{j+1},\ldots,l_{k+1}$ is decreasing, Theorem~\ref{MSIchar} implies $C(v_j,v_{k+1})$ is a skipped interval in $C$.  By the process of constructing $J(C)$ from $I(C)$, $o(w)=C(v_j,v_{k+1})-C(v_0,v_k)$ is the second MSI in $J(C)$, proving the claim. Since $o(w)$ is an MSI consisting of one element, all the remaining intervals in $J(C)$ are contained in the interval $(u,o(w))$.   Therefore, $J(C)=J(C')\cup\{C(w,o(w)),o(w)\}$ and $\#J(C)=2+\#J(C')$.  So by Theorem~\ref{chainmu}, $\mu(u,w)=\mu(u,o(w))$, proving the formula for $|w|-|u|>2$ and $u\leq o(w)\nleq i(w)$.

It remains to consider what happens when $|w|-|u|>2$ and $u\leq o(w)\nleq i(w)$ does not hold.  To show $\mu(u,w)=0$, we proceed by contradiction.  If $\mu(u,w)\neq0$ then, by Theorem~\ref{chainmu}, $J(C)$ must cover $C$. This implies that $J_1=C(v_0,v_j)$ is an MSI for some $v_j$.  Recall that Theorem~\ref{MSIchar} gives two possibilities for MSIs.  If $J_1=v_1$ and $v_1$ is a strong descent, then since $|w|-|u|>2$, $v_2$ is an ascent. This contradicts the fact that $C$ has a decreasing chain id.  Alternatively, we must have $v_j=o(w)\nleq i(w)$.  However, since $v_j\geq u$, $u\leq o(w)\nleq i(w)$, contradicting our assumption that this inequality does not hold. So $\mu(u,w)=0$, completing the proof. 
\end{proof}

By reproving Bj\"{o}rner's formula using this technique, it is easy to verify Bj\"orner's description of the homotopy type of a poset ordered by ordinary factor order.  The following result is a direct corollary of Lemma~\ref{ascent}.

\begin{Thm}
\label{homtype}
Let $[u,w]$ be an interval in $A^*$.  Then $\Delta(u,w)$ is homotopic to a sphere or is contractible
\end{Thm}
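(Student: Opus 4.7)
The plan is to observe that Theorem~\ref{homtype} follows almost immediately from Lemma~\ref{ascent} combined with Forman's main topological conclusion from discrete Morse theory, namely that a simplicial complex $K$ is homotopy equivalent to a CW-complex with exactly one cell of dimension $d$ for each critical simplex of dimension $d$.

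First I would argue that the interval $[u,w]$ admits at most one critical chain. By Lemma~\ref{ascent}, no ascent can belong to any MSI. Recall from Theorem~\ref{chainmu}(1) that $C$ is critical precisely when $J(C)$ covers $C$, in which case every element $v_i$ of the open interval $C(v_0,v_n)$ lies in some MSI. Hence, if $C$ is critical, its chain id $l_1 l_2 \cdots l_n$ contains no ascent, i.e., is strictly decreasing. Among the maximal chains of $[u,w]$ under the poset lexicographic order defined in Section~\ref{ofo}, there is at most one such chain: the lexicographically last one (when it exists and its chain id is strictly decreasing). So $[u,w]$ has either zero or exactly one critical chain, and the corresponding Morse matching on $\Delta(u,w)$ has at most one critical simplex of nonnegative dimension, in addition to the $(-1)$-dimensional empty simplex.

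Now I would apply Forman's theorem. If there is no critical chain, then $\Delta(u,w)$ is homotopy equivalent to a CW-complex consisting solely of the $(-1)$-dimensional cell, which is contractible. If there is one critical chain contributing a single critical simplex of dimension $d = \#J(C)-1 \geq 0$, then $\Delta(u,w)$ is homotopy equivalent to a CW-complex built from the empty cell and one $d$-cell; since the attaching map of the $d$-cell must send its boundary $S^{d-1}$ into the $(-1)$-skeleton, the resulting space is $S^d$. In either case, $\Delta(u,w)$ is contractible or homotopy equivalent to a sphere.

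The only step requiring care is the last one, translating the Babson--Hersh Morse matching into the precise homotopy type via Forman's CW-model. The combinatorial content, i.e., that at most one critical chain exists, is already provided by Lemma~\ref{ascent}; the rest is a standard consequence of discrete Morse theory, so no serious obstacle remains.
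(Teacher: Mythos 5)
Your proof is correct and follows essentially the same approach as the paper: use Lemma~\ref{ascent} to conclude that a critical chain must have a strictly decreasing chain id (hence at most one critical chain, the lexicographically last one), then invoke Forman's theorem to conclude the order complex is homotopy equivalent to a CW complex with at most one cell beyond the base cell, giving a sphere or a contractible space. One small slip in wording: Forman's CW model has a $0$-cell (not a $(-1)$-cell) in addition to the cells coming from critical simplices of nonnegative dimension, and the attaching map of the critical $d$-cell collapses its boundary to that $0$-cell; this is exactly how the paper phrases it, and the conclusion is unchanged.
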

\begin{proof}
By Forman's fundamental theorem of discrete Morse Theory~\cite{rF95}, a simplicial complex with a discrete Morse function is homotopy equivalent to a CW complex with exactly one cell of dimension $d$ for each critical simplex of dimension $d$ (as well as a dimension $0$ cell).  By Babson and Hersh's theorem for poset lexicographic orders (Theorem~\ref{chainmu} and~\cite{BH05}), $\Delta(u,w)$ has a discrete Morse function in which a maximal chain is critical (contributes a critical simplex) if and only if $J(C)$ covers $C$.

Thus, if $[u,w]$ has no critical chains, it is homotopy equivalent to a CW complex with only the $0$-cell.  By Lemma~\ref{ascent}, a critical chain must have a decreasing chain id, which means only the lexicographically last chain can be critical.  So there can be at most one critical chain.  This gives us a CW complex with a $0$-cell and one other cell, which by Theorem~\ref{chainmu} has dimension $\#J(C)-1$.  The unique way to attach this cell to the $0$ cell is through a map which is constant on the boundary, resulting in a sphere of dimension $\#J(C)-1$.
\end{proof}

As a final note on the homotopy type, notice that a critical chain contains at most one MSI caused by a strong descent in $J(C)$.  Thus, the dimension of the sphere grows larger as the number of recursive calls to the formula (because of outerwords) increases.  For example, if $A=\{a,b\}$, $u=a$, and $w$ is an alternating word of $a$'s and $b$'s with $|w|=n>2$, the sphere has dimension $n-3$.

\section{Generalized Factor Order on the Positive Integers}
\label{gfo}

Let $P$ be a partially ordered set.  Partially order $P^*$ using \emph{generalized factor order}, which is the partial order on $P^*$ defined by letting $u\leq w$ if $w$ contains a subsequence $w(i+1)w(i+2)\ldots w(i+n)$ such that $u(j)\leq w(i+j)$ in $P$ for $1\leq j\leq n=|u|$.  If $u\leq w$, we will say that $u$ is a \emph{factor} of $w$.  Although we now have two partial orders to work with, the use of inequalities will never be ambiguous because the partial order induced by generalized factor order on words of one letter is the same as the order in $P$.  Notice that when $P$ is an antichain, generalized factor order and ordinary factor order are the same partial order.

\bfig{t}
\begin{tikzpicture}
[every node/.style={ellipse,
minimum size=5mm},
value/.style={yshift=-6.1mm,xshift=4.2mm}]

\matrix[shape=rectangle,
draw=none,row sep=5mm,column sep=4mm] {
& & \node(1221){\tw{1221}}; & & \\
\node(221){\tw{221}}; & \node(1121){\tw{1121}};& &\node(1211){\tw{1211}}; & \node(122){\tw{122}};; \\
& & \node(121){\tw{121}};  & & \\
};
\node at (1221)[draw]{1221};
\node at (221)[draw]{221} edge (1221);
\node at (1121)[draw]{1121} edge (1221);
\node at (1211)[draw]{1211} edge (1221);
\node at (122)[draw]{122} edge (1221);
\node at (121)[draw]{121} edge (221) edge (1121) edge (1211) edge (122);

\node[value] at (121){\tbb{$1$}};
\node[value] at (221){\tbb{$-1$}};
\node[value] at (1121){\tbb{$-1$}};
\node[value] at (1211){\tbb{$-1$}};
\node[value] at (122){\tbb{$-1$}};
\node[value] at (1221){\tbb{$3$}};
\end{tikzpicture}

\caption{The interval $[121,1221]$. The M\"obius value $\mu(121,v)$ is given to the lower right of each word $v$.}
\label{fig:121,1221}
\efig

In this section, we will consider generalized factor order on the positive integers $\P$.  The Hasse diagram of the interval $[121,1221]$, ordered by generalized factor order on $\P^*$, is in Figure~\ref{fig:121,1221}.   To see that $121<1221$, let $w=1221$ and note that $1\leq w(1)$, $2\leq w(2)$, and $1\leq w(3)$.  In fact, $121$ appears twice in $1221$ because $1\leq w(2)$, $2\leq w(3)$, and $1\leq w(4)$.  This simple interval will useful in illustrating some of the ideas in this section.  The M\"obius values are listed for convenience - unlike ordinary factor order, we are able to have M\"obius values outside of the set $\{-1,0,1\}$.

Our definition of length for ordinary factor order on $A^*$ can be used in the context of generalized factor order on $\P^*$.  An expansion of a word $u$ is a word $\eta$ in $(\P\cup 0)^*$ satisfying $\eta=0^*u0^*$.  An \emph{embedding} of a word $u$ into $w$ is an expansion $\eta$ of $u$ satisfying $\eta\leq w$ for generalized factor order on $(\P\cup 0)^*$.  Thus, $0121$ and $1210$ are the embeddings of $121$ into $1221$.

A word is \emph{flat} in $\P^*$ if it is a sequence of $1$'s.  This is a natural refinement of the definition from ordinary factor order because in $\P$, $1$ is the unique minimal element. If we \emph{reduce} a letter $w(i)>1$, we are replacing it with $w(i)-1$.  To be precise, \emph{reducing} a letter $w(i)=1$ means removing it when considering words in $\P^*$, or replacing it with $0$ when considering embeddings in $(\P\cup 0)^*$.   We record the possible covering relations in $\P^*$ in a lemma for convenience - the proof is left to the reader.

\bfig{t}
\begin{tikzpicture}
[every node/.style={ellipse,
minimum size=5mm},
value/.style={yshift=-6mm,xshift=3mm}]

\matrix[shape=rectangle,
draw=none,row sep=5mm,column sep=4mm] {
& & \node(2212){\tw{2212}}; && \\
\node(1212){\tw{1212}}; &  & \node(2112){\tw{2112}}; &  & \node(2211){\tw{2211}}; \\
\node(212){\tw{212}}; & \node(1112){\tw{1112}}; & \node(1211){\tw{1211}}; & \node(2111){\tw{2111}}; & \node(221){\tw{221}}; \\
\node(112){\tw{112}}; & \node(211){\tw{211}}; &  & \node(121){\tw{121}}; & \node(22){\tw{22}}; \\
 & \node(12){\tw{12}}; &  & \node(21){\tw{21}}; &  \\
 &  & \node(2){\tw{2}}; &  &  \\
};
\node at (2212)[draw]{2212};

\node at (1212)[draw]{1212} edge (2212);
\node at (2112)[draw]{2112} edge (2212);
\node at (2211)[draw]{2211} edge (2212);

\node at (212)[draw]{212} edge (1212);
\node at (1112)[draw]{1112} edge (1212) edge (2112);
\node at (1211)[draw]{1211} edge (1212) edge (2211);
\node at (2111)[draw]{2111} edge (2112) edge (2211);
\node at (221)[draw]{221} edge (2211);

\node at (112)[draw]{112} edge (212) edge (1112);
\node at (211)[draw]{211} edge (212) edge (1211) edge (2111) edge (221);
\node at (121)[draw]{121} edge (1211) edge (221);
\node at (22)[draw]{22} edge (221);

\node at (12)[draw]{12} edge (112) edge (121) edge (22);
\node at (21)[draw]{21} edge (211) edge (121) edge (22);

\node at (2)[draw]{2} edge (12) edge (21);

\node[value] at (2){\tbb{$1$}};
\node[value] at (12){\tbb{$-1$}};
\node[value] at (21){\tbb{$-1$}};

\node[value] at (112){\tbb{$0$}};
\node[value] at (211){\tbb{$0$}};
\node[value] at (121){\tbb{$1$}};
\node[value] at (22){\tbb{$1$}};

\node[value] at (212){\tbb{$1$}};
\node[value] at (1112){\tbb{$0$}};
\node[value] at (1211){\tbb{$0$}};
\node[value] at (2111){\tbb{$0$}};
\node[value] at (221){\tbb{$-1$}};

\node[value] at (1212){\tbb{$-1$}};
\node[value] at (2112){\tbb{$1$}};
\node[value] at (2211){\tbb{$0$}};

\node[value] at (2212){\tbb{$-1$}};
\end{tikzpicture}

\caption{The interval $[2,2212]$. The M\"obius value $\mu(2,v)$ is given to the lower right of each word $v$.}
\label{fig:2,2212}
\efig

\begin{Lem}
\label{gcover}
A word $w=w(1)\ldots w(n)$ in $\P^*$ can cover up to $n$ words, each formed by reducing a letter in $w$ by $1$.  Reducing the letters $w(1)$ and $w(n)$ will always produce a factor, while reducing $w(i)$ for $1< i< n$ can only produce a new factor if $w(i)\geq2$. These words are distinct unless $w$ is flat, in which case $w$ only covers one word which is flat. \qed
\end{Lem}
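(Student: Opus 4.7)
The plan is to enumerate the covers of $w$ by cases on $|v|$ and then check distinctness. The key fact I use throughout is that $v \leq w$ in generalized factor order means some length-$|v|$ consecutive subsequence of $w$ dominates $v$ pointwise. From this I first rule out short covers: if $|v| \leq n-2$, extending the witnessing consecutive subsequence of $w$ by one additional letter produces an element $u$ with $v < u < w$, contradicting the cover relation. So every cover satisfies $|v| \in \{n, n-1\}$.

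When $|v| = n$, the unique length-$n$ subsequence of $w$ is $w$ itself, so $v(j) \leq w(j)$ for all $j$. If $v$ differed from $w$ at two distinct positions, or at a single position by more than $1$, one could increment $v$ at one position to obtain a strictly intermediate word. Hence $v$ is obtained by replacing a single $w(i)$ with $w(i)-1 \geq 1$; equivalently, reducing a letter with $w(i) \geq 2$ by $1$. Conversely, any such single reduction is a cover, since any intermediate $u$ necessarily has $|u| = n$ and thus satisfies $v(j) \leq u(j) \leq w(j)$ pointwise, forcing $u \in \{v, w\}$.

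When $|v| = n-1$, the two possible witnessing subsequences of $w$ are $w(1)\ldots w(n-1)$ and $w(2)\ldots w(n)$, and a length-$(n-1)$ intermediate argument forces $v$ to equal one of them; by symmetry I focus on $v = w(1)\ldots w(n-1)$, corresponding to removing $w(n)$. I would then check when an intermediate length-$n$ word $u$ with $v < u < w$ exists by splitting on how $v$ embeds into $u$. The prefix embedding $v = u(1)\ldots u(n-1)$ forces $u(j) = w(j)$ for $j < n$ and $1 \leq u(n) < w(n)$, which has a solution iff $w(n) \geq 2$. The suffix embedding $v = u(2)\ldots u(n)$ forces $w(j) \leq u(j+1) \leq w(j+1)$, which requires $w$ to be weakly increasing; combined with $w(n) = 1$ this forces $w$ flat, so when $w$ is non-flat this case yields no intermediate. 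Hence $v = w(1)\ldots w(n-1)$ is a cover iff $w(n) = 1$, and a mirror argument handles $v = w(2)\ldots w(n)$.

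Finally, distinctness is straightforward: two in-place reductions at distinct positions disagree in those positions, a reduction and a removal differ in length, and the two removals $w(1)\ldots w(n-1)$ and $w(2)\ldots w(n)$ coincide iff $w(i) = w(i+1)$ for every $i$, equivalently $w$ is flat; in the flat case all $n$ reductions collapse to $1^{n-1}$. The main obstacle is the bookkeeping in the suffix-embedding case of the length-$(n-1)$ analysis, where I must carefully verify that the constraint system admits no valid solution distinct from $w$ whenever $w(n) = 1$ and $w$ is not flat, and that this collapse is what rules out middle-letter removals from producing new covers.
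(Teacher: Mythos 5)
The paper explicitly says ``the proof is left to the reader,'' so there is no author's proof to compare against; you are reconstructing the argument from scratch. Your enumeration-by-length approach is the natural one and your argument is essentially correct: ruling out $|v|\leq n-2$ by extending the witness window, handling $|v|=n$ via the pointwise direct-product-of-chains structure, and handling $|v|=n-1$ by identifying the only two candidate windows and deciding which are covers.

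There is one place the argument as written is incomplete. In the $|v|=n-1$ case, after fixing $v=w(1)\cdots w(n-1)$, you only look for intermediates $u$ with $|u|=n$ (``an intermediate length-$n$ word $u$''). But an intermediate could a priori have length $n-1$ as well, namely $u\leq w(2)\cdots w(n)$ with $u(j)\geq w(j)$ pointwise. This case yields the same constraint $w(j)\leq u(j)\leq w(j+1)$ and hence the same conclusion (weakly increasing, forcing flat when $w(n)=1$), so the result stands, but you need to say it. Relatedly, the phrasing ``by symmetry I focus on $v=w(1)\cdots w(n-1)$, corresponding to removing $w(n)$'' is misleading when $w(n)\geq 2$: in that case $w(1)\cdots w(n-1)$ is not a reduction of $w$ at all, it is simply a non-cover (which your prefix-embedding analysis shows); the identification with ``reducing $w(n)$'' is accurate only after you conclude $w(n)=1$. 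Finally, in the distinctness paragraph you write that the two removals coincide iff $w(i)=w(i+1)$ for every $i$, ``equivalently $w$ is flat'' -- constancy is not the same as flatness in $\P^*$, though this is harmless here because removals only occur when $w(1)=w(n)=1$, and a constant word ending in $1$ is indeed flat. It would be cleaner to say this explicitly.
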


It should be noted that whenever a word begins or ends with a sequence of $1$'s, we could produce the same factor by reducing any 1 from the sequence.  However, since such a word is always a prefix or suffix, to assure our embeddings respect generalized factor order on $(\P\cup 0)^*$, $1$'s can only be reduced if they are at the beginning or end of a word.  If a word is flat, our convention is that only the first $1$ can be reduced.  For ease, we will say the letter $w(i)$ is \emph{reducible} in the word $w$ if $w(i)>1$ or $w(i)=1$ and its position $i$ is consistent with the preceding discussion. Similarly, we say the letter $\eta(i)$ in the embedding of a word $v$ into $w$ is \emph{reducible} if $\eta(i)>1$, if $\eta(i)=1$ is the first non-zero letter, or if $v$ is not flat and $\eta(i)=1$ is the last non-zero letter.

For example, the word $1211$, found in the center of Figure~\ref{fig:2,2212}, has three reducible positions: $1$, $2$ and $4$.  Notice reducing position $2$ results in the word $1111$, which is not in the interval $[2,2212]$.  Also, in a flat word, such as $1111$, only the first position is reducible.  Similarly, if a flat word is given as an embedding, such as $0110$ in $2212$, then only the first non-zero position, in this case position 2, is reducible.

Let $[u,w]$ be an interval in $\P^*$.  Let $C:w=v_0\stackrel{l_1}{\rightarrow}v_1\stackrel{l_2}{\rightarrow}\ldots\stackrel{l_{n-1}}{\rightarrow} v_{n-1}\stackrel{l_n}{\rightarrow}v_n=u$ be a maximal chain in $[u,w]$, where the $l_i$ are defined by the corresponding sequence of embeddings $\eta_{v_i}$ in the sense that
$$\eta_{v_i}(l_i)=\eta_{v_{i-1}}(l_i)-1 \text{ and } \eta_{v_i}(j)=\eta_{v_{i-1}}(j) \text{ when } j\neq l_i.$$
This gives each maximal chain $C$ a chain id $l_1\ldots l_n$.  By lexicographically ordering these chain ids, we produce a poset lexicographic order on the maximal chains of $[u,w]$.  We will use this order to find the MSIs of the maximal chains.  Examples of this poset lexicographic order and MSIs are given in Tables~\ref{table:121,1221} and~\ref{table:2,2212}.

Suppose $|u|=|w|$.  Let $m_i=w(i)-u(i)$.  By Lemma~\ref{gcover}, every permutation of the multiset $M_{uw}=\{1^{m_1},2^{m_2},\ldots\}$ is the chain id for a maximal chain in $[u,w]$.  Since there is a single embedding of $u$ into $w$, these permutations account for every maximal chain in $[u,w]$.  This implies the same-length case is really a direct product of chains.  If $[n]$ is the poset consisting of the integers $1,\ldots, n$, partially ordered by size, a direct product of chains is the well-known poset defined by $P=[n_1]\times [n_2]\times \ldots \times [n_m]$, in which $(i_1,i_2,\ldots,i_m)\leq (j_1,j_2,\ldots,j_m)$ if $i_k\leq j_k$ for all $1\leq k\leq m$.  We record some relevant results about this poset here for later use.  Recall that the \emph{rank function} of any poset $P$ in which every maximal chain has the same length is a map $\rho$ from the elements of $P$ to the non-negative integers.  It is recursively defined by setting $\rho(x)=0$ if $x$ is minimal, and $\rho(y)=\rho(x)+1$ if $y$ covers $x$.

\btab{t}
\begin{longtable}[!h]{rcccccccl}
Chain Id & $v_0$ & $l_1$ & $v_1$  & $l_2$ & $v_2$ \\
1-2  & $1221$ & $1$  & $0221$ & $2$  & $0121$ \\
2[-]1 & $1221$ & $2$ & $[1121]$ & $1$ & $0121$ \\
3[-]4 & $1221$ & $3$ & $[1211]$ & $4$ & $1210$ \\
4[-]3 & $1221$ & $4$ & $[1220]$ & $3$ & $1210$ \\
\end{longtable}
\caption{The MSIs of the maximal chains of $[121,1221]$ in $\P^*$.}
\label{table:121,1221}
\etab
\vskip0\baselineskip

\begin{Prop}
\label{samelength}
If $[u,w]\subset\P^*$ with $|u|=|w|$ and $C:w=v_0\stackrel{l_1}{\rightarrow}v_1\stackrel{l_2}{\rightarrow}\ldots\stackrel{l_{n-1}}{\rightarrow} v_{n-1}\stackrel{l_n}{\rightarrow}v_n=u$ is a maximal chain in $[u,w]$, then each descent $v_i$ is an MSI of $C$ and this accounts for all MSIs of $C$.
\end{Prop}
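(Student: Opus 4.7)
The plan is to prove two directions: every descent of $C$ gives a singleton MSI, and conversely every MSI is such a singleton. For the forward direction, suppose $v_i$ is a descent, so $l_i>l_{i+1}$. The crucial point is that when $|u|=|w|$, every cover in $[u,w]$ is realized by reducing one coordinate by $1$, and reductions at distinct coordinates commute. In particular, position $l_{i+1}$ is reducible in $v_{i-1}$ because $v_{i-1}(l_{i+1})=v_i(l_{i+1})\geq u(l_{i+1})+1\geq 2$. So one can form a new maximal chain $C'$ by replacing the subchain $v_{i-1}\stackrel{l_i}{\to}v_i\stackrel{l_{i+1}}{\to}v_{i+1}$ with $v_{i-1}\stackrel{l_{i+1}}{\to}v'_i\stackrel{l_i}{\to}v_{i+1}$. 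Since $l_{i+1}<l_i$, the chain id of $C'$ is lex-smaller, and $C-\{v_i\}\subset C'$, so the single-element interval $C(v_{i-1},v_{i+1})=\{v_i\}$ is a skipped interval, and automatically minimal.

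For the reverse direction, let $C(v_i,v_j)$ be any MSI, and pick a witness $C'<C$ with $C-C(v_i,v_j)\subset C'$. The portion of $C'$ from $v_i$ to $v_j$ is a saturated chain in the interval $[v_j,v_i]$, and since $|u|=|w|$ this is a product of chains, so the subchain $C'[v_i,v_j]$ uses exactly the multiset of labels $\{l_{i+1},\ldots,l_j\}$ appearing in $C[v_i,v_j]$, in some order. Because $C'<C$ diverges from $C$ precisely at index $i$, the first label of $C'[v_i,v_j]$ is strictly less than $l_{i+1}$, so there exists $k\in[i+2,j]$ with $l_k<l_{i+1}$. Take the smallest such $k$; then $l_{k-1}\geq l_{i+1}>l_k$, making $v_{k-1}$ a descent of $C$. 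By the forward direction, $\{v_{k-1}\}=C(v_{k-2},v_k)$ is a skipped interval of $C$ contained in $C(v_i,v_j)$. Minimality of the MSI forces $k-2=i$ and $k=j$, so $C(v_i,v_j)=\{v_{i+1}\}$ and $v_{i+1}$ is a descent.

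The main technical worry is whether the swap in the forward direction genuinely produces a valid chain in $\mathbb{P}^*$ under the reducibility conventions, since reducing a $1$ requires the flat-word or endpoint conditions. The hypothesis $|u|=|w|$ precisely sidesteps this: every coordinate stays $\geq 1$ throughout the chain and every reducible position has value $\geq 2$, so no flat-word edge case ever arises and distinct-coordinate reductions commute unconditionally. Everything else is a clean argument about permutations of labels in a product of chains together with the definition of MSI.
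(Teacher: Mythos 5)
The paper does not actually supply a proof of this proposition; it relies on the observation made just beforehand that when $|u|=|w|$ the interval $[u,w]$ is a direct product of chains, for which the identification of minimally skipped intervals with descents under a lexicographic labeling is standard shellability folklore. Your argument fills that gap with a direct, self-contained verification, and it is correct. In the forward direction, the commutation of the two reductions is legitimate precisely because $|u|=|w|$ forces every letter of every word along $C$ to be at least $1$, so any letter being reduced is at least $2$ and the conventions governing when a $1$ may be reduced never constrain the swap, as you correctly observe. In the reverse direction, the same-multiset-of-labels observation correctly locates a descent inside the interval, and minimality then forces the interval to be a singleton. The one step you leave slightly implicit is the assertion that a witness $C'$ must diverge from $C$ precisely at index $i$: this is not automatic from the definition of a skipped interval, but it does follow, since if $C'$ contained $v_{i+1}$ and $j>i+2$ then $C(v_{i+1},v_j)$ would be a smaller nonempty skipped interval contradicting minimality, while if $j=i+2$ then $C$ and $C'$ are distinct maximal chains agreeing outside $(v_i,v_j)$, forcing $l'_{i+1}\neq l_{i+1}$. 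With that detail supplied, the proof is complete and gives a cleaner derivation than an appeal to product-of-chains shellability would.
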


\begin{Cor}
\label{musame}
Let $[u,w]\subset\P^*$ with $|u|=|w|$. Then $$\mu(u,w)=\begin{cases}
{(-1)^{\rho(u,w)}} & \mbox{if } w(i)-u(i)\leq1 \mbox{ for all } 1\leq i\leq|w|,\\
0 &\mbox{otherwise,}\end{cases}$$ where $\rho$ denotes the rank function in $\P^*$ and $\rho(u,w)=\rho(w)-\rho(u)$.
\end{Cor}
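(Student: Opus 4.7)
The plan is to derive this directly from Proposition~\ref{samelength} together with Theorem~\ref{chainmu}, so the proof should be short. First, I would translate the same-length setup into the language of multisets: when $|u|=|w|$, there is only one embedding of $u$ into $w$, and every maximal chain in $[u,w]$ corresponds to a permutation of the multiset $M_{uw}=\{1^{m_1},2^{m_2},\ldots\}$ where $m_i=w(i)-u(i)$, since each step of a maximal chain must reduce one position by one. In particular, every maximal chain has the same length $n=\sum_i m_i = \rho(u,w)$.

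Next, I would apply Proposition~\ref{samelength}: the MSIs of a maximal chain $C$ are precisely the singleton intervals $\{v_i\}$ where $v_i$ is a descent. Since these singleton MSIs are pairwise disjoint, $J(C)=I(C)$ with no truncation step needed. By Theorem~\ref{chainmu}(1), $C$ is critical iff $J(C)$ covers $C$, which here means that every interior vertex $v_1,\ldots,v_{n-1}$ is a descent; equivalently, the chain id $l_1\,l_2\cdots l_n$ is strictly decreasing.

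The key combinatorial observation is then: a strictly decreasing permutation of $M_{uw}$ exists iff every entry of $M_{uw}$ has multiplicity at most $1$, i.e.\ iff $w(i)-u(i)\leq 1$ for all $i$; and when it exists it is unique (sort the support of $M_{uw}$ in decreasing order). So in the first case of the corollary there is exactly one critical chain, while if some $w(i)-u(i)\geq 2$ there are no critical chains at all.

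Finally, I would compute the contribution of the unique critical chain when all $m_i\leq 1$. It has $n-1$ interior vertices, each a singleton descent MSI, so $\#J(C)=n-1$ and Theorem~\ref{chainmu}(2) gives $d(C)=n-2$. By Theorem~\ref{chainmu}(3), $\mu(u,w)=(-1)^{n-2}=(-1)^n=(-1)^{\rho(u,w)}$. Summing gives the stated formula, with $\mu(u,w)=0$ in the remaining case. There is no real obstacle here; the only thing to take care of is the small edge case $\rho(u,w)\leq 1$ (where the chain is empty or has no interior), which works out correctly since the empty set of MSIs vacuously covers an empty interval and $(-1)^{-1}=-1$ matches the expected sign.
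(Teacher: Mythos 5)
Your proof is correct, and it takes a genuinely different route from what the paper implicitly relies on. The paper itself offers no argument for Corollary~\ref{musame}: in the surrounding text it observes that when $|u|=|w|$ the interval $[u,w]$ is isomorphic to a direct product of chains $[m_1+1]\times[m_2+1]\times\cdots$, and then simply records the corollary as the well-known M\"obius formula for such a product (namely $\mu=\prod_k\mu(i_k,j_k)$, which is $(-1)^{\sum(j_k-i_k)}$ when all coordinate gaps are at most one and $0$ otherwise). You instead derive the same statement internally from the paper's discrete-Morse machinery: Proposition~\ref{samelength} identifies MSIs with descents, so the lone possible critical chain is the one with strictly decreasing chain id, which exists iff $M_{uw}$ is multiplicity-free, i.e.\ iff $w(i)-u(i)\le1$ for all $i$; counting $\#J(C)=n-1$ singleton intervals then gives $(-1)^{n-2}=(-1)^{\rho(u,w)}$ via Theorem~\ref{chainmu}. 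Both approaches are sound; yours has the virtue of showing the corollary is genuinely recoverable from the paper's own framework, and it foreshadows the uniqueness-of-the-decreasing-chain argument used later in Proposition~\ref{alldes}. One small imprecision worth tightening: your edge-case remark conflates $\rho(u,w)=0$ with $\rho(u,w)=1$. For $\rho(u,w)=1$ your reasoning is exactly right ($J(C)=\emptyset$ vacuously covers the empty open interval, $d(C)=-1$, giving $-1$). For $\rho(u,w)=0$, though, the Babson--Hersh sum cannot be invoked — there the order complex $\Delta(u,u)$ is void, and one must simply fall back on the definition $\mu(u,u)=1=(-1)^0$; Theorem~\ref{chainmu} implicitly assumes $u<w$.
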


\begin{Cor}
\label{msisame}
Let $[u,w]\subset\P^*$ and $C:w=v_0\stackrel{l_1}{\rightarrow}v_1\stackrel{l_2}{\rightarrow}\ldots\stackrel{l_{n-1}}{\rightarrow} v_{n-1}\stackrel{l_n}{\rightarrow}v_n=u$ be a maximal chain in $[u,w]$.  If $|v_i|=|v_j|$ and $v_k$ is a descent with $i<k<j$, then $v_k$ is an MSI in $C$.\qed
\end{Cor}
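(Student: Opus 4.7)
The plan is to reduce Corollary~\ref{msisame} directly to Proposition~\ref{samelength} applied to the sub-chain $C[v_i,v_j]$. The first step is to observe that length is non-increasing along any saturated chain in $\P^*$: by Lemma~\ref{gcover}, a cover either preserves length (when one reduces an interior letter $\geq 2$) or drops it by one (when one reduces a boundary $1$). Hence $|v_i|=|v_j|$ with $i<j$ forces $|v_i|=|v_{i+1}|=\cdots=|v_j|$, and so $C[v_i,v_j]$ is a maximal chain in the same-length interval $[v_j,v_i]$.

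Next I would view $C[v_i,v_j]$ as a maximal chain of $[v_j,v_i]$ equipped with its own poset lexicographic order. The labels $l_{i+1},\ldots,l_j$ inherited from $C$ record positions in the embeddings into $w$, but because length is constant across this segment, the non-zero positions of $\eta_{v_i}$ correspond bijectively and in an order-preserving way to the positions $1,\ldots,|v_i|$ of $v_i$. Under this identification the relation $l_k>l_{k+1}$ transfers intact, so $v_k$ is a descent of $C[v_i,v_j]$ in its own right. Proposition~\ref{samelength} then yields that $v_k$ is an MSI of the sub-chain, producing a lexicographically earlier maximal chain $D$ in $[v_j,v_i]$ with $C[v_i,v_j]\setminus\{v_k\}\subset D$.

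Finally I would lift the conclusion to the full interval. Splicing $D$ into $C$ in place of $C[v_i,v_j]$ produces a maximal chain $C'$ of $[u,w]$ satisfying $C\setminus\{v_k\}\subset C'$, and because the label re-indexing above is order-preserving, the inequality $D<C[v_i,v_j]$ in the lex order on $[v_j,v_i]$ lifts to $C'<C$ in the lex order on $[u,w]$. Thus $\{v_k\}$ is a skipped interval of $C$, and being a singleton it is automatically minimal, so $v_k$ is an MSI of $C$. The only point requiring care is the label-translation bookkeeping between the sub-interval and the ambient interval, but this is routine once one recognizes the order-isomorphism between the non-zero positions of $\eta_{v_i}$ and the positions of $v_i$ itself.
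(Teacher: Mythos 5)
Your proposal is correct and matches the intended (implicit) argument: the paper leaves the corollary with a \qed precisely because it is the combination of (i) length monotonicity along covers from Lemma~\ref{gcover}, (ii) Proposition~\ref{samelength} applied to the restricted chain $C[v_i,v_j]$ viewed in $[v_j,v_i]$, and (iii) the locality of the MSI property with respect to chain intervals, which the paper invokes explicitly later (in the proof of Theorem~\ref{gMSIchar}) via the statement that $C(v_i,v_j)$ is an MSI of $C$ if and only if it is an MSI of $C[v_i,v_j]$ in $[v_j,v_i]$. Your spelling out of the order-preserving relabelling between positions of $w$ and positions of $v_i$, and the resulting lex comparison $C'<C$ after splicing, is exactly the bookkeeping that makes step (iii) rigorous for the chain-id ordering used here.
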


Let $C:w=v_0\stackrel{l_1}{\rightarrow}v_1\stackrel{l_2}{\rightarrow} \ldots\stackrel{l_{n-1}}{\rightarrow} v_{n-1}\stackrel{l_n}{\rightarrow} v_n=u$ be an arbitrary maximal chain in an interval $[u,w]$ of $\P^*$.  We can determine precisely which sequences $l_1\ldots l_n$ correspond to chain ids by considering what conditions imply the letters $\eta_{v_{i}}(l_{i+1})$ are reducible.

Let $\eta$ be an embedding of $u$ into $w$.  Let $m_i=w(i)-\eta(i)$.  Let $f$ denote the position of the first non-zero letter in the embedding $\eta$, and $\ell$ denote the position of the last non-zero letter in the embedding $\eta$. We say a permutation of the multiset $M_{\eta}=\{1^{m_1},2^{m_2},\ldots\}$ is \emph{admissible} if the last $i$ appears before the last $i+1$ for all $1\leq i\leq f-2$ and the last $j$ before the last $j-1$ for all $\ell+2\leq j\leq |w|$.  An admissible permutation is \emph{strongly admissible} if the last $\ell+1$ appears before either one value from the set $\{f,f+1,\ldots, \ell\}$, or $2$ copies of a value from the set $\{1,\dots, f-1\}$.

\btab{t}
 I(C) for $[2,2212]$

\begin{longtable}[!h]{rcccccccccccccl}
Chain Id & $v_0$ & $l_1$ & $v_1$  & $l_2$ & $v_2$  & $l_3$ & $v_3$  & $l_4$ & $v_4$  & $l_5$ & $v_5$ \\
1-1-2-2-3  & $2212$ & $1$  & $1212$ & $1$  & $0212$ & $2$  & $0112$ & $2$  & $0012$ & $3$  & $0002$ \\
1-1-4[-4-]3 & $2212$ & $1$ & $1212$ & $1$ & $0212$ & $4$ & $[0211$ & $4$ & $0210]$ & $3$ & $0200$ \\
1-2[-]1-2-3 & $2212$ & $1$ & $1212$ & $2$ & $[1112]$ & $1$ & $0112$ & $2$ & $0012$ & $3$ & $0002$ \\
1-4[-]1-4-3 & $2212$ & $1$ & $1212$ & $4$ & $[1211]$ & $1$ & $0211$ & $4$ & $0210$ & $3$ & $0200$ \\
1-4-4[-]1-3 & $2212$ & $1$ & $1212$ & $4$ & $1211$ & $4$ & $[1210]$ & $1$ & $0210$ & $3$ & $0200$ \\
1-4[-4-]3[-]1 & $2212$ & $1$ & $1212$ & $4$ & $[1211$ & $4$ & $1210]$ & $3$ & $[1200]$ & $1$ & $0200$ \\
2[-]1-1-2-3 & $2212$ & $2$ & $[2112]$ & $1$ & $1112$ & $1$ & $0112$ & $2$ & $0012$ & $3$ & $0002$ \\
2[-4[-]4-3-]2 & $2212$ & $2$ & $[2112$ & $4$ & $[2111]$ & $4$ & $2110$ & $3$ & $2100]$ & $2$ & $2000$ \\
4[-]1-1-4-3 & $2212$ & $4$ & $[2211]$ & $1$ & $1211$ & $1$ & $0211$ & $4$ & $0210$ & $3$ & $0200$ \\
4[-]1-4[-]1-3 & $2212$ & $4$ & $[2211]$ & $1$ & $1211$ & $4$ & $[1210]$ & $1$ & $0210$ & $3$ & $0200$ \\
4[-]1-4-3[-]1 & $2212$ & $4$ & $[2211]$ & $1$ & $1211$ & $4$ & $1210$ & $3$ & $[1200]$ & $1$ & $0200$ \\
4[-]2[-]4-3-2 & $2212$ & $4$ & $[2211]$ & $2$ & $[2111]$ & $4$ & $2110$ & $3$ & $2100$ & $2$ & $2000$ \\
4-4[-]1-1-3 & $2212$ & $4$ & $2211$ & $4$ & $[2210]$ & $1$ & $1210$ & $1$ & $0210$ & $3$ & $0200$ \\
4-4[-]1-3[-]1 & $2212$ & $4$ & $2211$ & $4$ & $[2210]$ & $1$ & $1210$ & $3$ & $[1200]$ & $1$ & $0200$ \\
4-4[-]2[-]3-2 & $2212$ & $4$ & $2211$ & $4$ & $[2210]$ & $2$ & $[2110]$ & $3$ & $2100$ & $2$ & $2000$ \\
4-4-3[-]1-1 & $2212$ & $4$ & $2211$ & $4$ & $2210$ & $3$ & $[2200]$ & $1$ & $1200$ & $1$ & $0200$ \\
4-4-3[-]2[-]2 & $2212$ & $4$ & $2211$ & $4$ & $2210$ & $3$ & $[2200]$ & $2$ & $[2100]$ & $2$ & $2000$ \\
\end{longtable}
$J(C)$ intervals for  $[a,abbabb]$

The set $I(C)$ only changes for the chain $2-4-4-3-2$:

\begin{longtable}[!h]{rcccccccccccccl}
Chain Id & $v_0$ & $l_1$ & $v_1$  & $l_2$ & $v_2$  & $l_3$ & $v_3$  & $l_4$ & $v_4$  & $l_5$ & $v_5$ \\
2[-4-]4[-3-]2 & $2212$ & $2$ & $[2112$ & $4$ & $2111]$ & $4$ & $[2110$ & $3$ & $2100]$ & $2$ & $2000$ \\
\end{longtable}

\caption{The MSIs of the maximal chains of $[2,2212]$ in $\P^*$.}
\label{table:2,2212}
\etab

Examples of these definitions can be found in Table~\ref{table:2,2212}.  For an admissible permutation, we consider the embedding $\eta=0200$.  Here, $M_\eta=\{1^2,3^1,4^2\}$, and $f=\ell=2$.  So as long as the $3$ appears after the last $4$, as in the sequence $1-4-4-1-3$, we get an admissible permutation.  For a strongly admissible permutation, we consider the embedding $\eta=00110$ in $22122$.  Here, $M_\eta=\{1^2,2^2,4^1,5^2\}$, $f=3$, and $\ell=4$.  So we need the last $2$ to occur after the last $1$, and the last $5$ to appear before either the last $4$, two $1$'s, or two $2$'s.  Examples include  $2-4-5-5-1-1-2$ and $1-1-2-2-5-5-4$.

\begin{Prop}
\label{permchar}
Let $\eta$ be an embedding of $u$ into $w$, $m_i=w(i)-\eta(i)$, and $M_{\eta}=\{1^{m_1},2^{m_2},\ldots\}$.

If $u$ is not flat, then a sequence of numbers is the chain id for a maximal chain in $[u,w]$ ending at $\eta$ if and only if it is an admissible permutation of the multiset $M_{\eta}$.

If $u$ is flat, then a sequence of numbers is the chain id for a maximal chain in $[u,w]$ ending at $\eta$ if and only if it is a strongly admissible permutation of the multiset $M_{\eta}$.
\end{Prop}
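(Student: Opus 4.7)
The plan is to verify that every chain id from $w$ to $\eta$ must use position $i$ exactly $m_i$ times, so it is automatically a permutation of $M_\eta$, and then to determine exactly which permutations correspond to sequences in which every intermediate reduction is legal. Only the reductions taking $1\mapsto 0$ at a position $i<f$ or $i>\ell$ are delicate; these occur precisely at the \emph{last} occurrence of label $i$ in the chain id. All other reductions take a value $\geq 2$ to a value $\geq 1$ at a position that is automatically reducible, so they impose no constraint.

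I would next analyze the delicate reductions using the reducibility criterion from the preceding discussion. For a last reduction at a position $i<f$, the letter $1$ at position $i$ must be the first non-zero letter of the current embedding (it cannot be the last non-zero, since positions $f,\ldots,\ell$ remain non-zero throughout). This forces positions $1,\ldots,i-1$ to be fully reduced first, yielding last $1$ before last $2$ before $\cdots$ before last $f-1$, which is the left half of admissibility. Symmetrically, a last reduction at a position $i>\ell$ requires $i$ to be the last non-zero letter of a non-flat embedding, and the "last non-zero" half of that requirement produces the right half of admissibility.

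The remaining subtlety is the non-flatness condition at positions $i>\ell$. If $u$ is non-flat, then $\eta$ already contains an entry $\geq 2$ and every intermediate embedding dominates $\eta$ coordinatewise, so non-flatness holds automatically and admissibility alone characterizes chain ids. If $u$ is flat, non-flatness must be verified. The key observation is that among the last reductions at positions $>\ell$, the one at $\ell+1$ is processed latest: by the right half of admissibility, the last occurrences of $|w|,|w|-1,\ldots,\ell+1$ appear in that order. Since entries of intermediate embeddings are non-increasing in time, an entry $\geq 2$ just before the last $\ell+1$ reduction was also $\geq 2$ before every earlier last reduction at a position $>\ell$. Hence checking non-flatness only at the $\ell+1$ step suffices.

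Translating this into a condition on the permutation: just before the last $\ell+1$ reduction the non-zero entries are those at $f,\ldots,\ell$, those in $\{1,\ldots,f-1\}$ not yet fully reduced, and position $\ell+1$ itself with value $1$. The embedding is non-flat precisely when either some $j\in\{f,\ldots,\ell\}$ has its last occurrence after the last $\ell+1$ (a reduction at $j$ is still pending, so its current value is $\geq 2$), or some $j\in\{1,\ldots,f-1\}$ has at least two occurrences after the last $\ell+1$ (so two reductions at $j$ are still pending). These are exactly the two clauses of the strongly admissible condition. The main obstacle is this flat case: one must carefully track which positions can still contribute a letter $\geq 2$ at the critical moment and use the monotonicity argument above to reduce all of the separate non-flatness checks at positions $>\ell$ to a single check at $\ell+1$.
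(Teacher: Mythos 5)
Your proposal is correct and follows the same route as the paper: both reduce the claim to analyzing exactly which $1\mapsto0$ reductions at positions outside $[f,\ell]$ are permitted by the reducibility rule, with the two halves of admissibility coming from "first non-zero" (left of $f$) and "last non-zero" (right of $\ell$), and strong admissibility coming from the non-flatness check. You spell out, more carefully than the paper does, why the non-flatness check can be collapsed to the single moment of the last $\ell+1$ via monotonicity of the entries, which is a worthwhile clarification of a step the paper treats somewhat tersely.
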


\begin{proof}
For a maximal chain in the interval $[u,w]$ to finish at the embedding $\eta$, its chain id must be a permutation of $M_{\eta}$.  The sequence $l_1,\ldots,l_n$ is a chain id in $[u,w]$ if and only if every word in the corresponding sequence $v_1,\ldots v_n$ is a factor of $w$ that contains $u$ as a factor.

If $v_i$ is not flat, $1$'s can only be reduced at the beginning or end of the word.  Thus, when $u$ is not flat, each $v_i$ will be a factor of $w$ that contains $u$ in the embedding corresponding to $\eta$ if and only if the positions in $\eta$ smaller than $f$ are reduced to $0$ from left to right, and the positions greater than $\ell$ are reduced to $0$ from right to left.  The permutations in $M_{\eta}$ that satisfy this requirement are the admissible permutations.

If $v_i$ is flat, only the first position of the word can be reduced.  Thus, when $u$ is flat, each $v_i$ will be a factor of $w$ that contains $u$ in the embedding corresponding to $\eta$ if and only if the positions in $\eta$ smaller than $f$ are reduced to $0$ from left to right, the positions greater than $\ell$ are reduced to $0$ from right to left, and  all positions greater than $\ell$ are reduced to $0$ before the last $2$ in the word is reduced. The permutations in $M_{\eta}$ that satisfy this requirement are the strongly admissible permutations.  
\end{proof}

Now that we know which permutations of a chain id produce maximal chains, we are ready to consider the MSIs.  We begin by looking at the maximal chains in $[121,1221]$ and $[2,2212]$.  

In $[121,1221]$, whose maximal chains are in Table~\ref{table:121,1221}, descents $v_i$ satisfying $l_{i+1}=l_i-1$ are MSIs.  Also, there is an MSI containing an ascent, which is not possible in the ordinary factor order (antichain) case.

In $[2,2212]$, whose maximal chains are in Table~\ref{table:2,2212}, we see that every descent that does not remove two $1$'s from the back of a word consecutively is an MSI.  Furthermore, for every MSI $C(v_i,v_j)$ of length greater than $1$, the embedding of $v_j$ into $v_i$ is not found in any previous chain.  This is consistent with the antichain case, in which MSIs consist of descents satisfying $l_{i+1}<l_i-1$ or weakly decreasing sequences starting at $v$ and ending at $o(v)$.

Our first goal is to determine when a single descent creates an MSI.  Suppose $v_i$ is a descent of a chain $C$. Notice that whenever interchanging $l_{i+1}$ and $l_i$ in the label sequence of $C$ produces another maximal chain in $[u,w]$, the new chain is lexicographically earlier than $C$.  This implies $v_i$ is an MSI of $C$.  We will invoke this line of reasoning by saying ``$l_{i+1}$ and $l_i$ can be interchanged.''

To simplify the language, we will say a descent $v_i$ is a \emph{strong descent} if $v_{i-1}\neq v_{i+1}11$ and we will say $v_i$ is a \emph{weak descent} if $v_{i-1}= v_{i+1}11$.

\begin{Prop}
\label{gdescent}
Suppose $[u,w]\subset\P^*$ and $C:w=v_0\stackrel{l_1}{\rightarrow}v_1\stackrel{l_2}{\rightarrow}\ldots\stackrel{l_{n-1}}{\rightarrow} v_{n-1}\stackrel{l_n}{\rightarrow}v_n=u$ is a maximal chain in $[u,w]$. Suppose $v_i$ is a descent.  Then $v_i$ is a length 1 MSI if and only if $v_i$ is strong.
\end{Prop}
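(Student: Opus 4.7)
The plan is to recast the property ``$v_i$ is a length 1 MSI'' into ``there exists a maximal chain $C'<C$ in $[u,w]$ agreeing with $C$ at every rank except possibly rank $i$.'' Since a length 1 interval is automatically minimal, $\{v_i\}$ is an MSI if and only if it is a skipped interval, and the task reduces to constructing (or obstructing) such a $C'$ whose only element of rank $i$ is some $v_i'\neq v_i$ covered by $v_{i-1}$ and covering $v_{i+1}$.

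For the forward direction, assuming $v_i$ is a strong descent, I intend to build $C'$ by swapping the order of the two reductions $l_i$ and $l_{i+1}$: starting from $v_{i-1}$, first reduce position $l_{i+1}$ to reach a word $v_i'$, then reduce position $l_i$ to land at $v_{i+1}$. The crucial observation is that $v_{i-1}$ is non-flat for any descent (a flat $v_{i-1}$ propagates flatness forward and forces $l_i<l_{i+1}$ by the flat convention), so the canonical label of $v_{i-1}\to v_i'$ is exactly $l_{i+1}$. Hence $C'$ first differs from $C$ at position $i$ with $l_i'=l_{i+1}<l_i$, so $C'<C$ lexicographically, independent of any differences at later positions produced by the flat convention on $v_i'$. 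I would verify the swap is valid by a case analysis on whether $l_i$ and $l_{i+1}$ reduce letters of value $\geq 2$ or $=1$; the only delicate subcase is when both reduce $1$s, which splits according to whether we reduce ``last then first'' or ``last then last'' in the embedding $\eta_{v_{i-1}}$. The first of these is always a strong descent and allows the swap (the first non-zero position of $v_{i-1}$ is reducible), while the second, $(l_i,l_{i+1})=(\ell,\ell-1)$, leaves position $\ell-1$ interior in $v_{i-1}$, blocks the swap, and is exactly the weak descent $v_{i-1}=v_{i+1}11$.

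For the reverse direction I will prove the contrapositive: if $v_i$ is weak, then $v_i$ is the unique element of $\P^*$ that $v_{i-1}$ covers and that covers $v_{i+1}$, so any maximal chain containing $C\setminus\{v_i\}$ must reinstate $v_i$ and hence equal $C$. Writing $v_{i+1}=a_1\cdots a_m$ so that $v_{i-1}=a_1\cdots a_m 11$, I would enumerate the covers of $v_{i-1}$ using Lemma~\ref{gcover} and sort them by length: those of length $m+2$ (reducing an interior letter of value $\geq 2$) differ from $v_{i+1}$ in length by $2$, so cannot themselves cover $v_{i+1}$; the candidate of length $m+1$ obtained by reducing the first letter when $a_1=1$ has its only length $m$ covers equal to $a_3\cdots a_m 11$ and $a_2\cdots a_m 1$, and a direct letter-by-letter comparison with $v_{i+1}=1 a_2\cdots a_m$ forces flatness of $v_{i+1}$ in both cases, collapsing to the flat subcase where $v_{i-1}$ is itself flat and the convention makes $v_i$ the only cover; and the remaining candidate of length $m+1$ is $v_{i+1}1=v_i$ itself. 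Thus no alternative $v_i'$ exists.

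The main obstacle is the careful bookkeeping in the subcase where both labels reduce $1$s: one must identify the first and last non-zero positions $f,\ell$ of $\eta_{v_{i-1}}$, enumerate the descent configurations $(l_i,l_{i+1})\in\{(\ell,f),(\ell,\ell-1)\}$, and verify that the second is precisely the word-level equality $v_{i-1}=v_{i+1}11$. Once this correspondence is established, both the ``non-flat strong descent implies swap valid'' step and the ``weak descent implies unique middle element'' step become routine.
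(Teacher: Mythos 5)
Your proposal is correct and takes essentially the same approach as the paper: for the forward direction you construct a lexicographically earlier chain by reordering the two reductions at ranks $i$ and $i+1$, and for the reverse you show the weak descent $v_{i-1}=v_{i+1}11$ admits $v_i$ as the unique element of the open interval $(v_{i+1},v_{i-1})$. The paper organizes the forward case analysis by the length drop $|v_{i-1}|-|v_{i+1}|\in\{0,1,2\}$ (invoking Corollary~\ref{msisame} and Proposition~\ref{permchar}, and handling the flat-$v_{i+1}$ subcase via an explicit rerouted chain with shifted trailing labels), whereas you reorganize the same split by the values of the reduced letters and fold the label-shift issue into the observation that only the $i$-th label matters for the lexicographic comparison; these are equivalent.
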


\begin{proof}
We will prove this proposition by considering the difference in length between $v_{i-1}$ and $v_{i+1}$.

If $|v_{i-1}|=|v_{i+1}|$,  $v_i$ is an MSI by Corollary~\ref{msisame}.

Suppose $|v_{i-1}|=|v_{i+1}|+1$.  Then either $\eta_{v_{i-1}}(l_{i+1})=1$ and $l_{i+1}$ corresponds to the first letter of $v_{i-1}$ or $\eta_{v_{i-1}}(l_{i})=1$ and $l_i$ corresponds to the last letter of $v_{i-1}$.  In the first case, by Proposition~\ref{permchar}, $l_{i+1}$ and $l_i$ can be interchanged.  In the second case, when $v_{i+1}$ is not flat, Proposition~\ref{permchar} implies $l_{i+1}$ and $l_i$ can be interchanged. If $v_{i+1}$ is flat,  $\eta_{v_{i-1}}(l_{i+1})=2$ because otherwise $v_{i-1}$ would be flat and $l_i$ could not correspond to the last letter of $v_{i-1}$.  Therefore, if $v$ is a flat sequence of $1$'s with length $|v_{i-1}|$, then the chain $$D:w=v_0\stackrel{l_1}{\rightarrow}\ldots\stackrel{l_{i-1}}{\rightarrow} v_{i-1}\stackrel{l_{i+1}}{\rightarrow}v\stackrel{l_{i+2}}{\rightarrow}v_{i+1}\stackrel{l_{i+3}}{\rightarrow}\ldots\stackrel{l_{n}}{\rightarrow}v_{n-1}\stackrel{l_{n}+1}{\rightarrow}v_n=u$$
is a lexicographically earlier chain than $C$ in $[u,w]$.  Since $C-v_i\subset D$, $v_i$ is a skipped interval of $C$, implying it is an MSI of $C$.

Suppose $|v_{i-1}|=|v_{i+1}|+2$.  Since $v_i$ is a descent, our restrictions on reducing $1$'s imply $v_{i+1}$ is not flat and either $v_{i-1}= 1v_{i+1}1$ or $v_{i-1}= v_{i+1}11$.  In the first case, by Proposition~\ref{permchar}, $l_{i+1}$ and $l_i$ can be interchanged.  In the second case, $v_i$ cannot be a length $1$ MSI because there is a unique maximal chain in the interval $[v_{i+1},v_{i-1}].$
\end{proof}

In the proof of the above result, we showed that unless the word $v_{i+1}$ is flat, all descents $v_i$ which are MSIs have $v_{i+1}$ in the same embedding into $v_{i-1}$ as a previous chain.  So suppose $C(v_i,v_j)$ is an MSI of the chain $C$, and $D<C$ is a chain satisfying $C-C(v_i,v_j)=D-D(v_i,v_j)$.  Then if there is a chain $D<C$ satisfying $C-C(v_i,v_j)=D-D(v_i,v_j)$ such that embedding of $v_j$ into $v_i$ in $D$ is the same as $C$, we should have an MSI which is a strong descent.  But, if every chain $D<C$ satisfying $C-C(v_i,v_j)=D-D(v_i,v_j)$ has an embedding of $v_j$ into $v_i$ different than $C$, unless $v_j$ is flat, we should have a different type of MSI.  As in the antichain case, these are the only two types of MSIs in generalized factor order on $\P^*$.

To describe the second type of MSI, we need to develop the notion of a ``principal factor.''  If $|u|\leq|w|$ and $u(i)\leq w(i)$ for $1\leq i\leq|u|$, we call $u$ a {\emph{prefix}} of $w$.  A {\emph {suffix}} of $w$ is defined analogously.  If $|u|<|w|$, a prefix or suffix is \emph{proper}.  If $u$ is both a proper prefix and a proper suffix of $w$, we say it is an \emph{outer factor} of $w$.  To simplify the language, we will call an outer factor of $w$ not contained in a longer outer factor a \emph{maximal outer factor of $w$}.

Using these definitions, we see that $21$ is a prefix of $2212$, $11$ is a suffix of $2212$, and that $211$ and $111$ are maximal outer factors of $2212$.

From this point forward, if $u$ is a prefix of $w$, we will often be dealing with the corresponding embedding.  If this is the case, will abuse notation and write $u(i)=0$ in place of introducing $\eta$ and writing $\eta(i)=0$.  As an example, if $u=22$ and $w=2212$, we may write $w(3)>u(3)$, assuming the third position of $u$ is $0$.

Let $p$ be a maximal outer factor of $w$.  Suppose $p$ is not flat.  Then the \emph{principal index} $i$ of $p$ in $w$ is the smallest index such that $w(i)> p(i)$ and $w(i)$ is reducible.  We say $p$ is a \emph{principal factor} of $w$ if the word produced by reducing $w(i)$ by $1$ no longer contains $p$ as a suffix.  

Intuitively, the principal index $i$ of an outer factor $p$ is the first position of $w$ that can be reduced without removing the prefix embedding of $p$. The factor becomes a principal factor if reducing $i$ removes the suffix embedding of $p$ from $w$.  Thus, the principal index of a principal factor satisfies $i>1$.  Also, since $p$ is not flat, $w(i)>1$ because the letters in the suffix embedding of $p$ greater than $1$ necessarily occur later in $w$ than the corresponding letters in the prefix embedding of $p$.

For our first examples, we consider the principal factors in the intervals $[121,1221]$ and $[2,2212]$ given in Tables~\ref{table:121,1221} and~\ref{table:2,2212}.  Note $121$ is a prefix of $1221$, and its principal index is $3$.  Our results below will show the MSI in the chain $3-4$ results from the fact that $121$ is a principal factor of $1221$.  For the second example, the only principal factor of $2212$ is $211$, as the flat maximal outer factor $111$ is excluded from the definition.  As for the other words in the interval $[2,2212]$, $1212$ has $12$ as a principal factor, $2112$ has $2$ as a principal factor, $2211$ has $211$ as a principal factor, $212$ has $2$ as a principal factor, and $221$ has $21$ as a principal factor.  Note that when one considers the entire set of maximal chains of $[2,2212]$, each of these principal factors immediately follows exactly one MSI not caused by a strong descent.

Some additional examples may help to further clarify the definition. The principal factors of $12222$ are $1211$, $1212$, $1221$, and $1222$.  The principal factors of $33133$ are $3111$, $3112$, $3113$ and $33$.  The words $3121$ and $11211$ have no principal factors.

It is important to note that a principal factor has exactly two embeddings in $w$.  If there were a third embedding of a principal factor $p$, we could extend it by a sequence of $1$'s to create a suffix of $w$.  This new suffix would also be a prefix since $1$ is the minimum of $\P$, implying that $p$ would be contained in a longer outer factor.

Using principal factors, we can identify the second type of MSI in generalized factor order on $\P$.

\begin{Prop}
\label{gprincipal}
Suppose $u$ is a principal factor of $w$ with principal index $i$.  Let $C:w=v_0\stackrel{i}{\rightarrow}v_1\stackrel{l_2}{\rightarrow}\ldots\stackrel{l_{n-1}}{\rightarrow} v_{n-1}\stackrel{l_n}{\rightarrow}v_n=u$ be the lexicographically first chain in $[u,w]$ with $l_1=i$.  Then $C(u,w)$ is an MSI of $C$.
\end{Prop}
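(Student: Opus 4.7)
The proof has two parts: showing $C(u,w)$ is a skipped interval, and showing it is containment minimal.

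For the skipped-interval claim, first observe that the principal index $i$ must satisfy $i \geq 2$. Indeed, since $u$ is a proper suffix of $w$, its suffix embedding occupies positions $|w|-|u|+1,\ldots,|w|$, all of which are at least $2$; hence reducing $w(1)$ cannot affect the suffix embedding, contradicting the defining condition of the principal index were $i=1$. Since position $1$ is always reducible in $w$ and that reduction preserves $u$ as a (suffix-embedded) factor, the lex-first maximal chain $C'$ in $[u,w]$ begins with $l'_1=1<i=l_1$, so $C'<C$. As $C'$ is a maximal chain it contains both $w$ and $u$, giving $C-C(u,w)=\{w,u\}\subseteq C'$ and establishing that $C(u,w)$ is skipped.

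For containment minimality, suppose toward contradiction that $C(v_r,v_s)\subsetneq C(u,w)$ is a skipped interval, and let $C''<C$ be a maximal chain containing $C-C(v_r,v_s)$. The case $r>0$ is easy: $C''$ must contain the saturated sub-chain $v_0\to\cdots\to v_r$ of $C$, forcing its first $r$ labels to agree with $l_1,\ldots,l_r$; in particular $l''_1=l_1=i$, so the lex-first property of $C$ among chains with first label $i$ yields $C''\geq C$, a contradiction. This reduces the analysis to $r=0$ and $s<n$, where $C''<C$ forces $l''_1<i$ (since any $C''$ with $l''_1=i$ would be lex-later than $C$ by the same argument).

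The hard case is thus to rule out $l''_1<i$. By the minimality of the principal index, any reducible position $l<i$ of $w$ must satisfy $w(l)=u(l)$ in the prefix embedding (so $l\leq|u|$), so reducing $w(l''_1)$ drops $v''_1(l''_1)$ strictly below $u(l''_1)$ and destroys the prefix embedding of $u$ in $v''_1$. Since $u$ has exactly two embeddings in $w$ (prefix and suffix) and reductions can only destroy, never create, embeddings, every $v''_t$ for $t\geq 1$ retains $u$ only at the suffix embedding; thus positions $|w|-|u|+1,\ldots,|w|$ remain nonzero in $\eta''_{v''_t}$ and form the word-level last $|u|$ letters of $v''_t$, so $v''_t$ has $u$ as its last $|u|$ letters as a word. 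The main obstacle is to show, using the lex-first property of $C$ together with the fact that step $1$ of $C$ forces $\eta_{v_s}(i)<u(i-|w|+|u|)$ at the principal index $i$ (which lies in the suffix range), that no $v_s$ in $C$ with $0<s<n$ has $u$ as its word-level last $|u|$ letters. Once this is verified, $v''_t\neq v_s$ for every $t$, contradicting $v_s\in C''$ and completing the proof.
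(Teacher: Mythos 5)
Your proof establishes the skipped-interval claim correctly and dispatches the $r>0$ branch of minimality by a valid lexicographic argument, but you explicitly leave the decisive branch ($r=0$, $s<n$) as an unresolved ``main obstacle'' and stop there, so the proposal is not a complete proof. The claim you flag --- that no $v_s$ with $0<s<n$ has $u$ as its word-level last $|u|$ letters --- is indeed the crux, and it \emph{is} true and provable from the ingredients already on the table, but you must actually carry it out. Sketch: if $u$ embeds at the last $|u|$ positions of $v_s$, composing with $\eta_{v_s}$ yields an embedding of $u$ into $w$. Because positions $1,\ldots,|u|$ are never zeroed along $C$ while positions $>|u|$ are zeroed only from the right in the decreasing tail coming from Lemma~\ref{flc}, the support of $\eta_{v_s}$ is an initial segment $\{1,\ldots,m\}$; the composite embedding therefore sits at positions $m-|u|+1,\ldots,m$. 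Since a principal factor has exactly two embeddings in $w$, this must be the suffix ($m=|w|$, ruled out by $\eta_{v_s}(i)<u(i-|w|+|u|)$) or the prefix ($m=|u|$, i.e.\ $v_s=u$, ruled out by $s<n$); any other $m$ would be a forbidden third embedding. Without this argument, the proof does not close.

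For comparison, the paper's proof sidesteps the case split on $(r,s)$ entirely. It shows directly that for any lexicographically earlier maximal chain $C'$ the only words shared with $C$ are $w$ and $u$: by Lemma~\ref{flc} the chain $C$ contains $v_{n-1}=u1$, and since $v'_1$ contains $u$ only at the suffix embedding, $v'_1$ --- and hence every $v'_t$ with $t\geq1$ --- cannot contain $u1$ as a factor, whereas every $v_s$ with $1\leq s\leq n-1$ does. Your route and the paper's rest on the same two structural facts (the unique pair of embeddings of a principal factor, and the destruction of the suffix embedding at the principal index), but you inverted which side of the comparison carries the burden; either way works, but the step you marked as an obstacle must be supplied before the proof is finished.
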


\begin{proof}
From the definition of principal factor, we conclude the word $v_1$ contains only the prefix embedding of $u$.  By the definition of outer factor, $w$ contains two embeddings of $u$, the prefix embedding and the suffix embedding. So by Proposition~\ref{permchar}, there is a maximal chain in $[u,w]$ with a chain id beginning with $1$.  Since the principal index $i$ of $u$ is greater than $1$, there exist chains that are lexicographically earlier than $C$.

Let $C'$ be an arbitrary maximal chain that is lexicographically earlier than $C$.  Then $l'_1<i$ because $C$ is the lexicographically first chain in $[u,w]$ with $l_1=i$.  From the definition of principal index, we conclude that the word $v'_1$ in the chain $C'$ does not contain the prefix embedding of $u$.  So $v'_1$ must contain the suffix embedding of $u$.  Furthermore, since $C$ is the lexicographically first chain with the prefix embedding of $u$, we reduce the letters at the end last, implying $v_{n-1}=u1$.  Since $v_1'$ does not contain $u1$, the only words common to $C$ and $C'$ are $w$ and $u$.   Since $C'$ was an arbitrary maximal chain in $[u,w]$ with $l'_1<i$, and $C$ is the first maximal chain in $[u,w]$ with $l_1=i$, we conclude $C(u,w)$ is an MSI.
\end{proof}

Note that this proposition shows that since $121$ is a principal factor of $1221$, the chain in the interval $[121,1221]$ with chain id $3-4$ has $C(1221,121)$ as an MSI (see Table~\ref{table:121,1221}).  Also, since $211$ is a principal factor of $2212$, the chain in the interval $[2,2212]$ with chain id $2-4-4-3-2$ has $C(211,2212)$ as an MSI (see Table~\ref{table:2,2212}).  As a last example, since $21$ is a principal factor of $221$, the chain in the interval $[2,2212]$ with chain id $4-4-2-3-2$ has $C(21,221)$ as an MSI. 

To complete the characterization of the MSIs, we will need a precise description of the lexicographically first chain in an interval $[u,w]$ that contains an embedding $\eta$ of $u$.  The chain id of this chain, $C_{\eta}$, is the lexicographically first permutation of $M_{\eta}$ that is the chain id of a maximal chain.  Using Proposition~\ref{permchar}, we will describe the structure of $C_{\eta}$ when $u$ is not flat.  First, it reduces all the letters before the support of the embedding to $0$ in order from left to right.  Next, it reduces all the letters in the support of the embedding down to the corresponding $u$-value in order from left to right. In the third step, $C_{\eta}$ reduces all letters beyond the support of the embedding to $1$ from left to right.  Finally, once we reach the end of $w$,  all the $1$'s beyond the support of the embedding are reduced to $0$ from right to left.

For example, the first admissible chain of $[121,1221]$ ending at the prefix embedding has chain id $3-4$.  The first admissible chain of $[2,2212]$ ending at the prefix embedding has chain id $2-4-4-3-2$.

If $u$ is not flat, call $C_{\eta}$ the \emph{first admissible} chain ending at $\eta$.  Recall that a sequence is \emph{unimodal} if it consists of a weakly increasing sequence followed by a weakly decreasing sequence.  So a sequence $l_1\ldots l_n$ is unimodal if $l_1\leq\ldots\leq l_i\geq\ldots\geq l_n$ for some index $i$.  This discussion implies the following lemma.

\begin{Lem}
\label{flc}
Suppose $[u,w]\subset\P^*$ and $\eta$ is an embedding of $u$ into $w$.  Let $\ell$ be the index of the largest non-zero number in $\eta$.  If $u$ is not flat, $C_{\eta}$ has as its chain id the unique unimodal permutation of $M_{\eta}$ with decreasing suffix $|w|, |w|-1, \ldots, \ell+1$.\qed
\end{Lem}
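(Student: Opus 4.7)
The plan is to exhibit the chain id of $C_\eta$ explicitly from the informal description preceding the lemma, then match it to the unique unimodal permutation claimed. First I would write out the candidate chain id
$$\sigma = 1^{m_1}\, 2^{m_2}\, \cdots\, \ell^{m_\ell}\, (\ell+1)^{m_{\ell+1}-1}\, \cdots\, |w|^{m_{|w|}-1}\, |w|\, (|w|-1)\, \cdots\, (\ell+1),$$
where $m_i = w(i) - \eta(i)$. Summing multiplicities shows $\sigma$ is a permutation of $M_\eta$, and a direct check via Proposition~\ref{permchar} shows $\sigma$ is admissible: in the initial weakly increasing block, the last copy of label $i$ (for $1 \leq i \leq f-2$) precedes that of $i+1$; in the final decreasing tail, the last copy of label $j$ (for $\ell+2 \leq j \leq |w|$) precedes that of $j-1$. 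Hence $\sigma$ is a chain id in $[u,w]$ ending at $\eta$.

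Next I would confirm $\sigma$ is lex-first among admissible permutations of $M_\eta$ by a greedy argument. Since admissibility constrains only the order of last occurrences, at each position one may place the smallest label whose remaining copies still admit a valid completion. Labels in $\{1, \ldots, \ell\}$ carry no such constraint and so go first in weakly increasing order. For labels in $\{\ell+1, \ldots, |w|\}$, the requirement that last~$j$ precede last~$j-1$ forces exactly one copy of each of $\ell+1, \ldots, |w|$ to occupy the final tail in decreasing label-order $|w|, \ldots, \ell+1$; lex-minimality then dictates that the remaining $m_j - 1$ copies appear as early as possible, in weakly increasing order. This produces exactly $\sigma$, so $\sigma$ is the chain id of $C_\eta$.

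Finally I would match $\sigma$ to the claimed unimodal form. By construction $\sigma$ is weakly increasing through position $\sum_{i=1}^{\ell} m_i + \sum_{i=\ell+1}^{|w|}(m_i-1)$, followed by the strictly decreasing suffix $|w|, |w|-1, \ldots, \ell+1$. For uniqueness, any unimodal permutation of $M_\eta$ ending in $|w|, |w|-1, \ldots, \ell+1$ must use exactly one copy of each of $\ell+1, \ldots, |w|$ in the tail; by unimodality the remaining entries form a fixed multiset and must appear in weakly increasing order, which is uniquely determined. The main obstacle is the lex-minimality argument: one must rigorously verify that advancing any copy of a label $j \geq \ell+1$ earlier, in place of $\sigma$'s entry there, either exhausts a copy needed for the final tail or otherwise destroys the required order of last occurrences. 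This case analysis is delicate because admissibility sees only last occurrences and not intermediate placements, so the argument must track exactly which copies are committed to the tail.
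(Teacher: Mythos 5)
Your proposal is correct and follows essentially the same approach as the paper: the paper offers no formal proof for this lemma (the \texttt{qed} is immediate because the preceding paragraph already describes the four-stage construction of $C_\eta$, from which the unimodal form with the stated decreasing suffix can be read off), and your argument simply formalizes that description using Proposition~\ref{permchar}.

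Two small notes. First, the claim that labels in $\{1,\ldots,\ell\}$ ``carry no constraint'' is slightly off: labels $1,\ldots,f-2$ do carry the admissibility constraint that last $i$ precedes last $i+1$; the relevant point is that this constraint is automatically satisfied by the lex-greedy placement in weakly increasing order, not that there is no constraint at all. Second, the lex-minimality step you flag as the ``main obstacle'' is actually routine. Suppose an admissible $\tau$ agrees with $\sigma$ through position $k-1$ and has $\tau(k)=j'<\sigma(k)$. In the regime where $\sigma(k)\leq\ell$, all copies of labels below $\sigma(k)$ are already exhausted by position $k-1$, so no such $j'$ exists. In the regime where $\sigma(k)\geq\ell+1$, necessarily $j'\geq\ell+1$; then since positions $1,\ldots,k-1$ (inherited from $\sigma$) contain exactly $m_{j'}-1$ copies of $j'$, placing another $j'$ at position $k$ makes last $j'=k$, while at most $m_{j'+1}-1$ copies of $j'+1$ lie in positions $1,\ldots,k$, forcing last $(j'+1)>k=$ last $j'$ and violating admissibility. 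So no ``commitment tracking'' subtlety actually arises; the contradiction is immediate from comparing copy counts in the shared prefix.
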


To make better use of the Lemma, we introduce the notation $1^m$ to represent a sequence of $m$ $1$'s.  For example, if considering the prefix embedding of $u$ into $w$, then $u1^m$, where $m=|w|-|u|$, is the word in $C_{u}$ which precedes the decreasing suffix $|w|, |w|-1, \ldots, \ell+1$.

The following Theorem completes the characterization of the MSIs.

\begin{Thm}
\label{gMSIchar}
Suppose $[u,w]\subset\P^*$ and $C:w=v_0\stackrel{l_1}{\rightarrow}v_1\stackrel{l_2}{\rightarrow}\ldots\stackrel{l_{n-1}}{\rightarrow} v_{n-1}\stackrel{l_n}{\rightarrow}v_n=u$ is a maximal chain in $[u,w]$.  Then $C(v_i,v_j)$ is an MSI of $C$ if and only if $C(v_i,v_j)$ consists of a single strong descent, or $v_{j}$ is a principal factor of $v_i$, $l_{i+1}$ is the principal index of $v_j$ with respect to the embedding $\eta_{v_j}$ of $v_j$ into $w$,  and $C[v_i,v_j]$ is the first admissible chain in $[v_j,v_i]$ ending at the prefix-embedding of $v_j$.
\end{Thm}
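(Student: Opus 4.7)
The plan is to prove both directions of the biconditional using a reduction principle together with the propositions already in hand. For the reduction, I would show that $C(v_a, v_b)$ is an MSI of $C$ in $[u,w]$ if and only if it is an MSI of the subchain $C[v_a, v_b]$ viewed in $[v_b, v_a]$. This holds because any lex-earlier chain $C' < C$ in $[u,w]$ with $C - C(v_a, v_b) \subset C'$ must agree with $C$ outside $(v_a, v_b)$ (the elements of $C - C(v_a, v_b)$ lie at fixed ranks), so it corresponds bijectively to a lex-earlier chain in $[v_b, v_a]$; minimality is preserved under this correspondence. Using this, the reverse direction is immediate: type (i) is Proposition \ref{gdescent}, and type (ii) follows by applying Proposition \ref{gprincipal} to the subchain $C[v_i, v_j]$ within $[v_j, v_i]$.

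For the forward direction, the reduction lets me assume $i = 0$ and $j = n$, so it suffices to characterize when $C(w, u)$ itself is an MSI of $C$ in $[u, w]$. When $n = 2$, the MSI is the single element $v_1$: if $v_1$ is a descent, Proposition \ref{gdescent} forces it to be strong (type i); if $v_1$ is an ascent, a lex-earlier chain $D$ must reach $u$ through a different $v_1'$, so some $l_1^D < l_1$ exists while $C$'s trajectory reaches $u$ via the prefix embedding, forcing $u$ to have both prefix and suffix embeddings in $w$ with $l_1$ as the principal index (type ii). When $n > 2$, the minimality of $C(w, u)$ together with Proposition \ref{gdescent} excludes strong descents inside the MSI, and Corollary \ref{msisame} eliminates hidden same-length descents; the structural claim is then that $C$ coincides with the first admissible chain in $[u, w]$ ending at the prefix embedding of $u$, which makes $u$ a principal factor of $w$ with $l_1$ its principal index.

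The main obstacle is establishing this structural claim in the $n > 2$ case. I would argue it by reverse induction along $C$: the trailing labels must form the decreasing consecutive sequence $|w|, |w|-1, \ldots, \ell + 1$ corresponding to erasing trailing $1$s (where $\ell$ is the last nonzero position of the prefix embedding of $u$), and the preceding labels must reduce positions in the support of that embedding in ascending order, matching exactly the unique unimodal template of Lemma \ref{flc}. Any deviation either introduces a proper skipped sub-interval (contradicting minimality) or permits a lex-earlier chain with identical endpoints (contradicting $C$'s role in skippedness), so the constraints from Proposition \ref{permchar}, the forced weakness of internal descents, and minimality collectively pin down the structure. Once the chain is identified as first admissible, $u$ being a principal factor of $w$ with principal index $l_1$ follows directly from how the first label must be chosen to preserve the prefix embedding of $u$ while destroying its suffix embedding.
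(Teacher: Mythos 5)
Your overall strategy — reduce to the case $i=0$, $j=n$; use Proposition~\ref{gdescent} and Corollary~\ref{msisame} to handle single-descent MSIs; show the remaining MSIs coincide with the first admissible chain ending at the prefix embedding of $u$ via Lemma~\ref{flc}; then conclude that $u$ is a principal factor — is the same route the paper takes, and the first few steps match closely. The structural-claim portion (trailing labels $|w|, |w|-1, \ldots, \ell+1$, earlier labels ascending, internal descents forced to be weak) is also how the paper pins down the chain.

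However, there is a genuine gap in the final step. You assert that ``once the chain is identified as first admissible, $u$ being a principal factor of $w$ with principal index $l_1$ follows directly.'' It does not. Being the first admissible chain ending at the prefix embedding is a property any interval $[u,w]$ with a prefix embedding possesses; it says nothing by itself about $u$ having a second (suffix) embedding in $w$, about that suffix embedding being destroyed by reducing $w(l_1)$, or about $u$ not lying inside a longer outer factor. All three of these are required by the definition of principal factor, and none of them follows from the unimodal structure of $C$ alone. Each must be extracted from the hypothesis that $C(w,u)$ is an MSI by an exchange-type contradiction: to get that $u$ is an outer factor, one observes that the lex-first chain $C'$ of $[u,w]$ cannot contain $v_{n-1}=u1$ (else $C - C(w,v_{n-1}) \subset C'$ would give a smaller skipped interval), forcing $C'$ to end at the suffix embedding and hence forcing that embedding to exist; to get maximality of $u$ as an outer factor, one supposes $u1^m$ were a longer outer factor, takes the lex-first chain $D$ of $[u1^m, w]$, glues $D[w,u1^m]\cup C[u1^m,u]$, and derives a smaller skipped interval; to get that reducing $w(l_1)$ kills the suffix embedding, one similarly glues $w$ onto the lex-first chain of $[u,v_1]$ and contradicts minimality. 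None of this is automatic from first-admissibility, and your proposal does not supply it. Relatedly, even the intermediate claim that $\eta$ must be the prefix embedding (rather than some other embedding) requires an argument — the paper rules out $l_1=1$ and $l_n=1$ separately — that your sketch does not make.
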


\begin{proof}
The reverse implication follows from Propositions~\ref{gdescent} and~\ref{gprincipal}.

Suppose $C(v_i,v_j)$ is an MSI of $C$.  By the definition of a poset lexicographic order, $C(v_i,v_j)$ is an MSI of $C$ if and only if it is an MSI of $C[v_i,v_j]$ in the interval $[v_j,v_i]$. Thus, it suffices to consider the case $w=v_i$ and $u=v_j$. Corollary~\ref{msisame} implies that when $|u|=|w|$, any MSI consists of a single descent.  Proposition~\ref{gdescent} states that if $C(w,u)$ is an MSI and it consists of a single descent, then $w\neq u11$.  

To finish the proof, it suffices to consider the case when $|u|<|w|$ and $C(w,u)$ is an MSI of $C$ that does not consist of a strong descent.  Our first goals are to establish that $w$ has a prefix embedding of $u$ and   $C[w,u]$ is the first admissible chain ending at the prefix embedding. Let $\eta$ be the embedding of $u$ into $w$ at the end of the chain $C$.  Note that any descent $v_k$ in $C$ is a weak descent satisfying $v_{k-1}=v_{k}1=v_{k+1}11$ since otherwise, by Proposition~\ref{gdescent}, $C(v_{k-1},v_{k+1})$ would be an MSI, contradicting the minimality of $C(w,u)$.  If $k+1\neq n$, this forces $l_{k+2}<l_{k+1}=l_k-1$,  implying that $v_{k+1}$ is also a weak descent.  By continually applying this idea, we find that any descents contained in $C(w,u)$ occur in a single sequence of weak descents at the end of this interval, and the corresponding labels form a decreasing sequence of consecutive numbers.  To see $\eta$ is the prefix embedding of $u$ into $w$, suppose for a contradiction that the set $M_{\eta}$ contains a $1$.  Since any descents occur in a sequence at the end of the interval, it follows that $l_1=1$ or $l_n=1$.  If $l_n=1$, then $v_{n-1}$ is descent satisfying $u11=v_{n-2}$, implying $u$ is the empty word.  However, this contradicts Proposition~\ref{permchar} because $v_{n-1}$ is obtained by trimming a $1$ from the back of a flat word.  Suppose $l_1=1$.  Then every chain $C'$ lexicographically earlier than $C$ has $l'_1=1$ and thus contains $v_1$, contradicting the fact that $C(w,u)$ is an MSI.  Therefore, $\eta$ is the prefix embedding of $u$ into $w$, allowing us to write $\eta=u$.  Since $|u|<|w|$, we must reduce position $|w|$ by the end of the chain. Note that the label $|w|$ can only be followed by another $|w|$ or the sequence of labels $|w|-1,\ldots,|u|+1$, which leads to the sequence of weak descents. It follows that $C$ must contain the word $u1^m$, where $m=|w|-|u|$.  This implies $u$ is not flat.  So by Lemma~\ref{flc}, $C$ is the first admissible permutation of $M_{u}$.  Thus, we have shown $C$ is the first admissible chain ending at the prefix-embedding of $u$.  

Next, we will show $u$ is an outer factor of $w$ by showing it is a suffix of $w$.   Recall that $C$ is not the lexicographically first chain in $[u,w]$ because $C(w,u)$ is an MSI.  Let $C'$ be the lexicographically first chain in the interval $[u,w]$.  Lemma~\ref{flc} implies $C'$ contains the word $v_{n-1}=u1$ unless it ends at the suffix embedding of $u$ into $w$.  However, $C'$ cannot contain $v_{n-1}$ as otherwise $C-C(w,v_{n-1})\subset C'$, contradicting the assumption that $C(w,u)$ is an MSI.  Therefore, $C'$ must end at the suffix embedding of $u$ into $w$.  Thus, $u$ is an outer factor of $w$ which is not flat.

To establish that $u$ is a principal factor of $w$, it remains to show that $u$ is not contained in a longer outer factor, that $l_1$ satisfies the definition of a principal index, and that reducing $w(l_1)$ removes the suffix embedding.

Suppose $u$ were contained in a longer outer factor of $w$.  Then $u$ would be a prefix of this larger factor, which is a suffix of $w$.  Therefore, there exists an outer factor $v$ of $w$ such that $v=u1^m$, where $m=|v|-|u|$.  By Lemma~\ref{flc}, $C$ must contain this word.  Since $C$ is the first admissible chain ending at the prefix-embedding of $u$,  $C[w,v]$ is the first admissible chain ending at the prefix-embedding of $v$.  Let $D$ be the lexicographically first chain in the interval $[v,w]$.  Since $v$ contains $u$, it is not flat.  So the structure of $D$ is determined by Lemma~\ref{flc}, which implies $D$ ends at the embedding of $v$ into $w$ that is farthest to the right.  So $D$ ends at the suffix embedding of $v$ into $w$.  Let $\hat{C}=D[w,v]\cup C[v,u]$.  Then $\hat{C}$ is a maximal chain in $[u,w]$ lexicographically earlier than $C$. But $C-C(w,v)\subset \hat{C}$, contradicting the fact that $C(w,u)$ is an MSI.  So $u$ is not contained in a longer outer factor of $w$.

Lemma~\ref{flc} implies that $l_1$ is the first index such that $w(l_1)>u(l_1)$ and $w(l_1)$ is reducible.  Suppose $v_{1}$, the word produced by reducing $w(l_1)$ by 1, contains the suffix embedding of $u$.
Then $u$ is an outer factor of $v_1$ and $|v_1|=|w|$.  By Lemma~\ref{flc}, $C[v_{1},u]$ is the first admissible chain ending at the prefix-embedding of $u$.  Since $v_{1}$ also contains the suffix embedding of $u$, Lemma~\ref{flc} implies that $C[v_{1},u]$ is not the lexicographically first chain in $[u,v_{1}]$. Let $D$ be the lexicographically first chain in the interval $[u,v_{1}]$, and $\hat{C}=w\cup D$.  Then $\hat{C}$ is a maximal chain in $[u,w]$ lexicographically earlier than $C$. But $C-C(v_1,u)\subset \hat{C}$, contradicting the fact that $C(w,u)$ is an MSI.  Therefore, $v_{1}$ does not contain the suffix embedding of $u$, and we have established that $u$ is a principal factor of $w$ with principal index $l_1$, completing the proof.
\end{proof}

Theorem~\ref{gMSIchar} shows there are precisely two types of MSIs. We will refer to the first type by saying ``an MSI caused by a strong descent,'' or just by referring to a word $v_i$ as a strong descent. We will refer to the second type by saying ``an MSI caused by the principal factor $p_{v_i}$.''  This language implies that $p_{v_i}$ is a principal factor of the word $v_i$, $l_{i+1}$ is its principal index with respect to the embedding $\eta_{v_i}$ of $v_i$ into $w$, the interval $C(v_i,p_{v_i})$ is an MSI in the related chain $C$, and  $C[v_i,p_{v_i}]$ is the first admissible chain in $[p_{v_i},v_i]$ ending at the prefix embedding of $p_{v_i}$.

Our next goal is to determine precisely which maximal chains are critical chains in an interval in $\P^*$.  Our first objective is to consider those critical chains that consist entirely of strong descents.

\begin{Prop}
\label{alldes}
Suppose $[u,w]\subset\P^*$, $(u,w)$ is non-empty, and $w$ is not flat.  Suppose $\eta$ is an embedding of $u$ into $w$.  Then there is a critical chain $C$ in $[u,w]$ ending at $\eta$ that consists entirely of (strong) descents if and only if $w(i)-\eta(i)\leq 1$ for all $i$, $\eta(2)\neq0$, and $\eta(|w|-1)\neq0$.  Furthermore, these conditions imply $|w|-|u|\leq2$ and that $[u,w]$ has at most two critical chains consisting entirely of (strong) descents.
\end{Prop}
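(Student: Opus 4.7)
The plan is to prove the biconditional in two directions and then address the ``Furthermore'' counting statements.

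For the forward direction, suppose a critical chain $C:w=v_0\to v_1\to\cdots\to v_n=u$ ending at $\eta$ consisting entirely of strong descents exists. Every $v_i$ being a descent forces the chain id $l_1,\ldots,l_n$ to be strictly decreasing, so each label appears at most once in $M_\eta$, giving $w(i)-\eta(i)\le 1$ for every $i$. For the condition $\eta(|w|-1)\ne 0$, I argue by contradiction: if $\eta(|w|-1)=0$ and $\eta(|w|)=0$, then condition~1 forces $w(|w|-1)=w(|w|)=1$, so $|w|-1$ and $|w|$ are the two largest elements of $M_\eta$, and the first two reductions strip off consecutive trailing $1$'s, producing $v_0=v_2 11$ and making $v_1$ weak. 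If instead $\eta(|w|-1)=0$ while $\eta(|w|)\ne 0$, then $f=\ell=|w|$ with $|u|=1$, and the strictly decreasing chain id violates the admissibility requirement ``last $i$ before last $i+1$ for $1\le i\le f-2$'' from Proposition~\ref{permchar}. The condition $\eta(2)\ne 0$ follows by the symmetric argument applied to the last two steps of $C$.

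For the reverse direction, given conditions 1--3, take the strictly decreasing permutation of $M_\eta$ as the candidate chain id. Conditions~2 and~3 yield $f\le 2$ and $\ell\ge|w|-1$, so the admissibility ranges $[1,f-2]$ and $[\ell+2,|w|]$ are empty and admissibility in Proposition~\ref{permchar} holds vacuously. For flat $u$, strong admissibility is vacuous when $\ell=|w|$ and, when $\ell=|w|-1$, the additional requirement ``last $\ell+1$ before a label in $\{f,\ldots,\ell\}$'' holds because $w$ not flat forces some support position $i$ to have $w(i)=2$, so a label $i<|w|$ lies in $M_\eta$ and occurs after the leading label $|w|$ in the decreasing order. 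Thus the chain id corresponds to a genuine maximal chain $C$. Every descent in $C$ is moreover strong: a weak descent demands two consecutive reductions each removing a trailing $1$, but a trailing-$1$ removal can occur only at a position in $(\ell,|w|]$, and $\ell\ge|w|-1$ leaves this set with at most one element, precluding two consecutive such removals. By Proposition~\ref{gdescent}, every strong descent is a length-$1$ MSI, so $J(C)$ covers $C$ and $C$ is critical.

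For the ``Furthermore'' statements, $|w|-|u|$ equals the number of positions outside the support of $\eta$ with $m_i=1$, all of which lie in $[1,f-1]\cup(\ell,|w|]$; conditions~2 and~3 give that each of these two sets has at most one element, so $|w|-|u|\le 2$. A valid embedding $\eta$ is determined by its starting support index $f$, which must satisfy $f\le 2$ and $f\ge |w|-|u|$; this leaves at most two admissible values of $f$, each fixing a unique strictly decreasing chain id, so $[u,w]$ contains at most two critical chains consisting entirely of strong descents. The most technical step will be verifying strong admissibility in the flat-$u$ case, where the non-flatness of $w$ must be translated into the existence of a support label strictly less than $|w|$ that occurs after the initial reduction in the chain.
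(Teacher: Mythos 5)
The argument for condition~3, $\eta(|w|-1)\neq 0$, is actually handled more carefully here than in the paper: you explicitly split on $\eta(|w|)$, directly note that the first two reductions force $v_0 = v_2 11$ when $\eta(|w|)=0$ (contradicting the all-strong-descents hypothesis), and invoke admissibility when $\eta(|w|)\neq 0$ ($f=\ell=|w|$); the paper only writes out the $w=v_211$ case and then detours through Theorem~\ref{gMSIchar} and principal indices rather than contradicting the hypothesis directly. That part is a genuine improvement.

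However, dismissing condition~2 with ``by the symmetric argument applied to the last two steps of $C$'' is a real gap, because the symmetry is not step-for-step. The definition of weak descent ($v_{i-1}=v_{i+1}11$) is one-sided — it detects consecutive \emph{trailing} removals only — and in a strictly decreasing chain id it is the \emph{initial} labels $|w|,|w|-1,\ldots$ that strip trailing $1$'s, never the final ones. Concretely, in $[1,211]$ with $\eta=100$ (so $\eta(2)=0$, $\eta(1)\neq 0$, $m_1=1$), the only decreasing chain id is $3,2,1$; its last two steps give $v_2=2$ with $v_1=21\neq v_3\,11=111$, so $v_2$ is \emph{strong}, and the weak descent is $v_1=21$, caught by the \emph{first} two steps. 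Likewise, for $\eta(2)=0$ with $\eta(1)=0$ (i.e.\ $f\geq 3$), the obstruction is an admissibility failure at label $f-1$, the mirror of your Case~2, not of your Case~1. So the subcases cross over: $\eta(1)=0$ pairs with the admissibility argument and $\eta(1)\neq 0$ pairs with the weak-descent-at-$v_1$ argument. To close the gap, replace the appeal to symmetry with the explicit split: if $f\geq 3$, admissibility (``last $i$ before last $i+1$ for $i\leq f-2$'') fails for the decreasing id; if $\ell=1$ (equivalently $\eta(1)\neq 0$, $\eta(2)=0$), then $|w|,|w|-1$ are trailing labels and $v_0=v_2 11$ makes $v_1$ weak exactly as in your Case~1.
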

\begin{proof}
Suppose $C$ is a critical chain in $[u,w]$ that ends at $\eta$ and consists entirely of descents.  Then $l_1>l_2>\ldots>l_{n-1}>l_{n}$.  This implies $w(i)-\eta(i)\leq 1$ for all $i$ as each letter can be reduced at most once.  Since $1$'s may only be reduced at the beginning or end of a word, the decreasing label sequence also implies $\eta(2)\neq0$ since it is not possible to reduce position $1$ before position $2$.  To finish this implication, suppose for a contradiction that $\eta(|w|-1)=0$.  Since each $l_i$ is distinct and the sequence is decreasing, it follows that $w=v_211$ as the last letter must be reduced to zero before the second to last can be.  So by Theorem~\ref{gMSIchar}, the MSI containing $v_1$ is not caused by a descent and therefore must be caused by a principal factor $p(w)$.  Since the chain id is decreasing, the corresponding principal index $l_1$ must be $|w|$. By the definition of principal index, $w(|w|)>1$.  This contradicts $w=v_211$, implying $\eta(|w|-1)\neq0$.

Suppose $w(i)-\eta(i)\leq 1$ for all $i$, $\eta(2)\neq0$, and $\eta(|w|-1)\neq0$.  The first assumption implies each entry in $M_{\eta}$ is distinct.  If $u$ is not flat, then the decreasing and $\eta(2)\neq0$ conditions imply the two conditions in the definition of admissibility. If $u$ is flat, having $w(i)-\eta(i)\leq 1$ implies $w$ consists only of $1$'s and $2$'s.  So the decreasing permutation of $M_{\eta}$ is strongly admissible because once the last $2$ has been reduced to a $1$, at most an initial $1$ remains to be reduced since $\eta(2)\neq0$.  Therefore, Proposition~\ref{permchar} implies that the decreasing permutation of $M_{\eta}$ is the chain id of a maximal chain $C$ in $[u,w]$.  Since each entry in the chain id is distinct, $C$ consists entirely of descents.  Furthermore, since $\eta(|w|-1)\neq0$, $w\neq v_211$.  So no descent has the property that $v_{i-1}\neq v_{i+1}11$ as a decreasing chain id only permits the removal of consecutive $1$'s from the end of a word at the beginning of the chain.  Therefore, by Theorem~\ref{gMSIchar} each is a length 1 MSI in $C$, implying that $J(C)$ covers $C(w,u)$.  Thus, $C$ is a critical chain consisting entirely of descents and ending at $\eta$.

To see each interval $[u,w]$ has at most two critical chains consisting entirely of descents, note that $|u|\geq |w|-2$ because the first and last positions in an embedding satisfying the necessary restrictions are the only ones which can be zero.  Whenever $|u|=|w|$, there is also only one such embedding.  If $|u|=|w|-2$ there can only be one such embedding as only the first and last letters of $w$ can be reduced, implying $0u0$ is the only embedding of $u$ into $w$.  If $|u|=|w|-1$, then it is possible the prefix embedding and the suffix embedding satisfy the necessary restrictions.  Thus, there are at most two embeddings of $u$ that satisfy the restrictions for a critical chain consisting entirely of descents.  Since there is only one weakly decreasing permutation of any set $M_{\eta}$, this completes the proof.
\end{proof}

We note the example $[121,1221]$ from Table~\ref{table:121,1221} provides a nice illustration of this proposition, as the critical chains whose chain ids are $2-1$ and $4-3$ have MSIs caused by strong descents.  As stated in the theorem, we can have at most $2$ critical chains consisting entirely of strong descents.

The remaining critical chains must contain at least one MSI caused by a principal factor.  So these chains contain a principal factor of $w$ or a principal factor of some $v_i$ with the property that $C(w,v_{i+1})$ consists entirely of descents $v_k$ such that $v_{k-1}\neq v_{k+1}11$.  The second possibility is easier to work with if we consider the relationship between the principal factor of $v_i$ and $w$. This is the content of the next proposition.  

\begin{Prop}
\label{ofstruc}
If a critical maximal chain $C$ in $[u,w]$ does not consist entirely of descents, then it contains a $v_i\in (u,w]$ with a principal factor $p_{v_i}\geq u$ such that $p_{v_i}1^m$ is a maximal outer factor of $w$ for some $m\geq0$.
\end{Prop}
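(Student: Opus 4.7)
My plan is to take $v_i := v_a$ for the earliest principal-factor MSI $C(v_a, v_b)$ of $C$ in $I(C)$, and set $p_{v_i} := v_b$. The existence of such an MSI is forced by the hypothesis: since $C$ does not consist entirely of descents, pick any ascent $v_k$ in $C$; by Theorem~\ref{gMSIchar} every MSI is either a singleton strong descent or caused by a principal factor, and an ascent cannot be a singleton strong descent, so the MSI of $I(C)$ containing $v_k$ must be principal-factor-caused. The two easy conditions are then immediate: $v_i = v_a \in (u, w]$ and $p_{v_i} = v_b \geq u$, since $v_a > v_b \geq u$ are all elements of the chain $C$.

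Let $\eta_{v_a}$ be the embedding of $v_a$ in $w$ coming from $C$, with support $[p, q]$, and set $m := |w| - q \geq 0$. The suffix portion of the outer-factor claim is direct: because $v_b$ is a (generalized) suffix of the word $v_a$ and $v_a$ is a factor of $w$ ending at position $q$, one has $v_b(i) \leq v_a(|v_a|-|v_b|+i) \leq w(q-|v_b|+i)$ for $i = 1, \ldots, |v_b|$, and the trailing $1^m$ slots into positions $q+1, \ldots, |w|$, each $\geq 1$ by positivity in $\P$. For the prefix portion, it suffices to show $v_a$ is prefix-embedded in $w$ (i.e.\ $p = 1$), because then $v_b \leq v_a$ as a prefix gives $v_b(i) \leq v_a(i) \leq w(i)$ for $i \leq |v_b|$, while the trailing $1^m$ contributes no constraint beyond $w(i) \geq 1$.

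Establishing $p = 1$ is the main obstacle. By our choice of the earliest principal-factor MSI, the preceding MSIs in $J(C)$ covering $v_1, \ldots, v_{a-1}$ are all singletons of strong descents, which forces the labels to strictly decrease: $l_1 > l_2 > \cdots > l_a$. Combined with the strong-descent restriction $v_{j-1} \neq v_{j+1}11$ (which bars two successive reductions from chopping $1$'s off the end) and the first-admissible structure of $C[v_a, v_b]$ granted by Theorem~\ref{gMSIchar}, this should rule out the degenerate situation in which $w(1) = 1$ and some $l_j = 1$ reduces position $1$ to zero in $\eta_{v_a}$. Finally, maximality of $v_b 1^m$ follows by contradiction: if it were strictly contained in an outer factor $v'$ of $w$, one could build a chain $C' < C$ by performing the first-admissible reductions toward $v'$'s prefix embedding (Lemma~\ref{flc}), passing through $v'$ as a word, and then descending to $u$, yielding $C' \supseteq C \setminus C(v_a, v_b)$ as word sets and contradicting the MSI property of $C(v_a, v_b)$. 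The prefix/embedding analysis is the technical crux; the suffix claim, the maximality argument, and the existence of a principal-factor MSI are largely formal consequences of the definitions.
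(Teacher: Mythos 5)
Your plan correctly identifies the first principal-factor MSI $C(v_a,v_b)$ and sets $v_i=v_a$, $p_{v_i}=v_b$, which is precisely what the paper does, and your existence argument is essentially right (one small correction: you should pick any $v_k$ that is not a descent, i.e.\ $l_k\leq l_{k+1}$, rather than literally an ``ascent,'' since in $\P^*$ repeated labels $l_k=l_{k+1}$ are possible and such a $v_k$ is also not a strong descent). However, there are two genuine gaps.

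First, the prefix-embedding step ($p=1$) is not a ``degenerate situation'' you can wave away. The paper needs the concrete observation that because $v_a$ itself lies in a singleton strong-descent interval of $J(C)$, one has $l_a>l_{a+1}\geq 1$, whence $l_j\geq 2$ for all $j\leq a$, so position $1$ is never touched; and the strong-descent condition $v_{j-1}\neq v_{j+1}11$ together with the strictly decreasing labels $l_1>\cdots>l_{a+1}$ forces at most one trailing $1$ of $w$ to be reduced, so the support of $\eta_{v_a}$ is $[1,|w|]$ or $[1,|w|-1]$. This is exactly the content of the paper's sentence ``the chain id of $C$ decreases through $l_{i+1}$ \ldots only the last letter of $w$ can be reduced to $0$,'' and it is the crux, not a formality.

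Second, and more seriously, your maximality argument is logically circular. You construct $C'<C$ with $C'\supseteq C\setminus C(v_a,v_b)$ and declare this ``contradicting the MSI property of $C(v_a,v_b)$,'' but $C-C(v_a,v_b)\subset C'$ for some $C'<C$ is precisely the \emph{definition} of $C(v_a,v_b)$ being a skipped interval — no contradiction at all. To contradict minimality you would need $C'\supseteq C\setminus I$ for some proper sub-interval $I\subsetneq C(v_a,v_b)$, which requires showing that $C'$ passes through some word strictly inside $C(v_a,v_b)$; the chain you describe (reducing toward $v'$'s prefix embedding and passing through $v'$) does not obviously do that, and $v'$ need not even lie on $C$. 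Moreover, you are attempting to prove something stronger than the paper does: the paper only establishes that $p_{v_i}1^{m_0}$ is \emph{an} outer factor for $m_0\in\{0,1\}$ and then appeals to the existence of a maximal outer factor $v$ containing $p_{v_i}$ of the form $p_{v_i}1^\ell$; it never claims that $p_{v_i}1^{m_0}$ itself, for your specific $m=|w|-q$, is maximal. Trying to pin down that particular $m$ is both unnecessary and, via the argument you give, unjustified.
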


\begin{proof}
By Theorem~\ref{gMSIchar}, $C$ contains at least one MSI caused by a principal factor.  Suppose $C(v_i,p_{v_i})$ is the first such MSI.  If $p_{v_i}$ is a principal factor of $w$, there is nothing to show.  If not, then by Theorem~\ref{gMSIchar}, $w\neq v_211$ and the chain id of $C$ decreases through $l_{i+1}$.  This implies only the last letter of $w$ can be reduced to $0$ in $\eta_{v_i}$. So $v_i$ and thus $p_{v_i}$ are prefixes of $w$.  

To complete the proof, it suffices to show that $p_{v_i}1^m$ is an outer factor for some $m\geq0$ because then the theorem holds for $p_{v_i}1^\ell$, where $\ell=|v|-|p_{v_i}|$ and $v$ is a maximal outer factor of $w$ containing $p_{v_i}$.  Recall that $p_{v_i}$ is an outer factor of $v_i$, which is a prefix of $w$ such that only the last letter of $w$ can be reduced to $0$ in $\eta_{v_i}$.  Therefore, $p_{v_i}$ or $p_{v_i}1$ is an outer factor of $w$ so that desired statement holds for $m=0$ or $m=1$. 
\end{proof}

In order to have a critical chain in $[u,w]$ involving an MSI resulting from a principal factor $p_{v_i}$, $p_{v_i}$ needs to be contained in a different MSI or $p_{v_i}$ must equal $u$. By Theorem~\ref{gMSIchar}, $p_{v_i}$ could be contained in one of three types of MSI: an MSI caused by a strong descent, an overlapping MSI caused by a principal factor, or an adjacent MSI caused by a principal factor.  We will show the last possibility cannot occur. 

\begin{Prop}
\label{pfover}
Suppose $C(v_i,p_{v_i})$ is an MSI of a critical chain $C$ caused by the principal factor $p_{v_i}$.  Then $p_{v_i}$ is contained in an MSI caused by a descent or an overlapping MSI caused by a principal factor.
\end{Prop}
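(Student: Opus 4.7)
Write $q=p_{v_i}$ and let $k$ satisfy $v_k=q$. By Theorem~\ref{gMSIchar}, the segment $C[v_i,q]$ is the first admissible chain in $[q,v_i]$ ending at the prefix embedding of $q$, so Lemma~\ref{flc} says its final label $l_k$ corresponds to position $|q|+1$, the smallest entry of the decreasing suffix. This forces $v_{k-1}=q\cdot 1$, namely $q$ with a trailing $1$ appended.

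Since $C$ is critical and $v_k$ lies in the interior of $C$, some MSI $I=C(v_{k'},v_{k''})$ with $k'<k<k''$ contains $v_k$. I will split into three subcases. If $k'<k-1$, then $v_{k-1}$ belongs to the interiors of both $I$ and $C(v_i,q)$, so $I$ overlaps $C(v_i,q)$, and since $I$ has at least two elements, Theorem~\ref{gMSIchar} says it is caused by a principal factor; this is the overlapping case of the proposition. If $k'=k-1$ and $k''=k+1$, then $I=\{v_k\}$, so Proposition~\ref{gdescent} says $v_k$ is a strong descent, giving the descent case. The only remaining case is $k'=k-1$ with $k''>k+1$, in which $I$ would be a non-overlapping multi-element MSI, and I must rule this out.

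In that remaining case, Theorem~\ref{gMSIchar} forces $v_{k''}$ to be a principal factor of $v_{k-1}=q\cdot 1$, and the first label $l_k$ of $C[v_{k-1},v_{k''}]$ to equal the principal index of $v_{k''}$ in $v_{k-1}$; since $l_k$ corresponds to position $|v_{k-1}|$, this principal index equals $|v_{k-1}|$. The main obstacle, and the only step requiring new work, is the following claim: \emph{if $w\in\P^*$ has $w(|w|)=1$ and $p$ is a principal factor of $w$ with principal index $|w|$, then $p$ is flat.} My plan for the claim is a downward induction on the position $j$: the base case comes from the suffix inequality $p(|p|)\leq w(|w|)=1$; for the inductive step, the principal-index hypothesis forces $w(j')=p(j')$ at each reducible position $j'<|w|$ and $w(j')=1$ at the remaining interior positions, which together with the suffix inequality $p(j)\leq w(j+|w|-|p|)$ and the inductive hypothesis applied at position $j+|w|-|p|$ yields $p(j)\leq 1$. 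Applied to $w=v_{k-1}=q\cdot 1$, the claim forces $v_{k''}$ to be flat, contradicting the nonflatness of principal factors and ruling out the remaining case.
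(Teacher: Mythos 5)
Your argument reaches the same contradiction as the paper's proof---the principal index of the alleged adjacent MSI would land on the trailing $1$ of $p_{v_i}\cdot 1$, which a non-flat principal factor can never have---so the approach is essentially the same, not a different route. Two remarks on the execution. First, the ``claim'' you single out as the one piece of new work is already asserted in the paper immediately after the definition of principal factor: for a non-flat principal factor $p$ of $w$ with principal index $i$, the paper states $w(i)>1$. Your downward induction is a reasonable expansion of the paper's terse justification, but the paper's proof of Proposition~\ref{pfover} simply invokes the fact directly, observing that $v_j(l_{j+1})=1$ contradicts $v_j(l_{j+1})>1$ where $v_j=p_{v_i}\cdot 1$. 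Second, your Case 2 has a small gap: you cite Proposition~\ref{gdescent}, but that proposition begins ``Suppose $v_i$ is a descent,'' and a length-one MSI need not be a descent---by Theorem~\ref{gMSIchar} it could instead be caused by a principal factor of $v_{k-1}$ with principal index $l_k$ (for instance $C(22,2)=\{21\}$ in $[2,22]$, where $l_1=l_2=2$). Ruling out that alternative in your setting is exactly the same principal-index computation you do in Case 3, so Cases 2 and 3 should really be handled by a single appeal to Theorem~\ref{gMSIchar} followed by the contradiction, rather than presented as logically independent.
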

\begin{proof}
Let $v_{j+1}=p_{v_i}$.  Suppose for a contradiction that $C(v_j,p_{v_j})$ is an MSI caused by a principal factor.  Since $C(v_i,v_{j+1})$ is an MSI caused by a principal factor, Theorem~\ref{gMSIchar} implies $C[v_i,v_{j+1}]$ is the first admissible chain ending at the corresponding embedding.  So by Lemma~\ref{flc}, $v_{j}=v_{j+1}1$.  However, this implies the value at the principal index $l_{j+1}$ of $p_{v_j}$ in $v_j$ is $v_j(l_{j+1})=1$.  This contradicts the fact that $v_j(l_{j+1})>1$, as $p_{v_j}$ was assumed to be a principal factor of $v_j$.  So $C(v_j,p_{v_j})$ is not an MSI caused by a principal factor, completing the proof.
\end{proof}

While refining the set of MSIs $I(C)$ to create the set $J(C)$,  we reduce any intervals remaining in $I(C)$ that overlap with an earlier MSI and remove any that are no longer containment minimal after this reduction.  The next proposition states that overlapping MSIs in the set $I(C)$ of a critical chain must come in pairs.

\begin{Prop}
\label{overlap}
An MSI of a critical chain can overlap with at most one other MSI.
\end{Prop}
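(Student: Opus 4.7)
The plan is to argue by contradiction: assume MSI $I_1$ overlaps with at least two other MSIs $I_2$ and $I_3$ of a critical chain $C$, and derive a contradiction using the classification of MSIs in Theorem~\ref{gMSIchar} and the rigidity of first admissible chains supplied by Lemma~\ref{flc}. First I would observe that any length-one MSI caused by a strong descent cannot overlap another MSI: if $\{v_k\}$ is such an MSI and $I'$ is another MSI with $v_k \in I'$, then $\{v_k\}\subsetneq I'$, contradicting the minimality of $I'$. Hence all three of $I_1, I_2, I_3$ must be MSIs caused by principal factors, and writing $I_1 = C(v_i, v_j)$ with $v_j = p_{v_i}$ and similarly $I_2 = C(v_{k_2}, v_{\ell_2})$, $I_3 = C(v_{k_3}, v_{\ell_3})$, Theorem~\ref{gMSIchar} tells me each of the chains $C[v_i, v_j]$, $C[v_{k_2}, v_{\ell_2}]$, $C[v_{k_3}, v_{\ell_3}]$ is a first admissible chain whose label sequence is completely determined by its endpoints via Lemma~\ref{flc}.

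Overlap without containment leaves only two configurations for each overlapping MSI relative to $I_1$: either it starts before $v_i$ and ends strictly inside $I_1$ (a left overlap), or it starts strictly inside $I_1$ and ends after $v_j$ (a right overlap). I would then handle three subcases, (L,L), (R,R), and the mixed (L,R). In the (R,R) subcase (the (L,L) case being symmetric), I relabel so $k_2 < k_3$; minimality of $I_2$ prevents $\ell_3 \leq \ell_2$ (else $I_3 \subseteq I_2$), giving $i < k_2 < k_3 < j < \ell_2 < \ell_3$. The shared block of labels $l_{k_3+1}, \ldots, l_{\ell_2}$ must simultaneously fit the unimodal-with-decreasing-suffix shape guaranteed by Lemma~\ref{flc} for both $C[v_{k_2}, v_{\ell_2}]$ and $C[v_{k_3}, v_{\ell_3}]$. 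I would chase this to conclude that $v_{\ell_2}$ is itself a maximal outer factor of $v_{k_3}$ with principal index $l_{k_3+1}$, exhibiting $C(v_{k_3}, v_{\ell_2})$ as a skipped interval properly contained in $I_3$, contradicting the minimality of $I_3$.

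In the mixed (L,R) subcase $v_i$ lies in $I_2$ and $v_j$ lies in $I_3$. Since $l_{i+1}$ is by definition the principal index of $v_j$ in $v_i$, comparing with the decreasing suffix of $C[v_{k_2}, v_{\ell_2}]$'s label sequence pins down where $l_{i+1}$ sits in that sequence; the rigidity of the first admissible structure then forces $v_{\ell_2}$ to be an outer factor of $v_i$ of a specific length, producing a containment between $I_1$ and $I_2$ that contradicts MSI minimality. The main obstacle will be the detailed combinatorial bookkeeping required in each subcase to track how the key labels $l_{i+1}, l_{k_2+1}, l_{k_3+1}$ fit into the rigid unimodal shapes of the three first admissible label sequences; the whole argument rests on the uniqueness in Lemma~\ref{flc}, which ensures that once the endpoints of a principal-factor MSI are fixed, no freedom remains in the intermediate vertices to accommodate two genuinely distinct MSIs overlapping $I_1$.
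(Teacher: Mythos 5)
Your proposal diverges from the paper's proof in a fundamental way, and that divergence creates a real gap. The paper never tries to derive a contradiction with MSI \emph{minimality}; instead, it uses the criticality of $C$ in an essential way. Starting from a maximal sequence of three or more overlapping MSIs $C(v_{i_1},v_{j_1}),\dots,C(v_{i_k},v_{j_k})$ with $i_1<\dots<i_k$, the paper first observes (via Lemma~\ref{flc} and the fact that each $\eta_{v_{i_\ell}}(l_{i_\ell+1})>1$) that the overlaps must occur on the weakly increasing portions of the unimodal label sequences, so that $|v_{i_1}|=|v_{i_2}|=\dots=|v_{i_k}|$. It then traces the $I(C)\to J(C)$ refinement: $J_1=C(v_{i_1},v_{j_1})$, all later $I_\ell$ truncate to intervals containing $C[v_{j_1},v_{j_2})$, so $J_2=C[v_{j_1},v_{j_2})$ and the rest are discarded as non-minimal. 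The conclusion is that $v_{j_2}$ is left uncovered by $J(C)$, so by Theorem~\ref{chainmu}, $C$ is not critical. Your sketch never invokes the $J(C)$-covering criterion, so even if your local arguments were right, you would be proving a strictly stronger statement (about all chains, not just critical ones) that the paper does not assert.

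Moreover, the key step of your (R,R) case is false as stated. You want $v_{\ell_2}$ to be a maximal outer factor of $v_{k_3}$ with principal index $l_{k_3+1}$. But $v_{\ell_2}$ is the principal factor of $v_{k_2}$, and by the definition of principal factor, the very first reduction from $v_{k_2}$ to $v_{k_2+1}$ destroys the suffix embedding of $v_{\ell_2}$. Since $k_3>k_2$, the word $v_{k_3}$ contains only the prefix embedding of $v_{\ell_2}$; in particular $v_{\ell_2}$ is \emph{not} a suffix of $v_{k_3}$, hence not an outer factor of $v_{k_3}$, and cannot have a principal index there. (Indeed, if $v_{\ell_2}$ \emph{were} an outer factor of $v_{k_3}$, it would already contradict $v_{\ell_3}$ being a maximal outer factor of $v_{k_3}$, which is a simpler observation than the one you make.) The (L,R) subcase is even less developed and would need to overcome the same obstacle. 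You also do not exploit the crucial structural fact $|v_{i_1}|=\dots=|v_{i_k}|$, which is what makes the paper's refinement analysis tractable. Your opening observation that length-one strong-descent MSIs cannot overlap anything is correct and matches the paper, and your instinct that the rigidity of first admissible chains is the engine of the proof is right; but the engine must be coupled to the $J(C)$-covering criterion of Theorem~\ref{chainmu}, not to a direct minimality contradiction.
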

\begin{proof}
Let $C$ be maximal chain of $[u,w]$.  Suppose $C(v_{i_1},v_{j_1}), \ldots, C(v_{i_k},v_{j_k})$ is a maximal sequence of three or more overlapping MSIs satisfying ${i_1}<{i_2}<\ldots<{i_k}$.  Since MSIs caused by strong descents always have length one, they cannot be involved in overlapping MSIs.  So by Theorem~\ref{gMSIchar}, each $v_{j_\ell}$ is a principal factor of $v_{i_\ell}$, $l_{i_\ell+1}$ is the principal index of $v_{j_\ell}$ with respect to the embedding $\eta_{v_{i_\ell}}$ of $v_{i_\ell}$ into $w$, and $C[v_{j_\ell},v_{j_\ell}]$ is the first admissible chain in $[v_{j_\ell},v_{j_\ell}]$ ending at the prefix embedding of $v_{j_\ell}$.  By Lemma~\ref{flc}, any two consecutive MSIs must have unimodal label sequences that overlap.  Note each $\eta_{v_{i_\ell}}(l_{i_\ell+1})>1$ because $l_{i_\ell+1}$ corresponds to a principal index. So each label sequence must contain at least one entry before reaching its decreasing suffix.  Therefore, the overlap between two intervals must start on the weakly increasing portion of the label sequences.  Since no letters are removed before reaching the decreasing suffix, this implies  $|v_{i_1}|=|v_{i_2}|=\ldots=|v_{i_k}|$. So each MSI $C(v_{i_\ell},v_{j_\ell})$ includes all of the decreasing suffix of $C(v_{i_1},v_{j_1})$, and each MSI except the first contains all of the decreasing suffix of $C(v_{i_2},v_{j_2})$. 

Consider the process of refining the set of intervals $I(C)$ to $J(C)$.  The first interval in the sequence of overlapping intervals, $C(v_{i_1},v_{j_1})$, is added to $J(C)$.  Any other interval in $I(C)$ has its overlap with this interval removed, and any interval that is no longer containment minimal is discarded.  However, each interval $C(v_{i_\ell},v_{j_\ell})$ left in $I(C)$ contains the truncated portion of $C(v_{i_2},v_{j_2})$.  So $C[v_{j_1},v_{j_2})$ is added to $J(C)$ and the rest of the intervals are discarded.  Thus, $v_{j_2}$ is not in an interval in $J(C)$ and $C$ is not a critical chain.
\end{proof}

We now have enough information to give a nice description of the structure of a critical chain.

\begin{Thm}
\label{JCstruc}
Let $C:w=v_0\stackrel{l_1}{\rightarrow}v_1\stackrel{l_2}{\rightarrow}\ldots\stackrel{l_{n-1}}{\rightarrow} v_{n-1}\stackrel{l_n}{\rightarrow}v_n=u$ be a maximal chain of $[u,w]$.  Then $C$ is a critical chain if and only if $C(w,u)$ can be written as a sequence of intervals $$C[v_{i_1}=v_1,v_{i_2})\cup C[v_{i_2},v_{i_3}) \cup \ldots \cup C[v_{i_{k-1}},v_{i_k}=u)$$ where each interval $C[v_{i_j},v_{i_{j+1}})$ is one of the following three types:
\begin{enumerate}
\item $C[v_{i_j},v_{i_{j+1}})$ is an MSI caused by the strong descent $v_{i_{j}}$.
\item $C[v_{i_j},v_{i_{j+1}})$ is an MSI caused by the principal factor $v_{i_{j+1}}$ of the word $v_{i_j-1}$.
\item The word $v_{i_{j+1}}$ is a principal factor of a word in  $C[v_{i_{j-1}},v_{i_{j}})$ and satisfies $v_{i_{j+1}}1^m=v_{i_{j}}$ for $m=|v_{i_{j}}|-|v_{i_{j+1}}|>0$. The value $m$ is unique in the sense that no other word satisfies the description of $v_{i_{j+1}}$ for another value $m$.
\end{enumerate}
Furthermore, type (1) intervals are followed by intervals of type (1) or (2), type (2) intervals are followed by intervals of type (1) or (3), and type (3) intervals are followed by intervals of type (1).  Finally, only intervals of type (1) or (2) can begin the decomposition.
\end{Thm}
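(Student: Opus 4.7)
By Theorem~\ref{chainmu}(1), $C$ is critical if and only if $J(C)$ covers $C(w,u)$, so the plan is to prove the equivalence between this covering property and the existence of the claimed decomposition by interpreting each interval of $J(C)$ as a piece of one of the three types. For the reverse direction, if such a decomposition exists then each type (1) or type (2) piece is already an MSI of $C$ by Theorem~\ref{gMSIchar}, and each type (3) piece is the remainder of a principal-factor MSI that has been truncated against an earlier overlapping principal-factor MSI during the construction of $J(C)$ from $I(C)$. Hence every word of $C(w,u)$ lies in some interval of $J(C)$, so $C$ is critical.

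For the forward direction, I would list the intervals $J_1, J_2, \ldots, J_{k-1}$ of $J(C)$ in the order they appear along $C$ and classify each one. By Theorem~\ref{gMSIchar}, $I(C)$ consists of single-element strong-descent MSIs and principal-factor MSIs, the latter carrying the unimodal label sequences described by Lemma~\ref{flc}. A retained strong-descent MSI yields a type (1) piece and a retained principal-factor MSI yields a type (2) piece. When a principal-factor MSI is truncated against an earlier overlapping principal-factor MSI, the unimodal structure given by Lemma~\ref{flc} forces the two MSIs to share a common decreasing suffix of labels, so the tail that survives the truncation is exactly of the form $C[v_{i_j}, v_{i_{j+1}})$ with $v_{i_j} = v_{i_{j+1}} 1^m$ for some $m > 0$; this is a type (3) piece. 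The adjacency rules then come from Propositions~\ref{pfover} and~\ref{overlap}: Proposition~\ref{pfover} forces the endpoint $p_{v_i}$ of a type (2) piece to lie in another MSI that is either a strong descent (giving a type (1) successor) or an overlapping principal-factor MSI (giving a type (3) successor); Proposition~\ref{overlap} caps mutually overlapping MSIs at two, which rules out two consecutive type (3) pieces and forces the successor of any type (3) to be a fresh MSI, necessarily of type (1) since strong-descent MSIs are never involved in overlaps; and a type (1) piece, being a single length-one MSI not in any overlap, is followed by a fresh MSI of type (1) or type (2). A type (3) piece cannot begin the decomposition because its existence requires a preceding overlapping type (2).

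The main obstacle will be the precise identification of the truncated tail as a type (3) interval and the uniqueness of the exponent $m$. For this I would lean on Lemma~\ref{flc}: the first admissible chain ending at the prefix embedding of a principal factor has a unimodal label sequence with a specific decreasing suffix of consecutive integers $|w|, |w|-1, \ldots, \ell+1$, so two overlapping principal-factor MSIs must share precisely this suffix, and the trailing block of $1$s distinguishing $v_{i_j}$ from $v_{i_{j+1}}$ is then forced by how the suffix unwinds. Once this structural point is pinned down, both the classification of pieces and the adjacency rules fall into place, and the uniqueness of $m$ becomes automatic.
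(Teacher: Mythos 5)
Your proposal follows essentially the same route as the paper: both directions reduce, via Theorem~\ref{chainmu}(1), to showing that the stated decomposition is exactly a classification of the intervals of $J(C)$, with Theorem~\ref{gMSIchar} identifying types (1) and (2), Lemma~\ref{flc}'s unimodality pinning down the truncated remainder as the $1^m$-tail that defines type (3), and Propositions~\ref{pfover} and~\ref{overlap} supplying the adjacency rules and ruling out consecutive type (2) or type (3) pieces. The only place you are lighter than the paper is the uniqueness of $m$, which the paper makes load-bearing in both directions (in the forward direction to certify the remainder is genuinely the second member of an overlapping pair, and in the reverse direction to guarantee the reduced interval survives into $J(C)$ rather than being discarded as non-minimal); your ``uniqueness of $m$ becomes automatic'' should be unpacked along those lines, but the plan is sound.
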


\begin{proof}
For the forward implication, we need to show that in a critical chain, each interval in the set $J(C)$ is one of the three interval types listed and the order of the intervals respects the ordering restrictions of the proposition.  By Theorem~\ref{gMSIchar}, each MSI is caused by a strong descent or a principal factor.  So intervals of type (1) and type (2) can occur in $J(C)$. Next, since Proposition~\ref{overlap} states that overlapping intervals must occur in pairs, we need to show that the second interval in an overlapping pair is an interval of type (3).  Let $C[v_{i_j},v_{i_{j+1}})$ be the remainder of an interval from $I(C)$ that was reduced in $J(C)$.  From the proof of Proposition~\ref{overlap},  $C[v_{i_j},v_{i_{j+1}})$ must be the remainder of the MSI $C(v,v_{i_{j+1}})$ caused by the principal factor $v_{i_{j+1}}$, where the preceding interval $C[v_{i_{j-1}},v_{i_{j}})$ is of type (2) and $|v|=|v_{i_{j-1}}|$. Since $1$'s in these MSIs can only be trimmed from the back, $v_{i_j}=v_{i_{j+1}}1^m$ for $m=|v_{i_j}|-|v_{i_{j+1}}|$.  To assure this is the second interval in a pair of overlapping intervals in $I(C)$, there cannot be another word in $C[v_{i_j},u]$ that is a principal factor of a word in $C[v_{i_{j-1}},v_{i_{j}})$.  In particular, this means there can be no word in $C[v_{i_j},u]$ satisfying the previous description of $v_{i_{j+1}}$ for another value $m$.   Therefore, $C[v_{i_j},v_{i_{j+1}})$ is an interval of type (3) and all intervals reduced from overlapping MSIs in $I(C)$ are of this type. Thus, there are no other types of intervals that can occur in $J(C)$.  Finally, we need to verify the ordering restrictions.  Type (1) intervals cannot be followed by intervals of type (3) because the half-open interval under consideration is empty.  Intervals of type (2) cannot be followed by type (2) intervals by Proposition~\ref{pfover}.  Type (3) intervals cannot be followed by type (2) intervals by Proposition~\ref{pfover}, and cannot be followed by type (3) intervals because the value $m$ is unique.  And type (3) intervals cannot begin the decomposition because the half-open interval under consideration does not exist.  This completes the proof of this implication.

For the backwards implication, since the given set of intervals covers $C$, we need to show the interval types are always reductions of MSIs in $I(C)$.  Type (1) and type (2) intervals are MSIs by Theorem~\ref{gMSIchar}.  Type (3) intervals must occur after a type (2) interval by definition.  Suppose $C[v_{i_{j}},v_{i_{j+1}})$ is a type (3) interval so that the word $v_{i_{j+1}}$ is a principal factor of a word $v$ in  $[v_{i_{j-1}},v_{i_{j}})$.  Lemma~\ref{flc} implies that the label sequence of $C[v_{i_{j-1}},v_{i_{j}})$ is unimodal.  So Lemma~\ref{flc} also implies that $v$, the word which has $v_{i_{j+1}}$ as a principal factor, occurs before the decreasing suffix of the label sequence of $C[v_{i_{j-1}-1},v_{i_{j}}]$.   Thus, $C[v,v_{i_{j+1}}]$ is the first admissible chain in the corresponding interval $[v_{i_{j+1}},v]$ with $v_{i_{j+1}}$ in the prefix embedding.  So by Theorem~\ref{gMSIchar}, $C(v,v_{i_{j+1}})$ is the MSI in $I(C)$ that is reduced to the interval $C[v_{i_{j}},v_{i_{j+1}})$.  Furthermore, this reduced interval appears in $J(C)$ because $v_{i_{j+1}}1^m=v_{i_{j}}$ for a unique value of $m$.   Indeed, if $m$ were not unique, a different reduced interval would either be contained inside this one or have this interval contained within it; both cases contradict the fact that $J(C)$ covers $C$.  Since the ordering restrictions respect Propositions~\ref{pfover} and~\ref{overlap}, this completes the proof of this implication.
\end{proof}

In the interval $[121,1221]$, the critical chain with chain id $2-1$ consists of a single type (1) interval, $C[1121,121)=C(1221,121)$ (see Table~\ref{table:121,1221}).  In the interval $[2,2212]$, the critical chain with chain id $2-4-4-3-2$ consists of the type (2) interval $C[2112,211)=C(2212,211)$ followed by the type (3) interval $C[211,2)$, which is truncated from the MSI $C(2112,2)$ (see Table~\ref{table:2,2212}).

Using Theorem~\ref{JCstruc}, we can separate the critical chains of $[u,w]$ into three groups based on what happens after the first type (2) interval in the chain: critical chains with no type (2) intervals, critical chains whose first type (2) interval is the last interval or is followed by a type (1) interval, and critical chains whose first type (2) interval is followed by a type (3) interval.  Notice the first group is investigated in Proposition~\ref{alldes}.  So if we can find the total contribution of all critical chains whose first interval is a given type (2) interval, we will have enough information to write down a recursive formula for the M\"obius value.

Let $C$ be a critical chain.  If $J(C)$ contains any type (2) intervals, then by Theorem~\ref{JCstruc} the first type (2) interval in the set must either be the first interval or occur after a sequence of type (1) intervals. Furthermore, if the first type (2) interval is followed by a type (3) interval, Theorem~\ref{JCstruc} implies the principal factor causing the type (3) interval must be a certain prefix of the principal factor causing the type (2) interval. So to facilitate the exposition, we need to make several new definitions.

Define $w\setminus1^m$ to be the word that results when $m$ $1$'s are removed from the suffix of $w$, or as undefined if $w$ ends in less than $m$ $1$'s.

Define a word $v$ to be a \emph{base of $w$} if $v(j)=w(j)$ or $v(j)=w(j)-1$ for all $j$ and $|v|=|w|$ or $|w|-1$.  Define the \emph{degree} of a base to be the number of indices $j$ for which $v(j)=w(j)-1$.  This way, if a word $v_i$ in a chain $C$ is a base of $w$, then it is a base of degree $i$.  When $i>0$, the language ``$v_i$ is based in $C$'' will indicate the word $v_i$ of the chain $C$ is a base of $w$ and the labels $l_1,\ldots,l_i$ of $C$ form a decreasing sequence.  In Proposition~\ref{alldes}, we showed that this condition forces each word $v_1,\ldots,v_{i-1}$ to be a strong descent.

Suppose $v_i$ is a base of $w$ of degree $i$.  Let $l$ be the index of the smallest position satisfying $v_i(l)=w(l)-1$, or $|w|+1$ if $i=0$. We define any word $p_{v_i}$ that is a principal factor of $v_i$ and whose principal index takes a value less than $l$ to be a \emph{principal factor of $w$ of degree $i$}. This definition is an extension of the definition of a principal factor because a principal factor of $w$ satisfies the new definition for $i=0$.  So to maintain consistency in the language, when a degree is not noted in the language or the notation, the assumption will be that the principal factor has degree $0$.

In the example $[2,2212]$ found in Table~\ref{table:2,2212}, the bases of $2212$ of degree $1$ are $1212$, $2112$, and $2211$.  The base $2211$ of admits $211$ as a principal factor of degree $1$, but $2112$ and $1212$ have principal factors with principal indices greater than the smallest position satisfying $v_i(l)=w(l)-1$.  Returning to an example first given after the definition of a principal factor, the degree $2$ base $12211$ of the word $12222$ has $1211$ as a principal factor, making $1211$ a principal factor of degree $2$.  In fact, $1211$ is a principal factor of $4$ bases of $12222$: $12211$, $12212$, $12221$. and $12222$ itself.

Let $p_{v_i}$ be a principal factor of $w$ of degree $i$.  By Theorem~\ref{JCstruc}, $p_{v_i}$ could cause the first type (2) interval $C(v_i,p_{v_i})$ in some critical chain $C$ because when $v_i$ is based in $C$, the condition on the principal index of $p_{v_i}$ implies $v_i$ is a strong descent.  Notice $p_{v_i}$ must be an outer factor of $w$ or $w\setminus1$.   The latter is a possibility when $w\setminus1$ is defined because, by the definition of a base, $v_i(|w|)$ could be $0$. Furthermore, the proof of Proposition~\ref{ofstruc} implies $p_{v_i}1^m$ is a maximal outer factor of $w$ for some $m\geq0$. Finally, $p_{v_i}$ could be a principal factor of other bases $v_j$ of $w$.

Suppose $p_{v_i}$ is a principal factor of $v_i$.  Let $v_i'$ be the word that results when the letter in $v_i$ at the principal index of $p_{v_i}$ is reduced by $1$.  That is, $v_i'(j)=v_i(j)-1$ when $j$ is the principal index of $p_{v_i}$ and $v_i'(j)=v_i(j)$ for all other indices $j$.  Define the \emph{primary prefix} $x({p_{v_i}})$ of a principal factor $p_{v_i}$ to be the factor $p_{v_i}\setminus 1^m$, where $m$ is the smallest positive integer such that $p_{v_i}\setminus 1^m$ is an outer factor of $v_i'$.  So if no $m>0$ satisfies the restriction, the primary prefix is undefined.

Intuitively, the primary prefix of a principal factor $p_{v_i}$ is the longest proper prefix of $p_{v_i}$ that has two embeddings in $v_i'$ and only differs from $p_{v_i}$ by some number of $1$'s removed from the back.  Notice the primary prefix depends on the word $v_i'$, or perhaps more intuitively, the word $v_i$ and the principal index of $p_{v_i}$ in $v_i$.  In the example $[2,2212]$, $2$ is the primary prefix of the principal factor $211$ of $2212$.  However, in the interval $[2,2222]$, which contains the previous interval as a subinterval, $21$ is the primary prefix of the principal factor $211$ of $2222$.  And in the interval $[2,2211]$, the principal factor $211$ does not have a primary prefix.

The following proposition asserts that the primary prefix is the only word that can cause a type (3) interval after the type (2) interval $C(v_i,p_{v_i})$.

\begin{Prop}
\label{pmpfx}
Let $C$ be a maximal chain of $[u,w]$ and suppose $C(v_i,p_{v_i})$ is a type (2) interval in the set $J(C)$.  Then $C(v_i,p_{v_i})$ is followed by a type (3) interval $C[p_{v_i},x)$ in $J(C)$ if and only if $x$ is the primary prefix of $p_{v_i}$ and $x$ appears in $C$.
\end{Prop}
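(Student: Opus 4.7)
The argument will be a biconditional with both directions. For the forward direction, suppose the type (2) interval $C[v_{i+1},p_{v_i})$ is followed by a type (3) interval $C[p_{v_i},x)$ in $J(C)$. By the type (3) conditions of Theorem \ref{JCstruc}, $x 1^m = p_{v_i}$ for a unique $m>0$, and there is some $v\in (v_i,p_{v_i})$ for which $x$ is a principal factor of $v$. Because $C(v,x)$ overlaps with $C(v_i,p_{v_i})$ in $I(C)$, the length-equality argument in the proof of Proposition \ref{overlap} gives $|v|=|v_i|$, so the chain $C[v_{i+1},v]$ contains no trimming steps, and $v(j)\leq v_{i+1}(j)$ at every position. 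Hence $x$, being an outer factor of $v$, is also an outer factor of $v_{i+1}=v_i'$.

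To conclude that $x$ is the primary prefix, I need to show $m$ is minimal. Suppose for contradiction that there is $m'$ with $0<m'<m$ such that $y:=p_{v_i}\setminus 1^{m'}$ is an outer factor of $v_i'$. Because the chain $C[p_{v_i},x]$ trims trailing $1$'s one at a time (as forced by the critical, strictly decreasing labels), the word $y=x 1^{m-m'}$ appears in $C$ strictly between $p_{v_i}$ and $x$. The key step is to show $y$ is a principal factor of some word $v'\in (v_i,p_{v_i})$: since $y$ is an outer factor of $v_{i+1}$, one locates the earliest word of the chain $C[v_{i+1},p_{v_i})$ at which $y$ becomes a maximal outer factor, and the principal-index condition follows from the first-admissible structure of Lemma \ref{flc}. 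This gives an MSI $C(v',y)\in I(C)$ whose overlap with $C(v_i,p_{v_i})$ truncates to $C[p_{v_i},y)$, a strict subset of $C[p_{v_i},x)$. By the containment-minimality step in the construction of $J(C)$, the interval $C[p_{v_i},x)$ would then be discarded, contradicting the assumption that it lies in $J(C)$. Thus $m$ is minimal, $x$ is the primary prefix, and $x$ appearing in $C$ is automatic.

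For the backward direction, assume $x$ is the primary prefix of $p_{v_i}$ and $x$ appears in $C$. Write $x=p_{v_i}\setminus 1^m$ where $m$ is the smallest positive integer with $p_{v_i}\setminus 1^m$ an outer factor of $v_i'$. I would verify $x$ is a principal factor of $v_{i+1}$ directly: the minimality of $m$ blocks any longer $p_{v_i}\setminus 1^{m'}$ from being an outer factor, supplying the maximality condition, and the principal-index condition is forced by how $v_{i+1}$ arises from $v_i$ (reducing at the first position in the prefix embedding of $x$ that strictly exceeds $x$ destroys the suffix embedding). By Theorem \ref{gMSIchar}, this produces an MSI $C(v_{i+1},x)\in I(C)$ that overlaps with $C(v_i,p_{v_i})$; its truncation is precisely $C[p_{v_i},x)$. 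The minimality of $m$ precludes any overlapping MSI from yielding a strictly smaller truncation, so $C[p_{v_i},x)$ is containment-minimal and appears in $J(C)$ as the type (3) interval following the given type (2) interval.

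The main obstacle is bridging the static definition of the primary prefix, which concerns only $p_{v_i}$ and $v_i'$, with the dynamic principal-factor structure that drives MSI formation throughout the chain. Specifically, one must argue that an outer factor of $v_{i+1}$ of the form $p_{v_i}\setminus 1^{m'}$ is actually realized as a principal factor (and not merely an outer factor) of some word in $C[v_{i+1},p_{v_i})$; verifying both the maximality condition and the principal-index condition requires a careful analysis of how the reduction at the principal index of $p_{v_i}$ in $v_i$ constrains the subsequent chain, together with the flat-word edge cases in the definitions of reducibility and principal index.
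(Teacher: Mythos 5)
Your forward direction follows the paper's approach in spirit, but your backward direction contains a genuine error that the paper avoids.

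In the backward direction, you claim that $x$ is a principal factor of $v_{i+1}$ (equivalently $v_i'$). This is false in general. The primary prefix $x$ is a maximal outer factor of $v_i'$, but that does not make it a principal factor: reducing the letter at its principal index in $v_i'$ need not destroy the suffix embedding of $x$. For a concrete counterexample in $\P^*$, take $w=31313$. Then $p_{v_0}=311$ is a principal factor of $w$ with principal index $3$, so $v_1=31213$. The primary prefix is $x=3$. The principal index of $3$ in $v_1=31213$ is position $3$, and reducing gives $31113$, which still has $3$ as a suffix at position $5$. So $x=3$ is not a principal factor of $v_{i+1}=v_1$. The word of which $x$ is a principal factor is $v_2=31113$ (with principal index $5$), which is the last word in $C(v_0,p_{v_0})$ containing $x$ as an outer factor. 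This is exactly the word the paper's proof isolates: it sets $v_m$ to be the last word in $C(v_i,p_{v_i})$ containing $x$ as an outer factor, and then verifies $x$ is a principal factor of $v_m$ with principal index $l_{m+1}$, using the structure of the first admissible chain from Lemma~\ref{flc} to show both the index condition and that the reduction kills the suffix embedding. Your argument skips this and asserts the principal-index condition holds for $v_{i+1}$ ``directly,'' which is the step that fails.

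Your forward direction is structurally aligned with the paper: use Theorem~\ref{JCstruc} to get $x=p_{v_i}\setminus 1^m$ and a word $v$ for which $x$ is a principal factor, use the length-equality argument from Proposition~\ref{overlap} to conclude $x$ is an outer factor of $v_i'$, and then argue minimality of $m$ by contradiction. Your sketch of the contradiction step (showing a longer $y=p_{v_i}\setminus 1^{m'}$ would be a principal factor of some word in the chain, producing a smaller truncated interval that knocks $C[p_{v_i},x)$ out of $J(C)$) is the same idea the paper uses, though you describe finding ``the earliest word at which $y$ becomes a maximal outer factor'' where again the correct word is the last one containing $y$ as an outer factor, mirroring the $v_m$ construction. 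That loose phrasing is fixable; the backward-direction error is the substantive gap.
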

\begin{proof} First suppose $x$ is the primary prefix of $p_{v_i}$ and that it appears in $C$.  By definition, $x$ is a maximal outer factor of $v_i'$.  Thus, it is a maximal outer factor of any word in $C(v_i,p_{v_i})$ of which it is an outer factor.   Let $v_m$ be the last word in the interval $C(v_i,p_{v_i})$ that contains $x$ as an outer factor.  We will show $x$ is a principal factor of $v_m$ with principal index $l_{m+1}$.  Since $p_{v_i}$ is a prefix of $v_{m+1}$, $x$ is as well.  So $v_{m+1}$ no longer contains the suffix embedding of $p_{v_i}$.  Furthermore, since  $C[v_i,p_{v_i}]$ is the first admissible chain in $[p_{v_i},v_i]$ ending at the prefix embedding of $p_{v_i}$, for all $l_{i+1}<k<l_{m+1}$, either we have $v_m(k)=p_{v_i}(k)$ or we have $v_m(k)=1$ and $p_{v_i}(k)=0$.  Therefore, for all $l_{i+1}<k<l_{m+1}$, either we have $v_m(k)=x(k)$ or we have $v_m(k)=1$ and $x(k)=0$.  This implies $C[v_m,x]$ is the first admissible chain in $[x,v_m]$ ending at the prefix embedding of $x$ and $v_m(l_{m+1})$ is the first reducible letter in $v_m$ greater than the corresponding position in the prefix embedding of $x$.  Thus, $l_{m+1}$ satisfies the definition of a principal index, which implies $x$ is a principal factor of $v_m$ and $C(v_m,x)$ is an MSI of $C$.  Since $x$ is a maximal outer factor of $v_i'$, and $C(v_i,p_{v_i})$ is a type (2) interval, no word between $p_{v_i}$ and $x$ can be a principal factor of a word $v_k$ in $C$.  Therefore, $C(v_m,x)$ is reduced to the type (3) interval $C[p_{v_i},x)$ in $J(C)$, completing the reverse implication.

Now suppose that $C(v_i,p_{v_i})$ is followed by a type (3) interval $C[p_{v_i},x)$ in the set $J(C)$.  Then by Theorem~\ref{JCstruc}, $x$ is a principal factor of a word $v$ in $C(v_i,p_{v_i})$ and $x=p_{v_i}\setminus1^m$ for some $m$.  From the proof of Theorem~\ref{JCstruc}, we know $|v|=|v_i'|$ and $v\leq v_i'$, implying $x$ is an outer factor of $v_i'$.  By Theorem~\ref{JCstruc} part (3), it suffices to show that $x$ is a maximal outer factor of $v_i'$.  For a contradiction, suppose $x$ is not a maximal outer factor of $v_i'$.  Then a word of the form $x1^k$ would be a maximal outer factor of $v_i'$ and by the argument in the paragraph above, a principal factor of some word in $C(v_i,p_{v_i})$.  Thus, $C(v,x1^k)$ would be an MSI in $I(C)$.  This would be reduced to the interval $C[p_{v_i},x1^k)$, which is contained in $C[p_{v_i},x)$, implying that $C[p_{v_i},x)$ could not be in $J(C)$.  This is a contradiction.  Thus, $x$ is a maximal outer factor of $v_i'$, implying it is the primary prefix of the word $p_{v_i}$. 
\end{proof}

Let $\mu(u,v)$ be the normal M\"obius function if $u$ and $v$ are both elements of $\P^*$, or zero if either is undefined.  Define the function $\nu(u,v)$ to be 
$$\nu(u,v)=\displaystyle\sum_{i\geq0} \mu(u,v\setminus 1^i).$$
Notice all the terms in the summation will be zero beyond the largest value $i=m$ for which $v\setminus1^m$ is defined, or the smallest value $i=m$ for which $v\setminus1^m\leq u$.

We are now ready to consider the contribution of critical chains whose first type (2) interval is a specific interval.  This proof is very technical, and we have broken it up into several cases to make it easier to follow.
 
\begin{Prop}
\label{type2}
Suppose $C(v_i,p_{v_i})$ is the first type (2) interval of some critical chain $C$ of $[u,w]$.  Then $p_{v_i}$ is a principal factor of $w$ of degree $i$ and $v_i$ is based in $C$.  Furthermore, the contribution to the M\"obius value $\mu(u,w)$ of all critical chains in $[u,w]$ that have $C(v_i,p_{v_i})$ as the first type (2) interval  is
$$\displaystyle (-1)^i\left(\nu(u,p_{v_i})-  \nu(u,x(p_{v_i}))\right),$$
where $x(p_{v_i})$ is the primary prefix of $p_{v_i}$.
\end{Prop}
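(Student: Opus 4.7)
The plan is to first establish the two structural claims, then split critical chains by what follows the type~(2) MSI, and finally convert the resulting alternating sums into $\nu$ via an auxiliary identity.

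For the structural claims, note that by Theorem~\ref{JCstruc} every $J$-interval of $C$ preceding the first type~(2) interval must itself be type~(1) (type~(2) cannot follow type~(3), and type~(3) cannot begin the decomposition). Hence $v_1,\ldots,v_i$ are strong descents, the labels $l_1 > l_2 > \cdots > l_i$ strictly decrease, and the principal index $l_{i+1}$ of $p_{v_i}$ in $v_i$ is strictly less than $l_i$. These are exactly the conditions for $v_i$ to be based in $C$ and for $p_{v_i}$ to be a principal factor of $w$ of degree $i$. Lemma~\ref{flc} then shows that $C[w,p_{v_i}]$ is uniquely determined by the pair $(v_i, p_{v_i})$.

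To count $S$, I split the critical chains $C$ according to the $J$-interval immediately after the type~(2) MSI. By Theorem~\ref{JCstruc} and Proposition~\ref{pmpfx}, this interval is either a type~(1) at $p_{v_i}$ (Sub-case A) or a type~(3) that trims the trailing $1$'s of $p_{v_i}$ down to $X := x(p_{v_i})$ (Sub-case B), with the degenerate cases $p_{v_i}=u$ and $X=u$ treated separately. Any intermediate trim count $0 < j < m$ (where $X = p_{v_i}\setminus 1^m$) leaves weak-descent words uncovered by any MSI and so yields no critical chain. In Sub-case~A the tail $C[p_{v_i},u]$ corresponds to a critical chain $\bar{C}$ of $[u,p_{v_i}]$ whose first step does not trim the final $1$ of $p_{v_i}$; in Sub-case~B the tail $C[X,u]$ corresponds similarly to such a chain of $[u,X]$, with $C[p_{v_i},X]$ pinned down as the trimming tail of the appropriate first admissible chain. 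Writing $\mathcal{N}(y)$ for the critical chains of $[u,y]$ not beginning with a trim and setting
\[
\tilde{\phi}(y) \;:=\; \mathbb{1}_{y=u} \;+\; \sum_{\bar{C}\in\mathcal{N}(y)}(-1)^{d(\bar{C})},
\]
routine sign bookkeeping gives $(-1)^{d(C)} = (-1)^i(-1)^{d(\bar{C})}$ in Sub-case~A and $(-1)^{d(C)} = -(-1)^i(-1)^{d(C'')}$ in Sub-case~B, so that
\[
S \;=\; (-1)^i\bigl(\tilde{\phi}(p_{v_i}) - \tilde{\phi}(X)\bigr).
\]

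To finish, I will prove $\tilde{\phi}(y) = \nu(u,y)$ by induction on the rank of $y$ above $u$. The base $y=u$ is immediate. For the step, writing $\mu(u,y) = \sum_{\mathcal{N}(y)}(-1)^{d(\bar{C})} + \sum_{\mathcal{T}(y)}(-1)^{d(\bar{C})}$ with $\mathcal{T}(y)$ the critical chains starting with a trim, and comparing with $\nu(u,y) = \mu(u,y) + \nu(u,y\setminus 1)$, reduces the problem to showing $\sum_{\mathcal{T}(y)}(-1)^{d(\bar{C})} = -\tilde{\phi}(y\setminus 1)$. Chains that trim exactly once at the start biject with $\mathcal{N}(y\setminus 1)$ (plus a single boundary chain when $y\setminus 1 = u$), with one sign flip coming from the extra type~(1) interval at the initial trim. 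The hard part will be ruling out two or more consecutive initial trims: such a chain would require the trimmed words to be covered by a type~(2) MSI at $y$, forcing $y\setminus 1^k$ to be a non-flat principal factor of $y$ for some $k\geq 2$. Examining the suffix condition for $y\setminus 1^k$ inside $y = (y\setminus 1^k)\cdot 1^k$ cascades $1$'s backward through the word and ultimately forces $y\setminus 1^k$ to be flat, which the principal-factor definition excludes. Once this emptiness is secured the identity $\sum_{\mathcal{T}(y)}(-1)^{d(\bar{C})} = -\tilde{\phi}(y\setminus 1)$ holds, the induction closes, and the proposition follows.
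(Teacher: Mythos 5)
Your proposal is correct and follows essentially the same route as the paper's proof: the same reduction to the first admissible sub-chain $C[w,p_{v_i}]$, the same split by whether the interval following the type (2) MSI is type (1) or type (3), the same bijection onto critical chains of $[u,p_{v_i}]$ or $[u,x(p_{v_i})]$ that do not start with a trim, and the same use of the non-flatness of principal factors to rule out consecutive initial trims. Your $\tilde{\phi}(y)=\nu(u,y)$ induction is precisely the paper's $O(u,\cdot)$ recursion (with $\tilde{\phi}(y)=\mu(u,y)-O(u,y)$) packaged as a separate, cleaner lemma; the only cosmetic difference is that you rule out consecutive trims by deducing flatness via a cascaded suffix inequality, whereas the paper invokes directly that the principal index of a non-flat principal factor must point to a letter greater than $1$.
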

\begin{proof} Since $C(v_i,p_{v_i})$ is a type (2) interval, Theorem~\ref{JCstruc} implies $p_{v_i}$ must be a principal factor of $v_i$.  If $i=0$, $p_{v_i}$ is a principal factor of $w$ of degree $0$.  If $i\neq 0$, then since $C$ is a critical chain and $C(v_i,p_{v_i})$ is the first type (2) interval, Theorem~\ref{JCstruc} implies $v_i$ and the words $v_{1}, \ldots, v_{i-1}$ which precede it must each be contained in a type (1) interval. Since $v_211\neq v_0$, $|v_i|=|w|$ or $|w|-1$.  Also, the $i$ labels preceding $v_i$ are distinct and form a decreasing sequence.  Therefore, $v_i(j)=w(j)$ or $v_i(j)=w(j)-1$.    This implies $v_i$ is a base of $w$, which allows us to conclude $v_i$ is based in $C$.  Since $v_i$ is in a type (1) interval, $l_{i+1}<l_i$, implying $p_{v_i}$ is a principal factor of $w$ of degree $i$.

To facilitate the discussion, we will first consider the case when $p_{v_i}$ ends with a letter other than $1$.  Then $p_{v_i}\setminus1$ is undefined, so $\nu(u,p_{v_i})=\mu(u,p_{v_i})$.  Note that the set $J(C)$ must cover the entire chain $C(w,u)$ for $C$ to be critical, and we already know $J(C)$ covers $C(w,p_{v_i})$ when $C(v_i,p_{v_i})$ is the first type (2) interval.  Furthermore, $p_{v_i}$ does not have a primary prefix, so by Theorem~\ref{JCstruc}, $C(v_i,p_{v_i})$ can only be followed by an interval of type (1) in $J(C)$.    Recall from the proof of Theorem~\ref{gMSIchar} that the last label in $C(v_i,p_{v_i})$, $l_k$, is the last non-zero position in $p_{v_i}1$.  So $l_k=|p_{v_i}1|$ by Lemma~\ref{flc}.  Thus, in any critical chain containing the type (2) interval $C(v_i,p_{v_i})$, $l_{k+1}<l_k$.  We now consider two subcases depending on whether $u=p_{v_i}$.  

First suppose $u<p_{v_i}$.  Since $p_{v_i}$ does not end in a $1$, it must be a strong descent. So in any maximal chain containing $C[v_i,p_{v_i}]$, $p_{v_i}$ is contained in a type (1) interval.   It follows that $J(C[p_{v_i},u])$ must cover $C(p_{v_i},u)$, whose first label corresponds to $l_{k+1}$ in $C$.  Since any choice of label $l_{k+1}$ puts $p_{v_i}$ in a type (1) interval, the set of critical chains in $[u,w]$ that contain $C(v_i,p_{v_i})$ are in a one-to-one correspondence with the critical chains of $[u,p_{v_i}]$.  The corresponding map between the sets $J(C)$ and $J(C[p_{v_i},u])$ is given by the addition or subtraction of the $i+2$ intervals $v_1, \ldots, v_i, C(v_i,p_{v_i}),$ and $p_{v_i}$.  Therefore, the number of intervals in $J(C)$ is $i+2+|J(C[p_{v_i},u])|$.  So by Theorem~\ref{chainmu}, the total contribution of all these chains to $\mu(u,w)$ is
$$(-1)^{i+2}\mu(u,p_{v_i})=(-1)^i\mu(u,p_{v_i}).$$

Now suppose $u=p_{v_i}$. The above formula follows from Theorem~\ref{chainmu}, as $\mu(p_{v_i},p_{v_i})=1$ and there are precisely $i+1$ intervals in $J(C)=J(C([w,p_{v_i}]))$.  So in this case, $\mu(u,w)=(-1)^i$.  Since $p_{v_i}$ does not have a primary prefix and $\nu(u,p_{v_i})=\mu(u,p_{v_i})$, this completes the proof for this case.

Now we consider the case $p_{v_i}$ ends with a $1$.  If $u=p_{v_i}$, the argument does not change from the one above.  If $u<p_{v_i}$, there are two significant differences from the previous discussion.  First, while $p_{v_i}$ is still a descent $v_k$ in any maximal chain containing $C(v_i,p_{v_i})$, it is possible it could be a weak descent.  The second difference is $p_{v_i}$ could have a primary prefix, implying that $C(v_i,p_{v_i})$ could be followed by a type (3) interval in a critical chain.  Fortunately, because of the required conditions on the descent $p_{v_i}$,  $C(v_i,p_{v_i})$ can only be followed by a type (1) interval if $l_{k+1}<|p_{v_i}|$, and a type (3) interval if $l_{k+1}=|p_{v_i}|$.  This allows us to consider these two cases separately.

First, suppose $p_{v_i}=v_k$ ends with a $1$ and a critical chain $C$ containing the type (2) interval $C(v_i,p_{v_i})$ has $l_{k+1}<|p_{v_i}|$.  Then $C$ can only be critical if $C(v_i,p_{v_i})$ is followed by a type (1) interval and $C(w,v_i]$ consists of $i$ type (1) intervals.  So as above, it follows that $C$ is critical if and only if $J(C[p_{v_i},u])$ covers $C(p_{v_i},u)$, whose first label is $l_{k+1}$ in $C$.  Since $l_{k+1}<|p_{v_i}|$, we have a one-to-one correspondence between the critical chains of $[u,w]$ that contain $C(v_i,p_{v_i})$ and the critical chains of $[u,p_{v_i}]$ whose first label is not $|p_{v_i}|$.  Notice that $\mu(u,p_{v_i})$ could count critical chains of $[u,p_{v_i}]$ whose first label is $|p_{v_i}|$ and first word is $p_{v_i}\setminus1$. So we must subtract out these critical chains when calculating the contribution to the M\"obius value.  Since the corresponding map between the sets of critical chains is given by the addition or subtraction of the $i+2$ intervals $v_1, \ldots, v_i, C(v_i,p_{v_i}),$ and $p_{v_i}$, it follows from Theorem~\ref{chainmu} that the total contribution of all these critical chains to $\mu(u,w)$ is
$$\displaystyle(-1)^{i+2}\left(\mu(u,p_{v_i})-O(u,p_{v_i})\right)=(-1)^i(\mu(u,p_{v_i})-O(u,p_{v_i})),$$
where $O(u,p_{v_i})$ is the total contribution to $\mu(u,p_{v_i})$ of the critical chains of $[u,p_{v_i}]$ for which $v_1'=p_{v_i}\setminus1$.  Since $|p_{v_i}|$ does not satisfy the definition of a principal index, Theorem~\ref{JCstruc} implies that $p_{v_i}\setminus1$ can only be in a critical chain of $[u,p_{v_i}]$ if it is in a type (1) interval.  If $p_{v_i}\setminus1^2$ is undefined, we are looking for all critical chains of $[u,p_{v_i}\setminus1]$.  However, if $p_{v_i}\setminus1^2$ is defined, $O(u,p_{v_i})$ is the contribution of all critical chains of $[u,p_{v_i}\setminus1]$ that do not start with the values $|p_{v_i}\setminus1|$ and $|p_{v_i}\setminus1^2|$, as this would put $p_{v_i}\setminus1$ in a non-MSI-causing descent in $[u,p_{v_i}]$.    Furthermore, these critical chains contain one less interval than those of $[u,p_{v_i}]$ because they do not contain the interval $p_{v_i}$.  So we need to account for this by taking the negative of the values of the chains in $[u,p_{v_i}\setminus1]$.  Thus,
$$\displaystyle O(u,p_{v_i})=-\left(\mu(u,p_{v_i}\setminus1)-O(u,p_{v_i}\setminus1)\right),$$
since $O(u,p_{v_i}\setminus1)$ is the total contribution to $\mu(u,p_{v_i}\setminus1)$ of those critical chains of $[u,p_{v_i}\setminus1]$ for which $v_1''=p_{v_i}\setminus1^2$.  This recursive definition for $O(u,p_{v_i})$ terminates when we find an $\ell$ for which $p_{v_i}\setminus1^\ell$ is undefined or $p_{v_i}\setminus1^\ell<u$ because both of these cases imply $O(u, p_{v_i}\setminus1^\ell)=0$.  Therefore,
$$\displaystyle O(u,p_{v_i})=-\left(\mu(u,p_{v_i}\setminus1)+\mu(u,p_{v_i}\setminus1^2)+\ldots+\mu(u,p_{v_i}\setminus1^{\ell-1})\right).$$
This implies that when $p_{v_i}=v_k$ ends in a $1$, the total contribution of all critical chains containing the type (2) interval $C(v_i,p_{v_i})$ and satisfying $l_{k+1}<|p_{v_i}|$ is $$\displaystyle(-1)^i\left(\mu(u,p_{v_i})+\nu(u,p_{v_i}\setminus1)\right)=(-1)^i\nu(u,p_{v_i}).$$

Our last goal is to consider the case when $p_{v_i}=v_k$ ends with a $1$ and a critical chain $C$ containing the type (2) interval $C(v_i,p_{v_i})$ has $l_{k+1}=|p_{v_i}|$.  In this case, the set $J(C)$ must contain $i$ type (1) intervals before $C(v_i,p_{v_i})$, and a type (3) interval immediately following $C(v_i,p_{v_i})$.  By Proposition~\ref{pmpfx}, the type (3) interval must be caused by the primary prefix of $p_{v_i}$, $x(p_{v_i})$.  So the type (3) interval is $C[p_{v_i},x(p_{v_i}))$.  Notice that Theorem~\ref{JCstruc} implies $x(p_{v_i})$ must be contained in a type (1) interval.  In the previous case, note that $p_{v_i}$ is the first word in the interval $[u,w]$ which is not in the intervals of $J(C)$ required for the type (2) interval $C(v_i,p_{v_i})$, and $p_{v_i}$ must be contained in a type (1) interval.  In this case, $x(p_{v_i})$ is the first word in $[u,w]$ that is not contained in the intervals of $J(C)$ required for the type (3) interval $C[p_{v_i},x(p_{v_i}))$, and $x(p_{v_i})$ must be contained in a type (1) interval for $C$ to be critical. Therefore, the argument is very similar, except $x(p_{v_i})$ takes the place of $p_{v_i}$.  If $u<x(p_{v_i})$, the major difference is $C$ is now critical if and only if $J(C[x(p_{v_i}),u])$ covers $C(x(p_{v_i}),u)$.  So the $i+3$ intervals $v_1, \ldots, v_i, C(v_i,p_{v_i}), C[p_{v_i},x(p_{v_i}))$, and $x(p_{v_i})$ precede the elements of $J(C[x(p_{v_i}),u])$ in the set $J(C)$.  Beyond this detail, the argument leading to $(-1)^i\nu(u,p_{v_i})$ is the same, with $x(p_{v_i})$ taking the place of $p_{v_i}$.  Therefore, the contribution to $\mu(u,w)$ of the critical chains of $[u,w]$ that contain the type (3) interval $C[p_{v_i},x(p_{v_i}))$ is
$$\displaystyle(-1)^{i+3}\nu(u,x(v_i))=(-1)^{i+1}\nu(u,x(p_{v_i})).$$
Note this formula also holds if $u=x(p_{v_i})$ because then $\nu(u,x(p_{v_i}))=\mu(u,u)=1$ and there are $i+2$ intervals in $J(C)$.  

Thus, if $x(p_{v_i})$ is the primary prefix of $p_{v_i}$, the contribution to the M\"obius value of all critical chains that contain the type (2) interval $C(v_i,p_{v_i})$ as their first type (2) interval is
$$\displaystyle (-1)^i\left(\nu(u,p_{v_i})- \nu(u,x(p_{v_i}))\right)$$
because if $x(p_{v_i})$ is not defined, $\nu(u,x(p_{v_i}))=0$.  This completes the proof.
\end{proof}

Using Propositions~\ref{alldes} and~\ref{type2}, we are able to write down a formula for $\mu(u,w)$.  Recall that $\rho$ denotes the rank function in $\P^*$, and $\rho(u,w)=\rho(w)-\rho(u)$.  For simplicity, let $0\leq t\leq2$ be the number of critical chains in $[u,w]$ that consist entirely of strong descents, and define
$$d(u,w)=\begin{cases}
t(-1)^{\rho(u,w)} & \mbox{if } \rho(u,w)>1\\
(-1)^{\rho(u,w)} & \mbox{if } \rho(u,w)\leq1.\\
\end{cases}$$

\begin{Thm}
\label{Mu2}
Suppose $u\leq w$ in the poset $\P^*$.  Then 
$$\displaystyle\mu(u,w)=d(u,w)+\sum(-1)^i\left(\nu(u,p_{v_i})-\nu(u,x(p_{v_i}))\right),$$
where the  sum is over all triples $v_i,p_{v_i},x(p_{v_i})$ such that $p_{v_i}$ is a principal factor of $w$ of degree $i$ with base $v_i$ and primary prefix $x(p_{v_i})$.
\end{Thm}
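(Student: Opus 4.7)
The plan is to apply Theorem~\ref{chainmu} to rewrite $\mu(u,w)$ as a signed sum over critical maximal chains of $[u,w]$, and then to partition these critical chains according to the decomposition provided by Theorem~\ref{JCstruc}. First I dispose of the boundary cases $\rho(u,w)=0$ and $\rho(u,w)=1$ directly from the M\"obius recursion: $\mu(u,u)=1=(-1)^0$ and $\mu(u,w)=-1=(-1)^1$ when $w$ covers $u$, matching $d(u,w)$, while the sum over triples is vacuous since the open interval admits no intermediate word in which a principal factor of $w$ with a base could appear.

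For $\rho(u,w)>1$, I split the critical chains into class (A), those whose Theorem~\ref{JCstruc} decomposition consists entirely of type (1) intervals, and class (B), those containing at least one type (2) interval. Since type (3) intervals may only follow type (2) intervals, this is exhaustive. Each chain in class (A) consists of strong descents throughout $C(w,u)$, so $\#J(C)=\rho(u,w)-1$, hence $d(C)=\rho(u,w)-2$, and the chain contributes $(-1)^{\rho(u,w)}$. Proposition~\ref{alldes} tells us there are at most two such chains, and their count is exactly the parameter $t$ used to define $d(u,w)$, so the total class (A) contribution is $t\cdot(-1)^{\rho(u,w)}=d(u,w)$.

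Class (B) is further partitioned by the first type (2) interval $C(v_i,p_{v_i})$ of each chain. Proposition~\ref{type2} asserts that this first type (2) interval determines a triple $(v_i,p_{v_i},x(p_{v_i}))$ in which $p_{v_i}$ is a principal factor of $w$ of degree $i$ with base $v_i$ and primary prefix $x(p_{v_i})$, and it computes the total of $(-1)^{d(C)}$ over all critical chains sharing such a triple to be $(-1)^i\bigl(\nu(u,p_{v_i})-\nu(u,x(p_{v_i}))\bigr)$. Summing these contributions over all valid triples yields the second term of the formula, and combining classes (A) and (B) gives the claimed expression.

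The main obstacle is the bookkeeping behind the partition: one must verify that every critical chain in class (B) is counted under exactly one triple (which follows because the ``first type (2) interval'' is intrinsic to the chain), and that triples for which no critical chain actually realizes the specified data contribute zero to the sum (which is absorbed into the formula of Proposition~\ref{type2}, whose derivation already handles the cases $u=p_{v_i}$, $u=x(p_{v_i})$, $x(p_{v_i})$ undefined, and $p_{v_i}$ or $x(p_{v_i})$ not dominating $u$ via the convention that $\mu$ and $\nu$ vanish on undefined or non-comparable arguments). Once these checks are in place, the theorem is an immediate consequence of Theorem~\ref{chainmu} and the structural propositions.
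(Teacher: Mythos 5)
Your proposal follows the same strategy as the paper's proof: dispose of the rank-$\leq 1$ cases directly, identify the class-(A) all-strong-descent critical chains with the $d(u,w)$ term via Proposition~\ref{alldes} and Theorem~\ref{chainmu}, and partition the remaining critical chains by their first type (2) interval so that Proposition~\ref{type2} can be applied term by term. Your remark that the ``first type (2) interval'' is intrinsic to each chain, and that the conventions on $\mu$ and $\nu$ absorb the degenerate cases, makes explicit the bookkeeping that the paper leaves implicit, but the underlying argument is the same.
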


\begin{proof} First suppose ${\rho(u,w)}\leq1$.  Then $[u,w]$ cannot have any maximal chains because the open interval is empty or undefined.  So both summations contribute $0$ to the M\"obius value.  By definition of the M\"obius value, when ${\rho(u,w)}=1$, $\mu(u,w)=-1$ and when ${\rho(u,w)}=0$, $u=w$ and $\mu(u,u)=1$.  These are the values that are defined in the formula $d(u,w)$.

Now suppose ${\rho(u,w)}>1$.  If $C$ is a critical chain of $[u,w]$ that consists entirely of strong descents, then $J(C)$ consists of $\rho(u,w)-1$ intervals of length $1$.  Therefore, by Theorem~\ref{chainmu}, $C$ contributes $$(-1)^{\rho(u,w)-2}=(-1)^{\rho(u,w)}$$ to the M\"obius value $\mu(u,w)$.  By Proposition~\ref{alldes}, there are $0$, $1$, or $2$ such chains in any interval $[u,w]$, making the total contribution of all such chains $$t(-1)^{\rho(u,w)},$$ where $t$ is the total number of these chains.  This is the value in the formula for $d(u,w)$.  Thus, by adding the contribution of all critical chains that do not consist entirely of strong descents to $d(u,w)$, we will get a formula for the M\"obius value.

If a critical chain $C$ in $[u,w]$ does not consist entirely of strong descents, then by Theorem~\ref{gMSIchar}, $C$ must contain an interval caused by a principal factor.  By Proposition~\ref{type2}, the first such interval is caused by a principal factor of some degree $i$.  Let $C(v_i,p_{v_i})$ be this interval, where $i$ is the degree of the principal factor $p_{v_i}$, and consider the set $S$ of all critical chains that contain this interval as the first type (2) interval.  Then Proposition~\ref{type2} implies the contribution of the chains in $S$ to the M\"obius value is $$(-1)^i\left(\nu(u,p_{v_i})- \nu(u,x(p_{v_i}))\right),$$ where $x(p_{v_i})$ is the primary prefix of $p_{v_i}$.  This is precisely the term that appears in the given formula for the triple $v_i,p_{v_i},x(p_{v_i})$.  By Theorem~\ref{JCstruc}, summing over all such triples yields a summation which gives the contribution of all critical chains containing at least one type (2) interval.  Thus, the M\"obius value can be found by adding $d(u,w)$ to this last summation, completing the proof.
\end{proof}

We close this section with several example M\"obius function calculations using the above formula.
\vskip-1\baselineskip

\begin{align*}
\mu(121,1221)&=d(121,1221)+\nu(121,121)\\
&=2+1\\
&=3\\
&\\
\mu(2,2212)&=d(2,2212)+\nu(2,211)-\nu(2,2)\\
&=0+\mu(2,211)+\mu(2,21)+\mu(2,2)-\mu(2,2)\\
&=0+0-1+1-1\\
&=-1\\
&\\
\mu(2,3121)&=d(2,3121)+0\\
&=0
\end{align*}

For larger examples, it is helpful to collect like terms and eliminate coefficients.  In the example below, $3111$ has $31$ as its primary prefix for one degree $0$ and one degree $1$ base, while it has $3$ as its primary prefix for one degree $1$ base.

\begin{align*}
\mu(3,33133)=&0+(1-2+1)\nu(3,3111) -(1-1)\nu(3,31)-(-1)\nu(3,3)\\
& +(1-2+1)\nu(3,3112)+\nu(3,3113)+\nu(3,33)\\
=&\mu(3,3) +\mu(3,3113) +\mu(3,33)\\
=&1+\mu(3,3)+\mu(3,3)\\
=&3\\
\end{align*}

A quick investigation of the formula yields the coefficient for each term $\nu(u,v)$ is dependent on how many times $v$ occurs as a principal factor of odd versus even degree, or how many times $v$ occurs as the primary prefix of principal factors of odd versus even degree.  It is not possible for the primary prefix $x=x(p_{v_i})$ to be a principal factor of any degree.  Indeed, its principal index is the same as the principal factor $p_{v_i}=x1^m$ of $w$ so that there are at least $3$ embeddings of $x$ in $w$.  So we would need to remove the middle embedding from $w$ via strong descents before removing the suffix embedding.  This is impossible because the suffix embedding starts at a later index than any other embedding. 

\begin{Rem}
We have noticed that there is often a clear relationship between the odd and even counts, such as a binomial sum, resulting in an unusually high number of terms $\nu(u,v)$ with coefficient zero.  However, a precise description of when the coefficients are zero has eluded us.  For more comments on this, see the last section on open problems.\end{Rem}

\section{Generalized Factor Order on Trees and Forests}
\label{gfoTF}

A \emph{tree} is a poset for which the undirected graph underlying its Hasse diagram is connected and has no cycles, and a \emph{forest} is a disjoint union of trees.  A \emph{rooted tree} $T$ is a tree with a unique minimal element.  We will show in this section that our results generalize to the case of a \emph{rooted forest} $F$, which is a disjoint union of rooted trees.

Suppose $T$ is a rooted tree, and let $r$ be its root.  Since $T$ has no cycles and every element $s$ satisfies $s\geq r$, it follows that every element except the root covers a unique element.  This is why we are considering generalized factor order on these posets.  It can be shown the M\"obius function of generalized factor order on $T^*$ is similar to that of $\P^*$, and the proofs leading to the result are nearly identical.  For this reason, we have chosen not to consider this case separately from the rooted forest case.

Suppose $F$ is a rooted forest.  Like the rooted tree case, in a rooted forest, every nonminimal element covers a unique element.  However, there are multiple minimal elements in this poset.  This leads us to suspect that we need to combine the results of Section 2 with Theorem~\ref{Mu2}.  While this is largely true, we will see that the definition of principal factor does not translate quite as expected, and that having multiple minimal elements complicates several results from Section 3.

To be consist with Section 2, we say a flat word in the Kleene closure $F^*$ is a sequence of $m$'s, where $m$ is a minimal element.  Since each minimal element is the root of a tree, we say a word is \emph{rooted} if it consists entirely of minimal elements .  Note that a flat word is also rooted.  The following lemma states that the covering relations of $F^*$ are analogous to those of $\P^*$.  It's proof is similar to that of Lemmas~\ref{cover} and~\ref{gcover}. 

\begin{Lem}
\label{FTcover}
A word $w=w(1)\ldots w(n)$ in $F^*$ can cover up to $n$ words, each formed by reducing a letter in $w$ to the unique letter it covers, where reducing a minimal element means removing it from the word.  Reducing $w(1)$ and $w(n)$ will always produce a factor, while reducing $w(i)$ for $1< i< n$ can only produce a new factor if $w(i)$ is nonminimal. These words are distinct unless $w$ is flat, in which case $w$ only covers one word which is flat.\qed
\end{Lem}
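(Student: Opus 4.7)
The plan is to follow the same strategy as in Lemmas~\ref{cover} and~\ref{gcover}, adapted to the rooted-forest setting. I would first observe that $F^*$ carries a rank function $\rho(w)=\sum_{i=1}^{|w|}\rho_F(w(i))$, where $\rho_F(s)$ is the height of $s$ in its tree of $F$ (with each root at height $0$); this is well defined because each non-root element of $F$ covers a unique element, so each tree of $F$ is ranked in the usual sense. Consequently, if $w$ covers $u$ in $F^*$, then $\rho(u)=\rho(w)-1$, which constrains the possible covering candidates sharply.

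Next, I would classify candidates $u$ covered by $w$ according to whether $|u|=|w|$ or $|u|<|w|$. In the first case, $u$ is a factor of $w$ at the unique embedding of length $|w|$, so $u(j)\leq w(j)$ in $F$ for every $j$; since the total drop $\rho(w)-\rho(u)$ equals $1$, the word $u$ must differ from $w$ at exactly one position $i$ by a single cover in $F$, and because $w(i)$ covers a unique element of $F$ (when $w(i)$ is non-minimal), this determines $u$ entirely. In the second case, $u\leq w$ is witnessed by some consecutive subword of $w$ of length $|u|$; the rank-gap condition forces $|u|=|w|-1$ and $u$ to coincide with the witnessing subword, and the only length-$(|w|-1)$ consecutive subwords of $w$ are the prefix $w(1)\ldots w(n-1)$ and the suffix $w(2)\ldots w(n)$. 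Removing an interior letter therefore cannot yield a factor, which justifies the restriction that interior reductions only occur when $w(i)$ is non-minimal (so the reduction changes the letter in place rather than deleting it).

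Finally, I would handle distinctness and the flat case. If $w$ is not flat, two candidate covers arising from reducing distinct positions $i\neq j$ are easily seen to be distinct: either they have different lengths (when one reduction is a deletion at an endpoint and the other an in-place step at an interior non-minimal letter), or they have the same length but disagree at positions $i$ or $j$, using that a non-minimal $w(i)$ strictly dominates the unique element it covers. If $w$ is flat, every letter is minimal, so interior reductions are forbidden and the only candidates come from deleting $w(1)$ or $w(n)$; both of these deletions yield the same flat word of length $n-1$, so $w$ covers exactly one word, which is again flat.

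The main obstacle, such as it is, is verifying the interior-removal obstruction cleanly; but this reduces to the single observation that the only consecutive subwords of $w$ of length $n-1$ are the prefix and the suffix, so no reordering of the remaining letters could witness a factor relation. Everything else is bookkeeping with the rank function $\rho$ and the fact that each non-minimal element of $F$ covers a unique element.
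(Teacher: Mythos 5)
Your strategy is the right one, and it matches what the paper intends here: the paper omits the proof of Lemma~\ref{FTcover} altogether, remarking only that ``its proof is similar to that of Lemmas~\ref{cover} and~\ref{gcover},'' and your argument is exactly that kind of adaptation. That said, there are two places that need attention. First, your rank function $\rho(w)=\sum_i\rho_F(w(i))$ with roots at height $0$ is not compatible with the covering relation: deleting a root from $w$ leaves $\rho(w)$ unchanged, so the claim ``if $w$ covers $u$ then $\rho(u)=\rho(w)-1$'' fails for exactly the deletions your argument most needs to control. The fix is standard and matches what the paper uses later: measure rank in $\hat{F}=F\cup\{\hat{0}\}$ (so every root has rank~$1$), equivalently take $\rho(w)=|w|+\sum_i\rho_F(w(i))$. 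With that correction your rank-gap computation for the case $|u|<|w|$ does force $|u|=|w|-1$, $u$ equal to the witnessing prefix or suffix, and the missing endpoint letter minimal.

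Second, the sentence ``Removing an interior letter therefore cannot yield a factor'' overstates the conclusion. Removing an interior minimal letter can produce a factor of $w$: if $F$ has root $m$ and a larger element $a$, take $w=amm$ and delete the middle $m$ to get $am$, which is a prefix and hence a factor of $w$. What your rank argument actually establishes, and what the lemma's phrase ``new factor'' is pointing at, is that any such factor must coincide with the length-$(n-1)$ prefix or suffix, and in that case $w(i)=w(i+1)=\cdots=w(n)$ (or the symmetric condition), so the same word is already produced by deleting the endpoint $w(n)$ (resp.\ $w(1)$). In other words, interior deletions never add to the list of covered words; they either fail to give a factor or duplicate a boundary deletion. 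Once this is phrased correctly, together with your distinctness argument and flat-word case (both of which are fine), the proof is complete.
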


Note that a minimal element $m$ cannot be reduced unless it is at the beginning or end of a word.  We maintain the convention that if a word is flat, only the first $m$ can be reduced.  This allows us to maintain the notion of a reducible letter $w(i)$.

Suppose we have a distinguished symbol $\hat{0}$ and $\hat{0}\notin F$.  Define $\hat{F}$ to be the poset $F$ with $\hat{0}$ added as the unique minimal element.  This allows us to maintain the definition of expansion from the previous sections, that is, a word $\eta\in \hat{F}$ is an expansion of $u\in F$ if $\eta\in \hat{0}^*u\hat{0}^*$.

Let $[u,w]$ be an interval in $F^*$.  Let $C:w=v_0\stackrel{l_1}{\rightarrow}v_1\stackrel{l_2}{\rightarrow}\ldots\stackrel{l_{n-1}}{\rightarrow} v_{n-1}\stackrel{l_n}{\rightarrow}v_n=u$ be a maximal chain in $[u,w]$, where the $l_i$ are defined by the corresponding sequence of embeddings $\eta_{{v_i}}$ in the sense that
$$\eta_{v_i}(l_i)=s \text{ where } \eta_{v_{i-1}}(l_i)\rightarrow s\text{ and } \eta_{v_i}(j)=\eta_{v_{i-1}}(j) \text{ when } j\neq l_i.$$
This gives each maximal chain $C$ a chain id $l_1\ldots l_n$ .  This chain id is unique because every element in $\hat{F}$ except $\hat{0}$ covers a unique element.  Notice this is the most general class of posets in which every interval ordered by generalized factor order has maximal chains with unique chain ids of this form.  By lexicographically ordering the chain ids, we get a poset lexicographic order on the maximal chains of $[u,w]$ which we will use to find the MSIs in the rooted forest case.

Let $\eta$ be an embedding of $u$ into $w$.  Let $m_i=\rho(\eta(i))-\rho(w(i))$, where $\rho$ is the rank function in $\hat{F}$.  This allows us to maintain the idea of an admissible permutation of the multiset $M_{\eta}=\{1^{m_1},2^{m_2},\ldots\}$ from the previous section.

By Lemma~\ref{FTcover}, the characterization of chain ids ending at nonflat words does not change.  Unfortunately, since both rooted and unrooted words can cover flat words, it is not possibly to write down a useful characterization of chain ids ending at flat words.  However, since flat words can only be reduced to flat words, we will be able to deal with this case separately.

\begin{Prop}
\label{Fpermchar}
Suppose $F$ is a rooted forest, and $u$ and $w$ are two elements in $F^*$ satisfying $u\leq w$.  Let $\eta$ be an embedding of $u$ into $w$, $m_i=\rho(w(i))-\rho(\eta(i))$, and $M_{\eta}=\{1^{m_1},2^{m_2},\ldots\}$.

If $u$ is not flat, then a sequence of numbers is the chain id for a maximal chain in $[u,w]$ ending at $\eta$ if and only if it is an admissible permutation of the multiset $M_{\eta}$.\qed
\end{Prop}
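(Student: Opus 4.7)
The plan is to mirror the proof of Proposition~\ref{permchar} essentially verbatim, with the only adjustment being that each reduction now corresponds to a single cover in $\hat{F}$ rather than subtraction by one. The argument naturally splits into two implications: that any chain id is a permutation of $M_\eta$ (trivial), and that admissibility is equivalent to the intermediate words being legitimate factors of $w$ containing $u$.

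First I would observe that for any maximal chain $w = v_0 \to v_1 \to \cdots \to v_n = u$ ending at $\eta$, each step drops exactly one letter by exactly one cover in $\hat{F}$. Since position $i$ must undergo $m_i = \rho(w(i)) - \rho(\eta(i))$ total reductions, the chain id is a permutation of $M_\eta$ with no further constraints from this count alone. So it remains to characterize which such permutations $l_1 \cdots l_n$ actually correspond to chain ids, which is equivalent to asking that every partial-reduction word $v_j$ obtained by applying $l_1, \ldots, l_j$ be a factor of $w$ containing $u$.

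For the forward direction, the key input is that $u$ is not flat, so every $v_j$ (which contains $u$) is also not flat. By Lemma~\ref{FTcover}, this forces every \emph{interior} reduction in $v_j$ to act on a nonminimal letter, so the only place a minimal letter can be reduced (i.e., removed) is at the left or right end of $v_j$. For any position $i < f$ we have $\eta(i) = \hat{0}$, so position $i$ must eventually be removed; the step removing it is precisely the last occurrence of the label $i$ in the chain id. For this removal step to be legal, positions $1,\ldots,i-1$ must already have been removed, i.e.\ the last $i$ must appear after the last $1, 2, \ldots, i-1$ but before the last $i+1$. The symmetric argument on the right end gives the condition on $\ell+2 \leq j \leq |w|$. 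Together these constraints are exactly admissibility.

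For the reverse direction, I would build the chain step-by-step from an admissible permutation. Each label $l_k$ either lies in $[f,\ell]$, in which case the corresponding reduction just lowers a letter that never reaches $\hat{0}$ (so the result is a factor of $w$ still containing $u$, and Lemma~\ref{FTcover} certainly permits the step whether or not the letter is interior), or it lies outside $[f,\ell]$, in which case admissibility guarantees that by the time we apply the \emph{last} occurrence of $l_k$ the position is at the boundary of the current word, making the removal legal. Intermediate reductions at positions outside $[f,\ell]$ (before the last one) are nonminimal interior reductions, which are also allowed. The hardest piece is really just unpacking why admissibility is equivalent to the correct boundary structure for final removals, but this is the same bookkeeping that appeared in the $\P^*$ case, so no new difficulty arises.
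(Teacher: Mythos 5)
Your proposal is correct and takes the same route the paper implicitly intends: the paper omits the proof of Proposition~\ref{Fpermchar} precisely because it is the direct adaptation of the proof of Proposition~\ref{permchar}, which you spell out—replacing subtraction by one with a single cover in $\hat{F}$ and using the non-flatness of $u$ (inherited by every $v_j$) together with Lemma~\ref{FTcover} to argue that minimal letters can only be stripped from the left and right ends, so the last occurrences of labels below $f$ (resp.\ above $\ell$) must occur in increasing (resp.\ decreasing) order of position, i.e.\ exactly the admissibility condition. Your observation that the only constraints live on the \emph{last} occurrence of each off-support label, with all earlier occurrences being nonminimal hence unconditionally legal, is the key bookkeeping step, and you carry it out correctly.
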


Since there are multiple minimal elements, we need to reconcile our previous classifications of MSIs in the positive integer and antichain cases.  To begin considering MSIs in the new setting, we need to once again identify all intervals $[u,w]$ in which a chain $C$ has $C(w,u)$ as an MSI.

Recall from Section 2 that descents always caused MSIs in the antichain case when they were strong descents, that is, $l_{i+1}<l_i-1$.  In the context of $\P^*$, a strong descent satisfied $v_{n-1}\neq v_{n+1}11$.  These conditions are analogous.  So we call any descent $v_n$ that does not remove two minimal elements from the back of a word a \emph{strong descent}, that is, $v_{n-1}\neq v_{n+1}mn$ for any minimal elements $m$ and $n$.  The next proposition states that a strong descent causes a length $1$ MSI.  Since its proof is essentially the same as that of Proposition~\ref{gdescent}, we omit it.

\begin{Prop}
\label{Fdescent}
Suppose $[u,w]\subset F^*$ and $C:w=v_0\stackrel{l_1}{\rightarrow}v_1\stackrel{l_2}{\rightarrow}\ldots\stackrel{l_{n-1}}{\rightarrow} v_{n-1}\stackrel{l_n}{\rightarrow}v_n=u$ is a maximal chain in $[u,w]$. If $v_i$ is a strong descent, then $v_i$ is a length 1 MSI.\qed
\end{Prop}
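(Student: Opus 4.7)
The plan is to mimic the structure of the proof of Proposition~\ref{gdescent}, stratifying the argument by $|v_{i-1}|-|v_{i+1}|\in\{0,1,2\}$ (the only possibilities since each cover step either removes one letter or reduces a nonminimal letter in place). In each case I will either produce a lexicographically earlier chain agreeing with $C$ off of $v_i$, or invoke a same-length result analogous to Corollary~\ref{msisame} (which transfers verbatim from $\P^*$ to $F^*$ because when $|v_{i-1}|=|v_{i+1}|$ the relevant sub-interval is isomorphic to a product of chains in $\hat{F}$ and the proof uses only that reducing one position commutes with reducing a different position).

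First I would handle $|v_{i-1}|=|v_{i+1}|$: here no letter is deleted, so the positions $l_i$ and $l_{i+1}$ are distinct and the two single-letter reductions commute, producing a valid chain with the smaller label first; this makes $v_i$ a length 1 skipped interval, hence an MSI by minimality. Second, in the case $|v_{i-1}|=|v_{i+1}|+1$, exactly one of $l_i, l_{i+1}$ corresponds to deleting a minimal letter at a boundary position of $v_{i-1}$. If $l_{i+1}$ deletes an initial minimal letter, Proposition~\ref{Fpermchar} lets me swap $l_i$ and $l_{i+1}$ to get a strictly earlier chain. If $l_i$ deletes a terminal minimal letter and $v_{i+1}$ is not flat, the same swap works. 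If $v_{i+1}$ is flat and $l_i$ deletes a terminal minimal letter, then the strong-descent hypothesis $v_{i-1}\neq v_{i+1}mn$ forces $\eta_{v_{i-1}}(l_{i+1})$ to be a nonminimal letter, so I can first reduce that nonminimal letter (producing some intermediate word $v'$), then proceed along the unique chain of flat reductions, ending with an extra deletion at position $l_n+1$; this yields a lexicographically earlier chain $D$ with $C-v_i\subset D$.

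Third, the case $|v_{i-1}|=|v_{i+1}|+2$ is where the strong-descent hypothesis does the real work. Here both $l_i$ and $l_{i+1}$ delete letters, so $v_{i+1}$ is not flat (a flat word only permits one reduction), and the two deleted letters must both sit at boundary positions of $v_{i-1}$. Thus $v_{i-1}$ is either of the form $m\, v_{i+1}\, n$ for minimal letters $m,n$ at both ends, or of the form $v_{i+1}mn$ (or symmetrically $mn\,v_{i+1}$). In the first form, Proposition~\ref{Fpermchar} again allows the swap of $l_i$ and $l_{i+1}$, producing an earlier chain. The forms $v_{i+1}mn$ and $mn\,v_{i+1}$ are precisely what the strong-descent condition excludes, so this case does not arise.

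The main obstacle, as in the $\P^*$ proof, is the bookkeeping in the $|v_{i-1}|=|v_{i+1}|+1$ subcase with $v_{i+1}$ flat: I have to construct the earlier chain $D$ explicitly and verify it is an admissible label sequence according to Proposition~\ref{Fpermchar}. The complication relative to $\P^*$ is that ``flat'' now depends on which minimal element appears, and multiple minimal elements sit at the bottom of $\hat{F}$; however, since all reductions in a flat-to-flat descent stay within a single tree of $F$ (a minimal letter can only be obtained by reducing along one fixed chain of covers in $\hat{F}$), the argument localizes to one tree and the $\P^*$ construction carries over.
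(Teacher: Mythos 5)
Your stratification by $|v_{i-1}|-|v_{i+1}|\in\{0,1,2\}$ is precisely the argument the paper has in mind: the paper omits the proof of Proposition~\ref{Fdescent} on the grounds that it is ``essentially the same as that of Proposition~\ref{gdescent},'' and you are faithfully following the proof of that proposition, including the transfer of Corollary~\ref{msisame} to the same-length case, the two swap subcases in the one-deletion case, the flat-word detour chain $D$, and the role of the strong-descent hypothesis in the two-deletion case. So the approach and the conclusion are right.

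Two of your attributions deserve a correction, because the conclusions you draw are true but for the wrong reasons, and the errors obscure what ``strong descent'' is actually responsible for. First, in the subcase $|v_{i-1}|=|v_{i+1}|+1$ with $v_{i+1}$ flat and $l_i$ a terminal deletion, you claim the strong-descent hypothesis $v_{i-1}\neq v_{i+1}mn$ forces $\eta_{v_{i-1}}(l_{i+1})$ to be nonminimal. In that subcase the hypothesis is vacuous, since $v_{i+1}mn$ has length $|v_{i+1}|+2\neq |v_{i-1}|$. The nonminimality follows from length bookkeeping (a minimal letter is only reducible at a boundary and reducing it is a deletion, which would give $|v_{i+1}|=|v_{i-1}|-2$), or equivalently, as the paper argues, from the observation that a minimal $\eta_{v_{i-1}}(l_{i+1})$ would make $v_{i-1}$ flat, making a terminal deletion $l_i$ impossible under the flat-word convention. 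Second, in the subcase $|v_{i-1}|=|v_{i+1}|+2$ you say the forms $v_{i+1}mn$ and $mn\,v_{i+1}$ are both excluded by the strong-descent hypothesis. The hypothesis excludes only $v_{i+1}mn$; the form $mn\,v_{i+1}$ is incompatible with $v_i$ being a descent at all, since the letter in the second position of $v_{i-1}$ is interior and cannot be deleted first, so the admissible deletion order gives $l_i<l_{i+1}$. Finally, a cosmetic point about your ``localizes to one tree'' remark: the minimal letter deleted at position $l_i$ may lie in a different tree from the minimal letter of the flat word $v_{i+1}$, since the two positions of $w$ may belong to different components of $F$. This does no damage to the proof---when the two roots differ, the intermediate word $v'$ is rooted but not flat, so the straightforward swap of $l_i$ and $l_{i+1}$ already yields a lexicographically earlier chain, and the flat-word detour construction is only needed when the two roots coincide.
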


Notice this statement is not an if and only if, as in the antichain case outer word MSIs can reduce consecutive letters from the back.  Indeed, if $1$ and $a$ are minimal elements in $F$, the interval $[1a,1a1a]$ contains a length $1$ MSI.  The presence of length $1$ MSIs containing weak descents is one of the key differences between the antichain and positive integer cases.

While the idea of a maximal outer factor still holds in the new context, simple examples reveal that the previous definition of a principal factor will not be sufficient for the forest case.  For example, suppose $F$ is disjoint union of two chains, $b\rightarrow a $ and $2\rightarrow 1$.  Then in the interval $[b,bb1]$, the chain $C$ with chain id $232$ has $C(bb1,b)$ as an MSI, even though $b$ is not a suffix of $bb1$. A quick analysis reveals that $b$ behaves like the principal factors from the previous section because the MSI results from a unimodal chain id which is the lexicographically first id leading to the prefix embedding of the word $b$.

Deeper analysis shows that words causing MSIs in this manner can have multiple embeddings in $w$.  The smallest example we could find of this is in the interval $[21a22, 21a221a22a22]$. In spite of the fact $21a22$ has three embeddings in the larger word, there is a maximal chain in which $C(21a221a22a22,21a22)$ is an MSI.  Fortunately, the definition of a principal factor can still be generalized to fit the new context so that once again, we will have exactly two types of MSIs in $F^*$.

Let $p$ be a word in $F^*$ and let $w$ be a word in $F^*$ that is not flat.  Suppose that $p$ is a prefix of $w$ with other embeddings in $w$, and no longer prefix containing $p$ has multiple embeddings in $w$.  Then there is a smallest index $i$, called the principal index of $p$ in $w$, such that $w(i)>p(i)$ and $w(i)$ is reducible.  We say $p$ is a \emph{principal factor} of $w$ if the word produced by reducing $w(i)$ contains only the prefix embedding of $p$.  

Notice this definition accounts for both of the examples given before it.  As in the previous cases, the principal index of a principal factor must take a value greater than $1$.  Before proceeding, it is important to understand why this definition includes both outer words and the Section 3 definition of a principal factor as special cases.  In the antichain case, an outer word $o(w)$ of a nonflat word is a principal factor when its principal index is $|w|$ because only $1$ and $|w|$ are reducible in the case of an antichain. Since reducing $|w|$ can only remove the suffix embedding of a word, we must have $o(w)\not\leq i(w)$ in order for it to be a principal factor, where $i(w)$ is the inner word.  This provides further insight into this condition of Bj\"orner's formula.  

To see that the definition of a principal factor from Section 3 is generalized by the new one, first note that a principal factor of an unrooted word $w$ cannot be rooted.  Indeed, if $w$ contains a nonminimal element, then the principal index of any rooted prefix $p$ will point to the first nonminimal element.  So reducing this element cannot possibly remove any embeddings of $p$.  Furthermore, the new definition guarantees $p$ is a suffix in the case of $\P$.  If $p$ has an embedding in $w$ which is neither prefix nor suffix, the word $p1$ would have a prefix embedding and another embedding, contradicting the maximality of $p$.  While it is more difficult to identify principal factors when they do not have a suffix embedding, this generalization clearly shows which properties of principal factors cause MSIs.

\begin{Prop}
\label{Fprincipal}
Suppose $u$ is a principal factor of $w$ with principal index $i$.  Let $C:w=v_0\stackrel{i}{\rightarrow}v_1\stackrel{l_2}{\rightarrow}\ldots\stackrel{l_{n-1}}{\rightarrow} v_{n-1}\stackrel{l_n}{\rightarrow}v_n=u$ be the lexicographically first chain in $[u,w]$ with $l_1=i$.  Then $C(u,w)$ is an MSI of $C$.
\end{Prop}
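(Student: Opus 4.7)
The plan is to adapt the proof of Proposition~\ref{gprincipal} to the rooted forest setting, reframing arguments about the suffix embedding in terms of arbitrary non-prefix embeddings. First, I would invoke the definition of a principal factor: reducing $w(i)$ at the principal index produces $v_1$ whose unique embedding of $u$ is the prefix embedding. Consequently, every intermediate word $v_k$ of $C$ (for $1 \leq k \leq n-1$), being obtained from $v_1$ by further reductions, continues to carry $u$ at the prefix of its embedding in $w$, since reductions can only destroy embeddings rather than create new ones.

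Next, I would show that chains lexicographically earlier than $C$ exist. Because $u$ has multiple embeddings in $w$ by the definition of principal factor, and because the principal index satisfies $i > 1$, Proposition~\ref{Fpermchar} yields a maximal chain in $[u, w]$ ending at a non-prefix embedding of $u$, whose first label is strictly smaller than $i$. Now consider an arbitrary $C' < C$; since $C$ is the lex first chain with $l_1 = i$, we must have $l'_1 < i$. The definition of principal index guarantees that every reducible position $j < i$ satisfies $w(j) = u(j)$, so the reduction performed at position $l'_1$ drops the corresponding letter strictly below $u(l'_1)$, destroying the prefix embedding of $u$ in $v'_1$. Subsequent reductions in $C'$ cannot restore it, so no intermediate word of $C'$ realizes $u$ at the prefix.

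Finally, I would combine these observations to conclude that $C \cap C' = \{w, u\}$ for every $C' < C$, which forces $C - C(w, u) \subseteq C'$ and hence exhibits $C(w, u)$ as a skipped interval of $C$. Minimality of $C(w, u)$ follows because any proper subinterval $(v_j, v_k) \subsetneq (w, u)$ would leave some intermediate word $v_\ell$ of $C$ outside the skip, forcing $v_\ell$ into some chain lex earlier than $C$, contrary to the preceding analysis. The main obstacle I anticipate is rigorously showing that intermediate words of $C$ and $C'$ differ as elements of $F^*$ and not merely as embeddings into $w$: a single word can admit multiple embeddings in $w$, so the argument must leverage the canonical structure of the lex first chain (via an analog of Lemma~\ref{flc} for rooted forests) to rule out the possibility that a word from $C$ appears in $C'$ via a distinct embedding.
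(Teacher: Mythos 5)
Your high-level plan mirrors the paper's and you correctly flag the one genuine difficulty of the forest case: showing that $C\cap C'=\{w,u\}$ as sets of words in $F^*$, not merely that the two chains carry different embeddings. But you leave that as an ``anticipated obstacle'' rather than resolving it, and the resolution is precisely where the forest case earns its keep. The paper closes the gap by combining two facts. First, since $C$ is the lexicographically first chain ending at the prefix embedding of $u$ (equivalently, the lex first chain with $l_1=i$), Lemma~\ref{Fflc} gives $v_{n-1}=um$ for a minimal element $m$, so every word of $C$ strictly between $w$ and $u$ contains $um$ as a factor. Second, $v_1'$ cannot contain $um$: if it did, then $um$ (a prefix of $w$ of length $|u|+1$) would have at least two embeddings in $w$ (its prefix embedding and another inherited from $v_1'\le w$), contradicting the clause in the forest-case definition of principal factor that no longer prefix of $w$ containing $u$ has multiple embeddings in $w$. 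That clause is exactly what replaces the suffix-embedding reasoning used in the $\P^*$ proof of Proposition~\ref{gprincipal}, and without invoking it your argument cannot rule out a word of $C$ reappearing in $C'$ via a different embedding. Your mention of Lemma~\ref{Fflc} points at the right tool, but the proposal never connects it to the redefined principal factor, so the central step is still missing.
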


\begin{proof}
Using the same argument from the beginning of the proof of Proposition~\ref{gprincipal}, we conclude there exist chains that are lexicographically earlier than $C$.  Also from this proof, if $C'$ is an arbitrary maximal chain that is lexicographically earlier than $C$, then $l'_1<i$ and $v'_1$ must contain an embedding of $u$ which is not prefix.

Furthermore, since $C$ is the lexicographically first chain ending at the prefix embedding of $u$, we reduce the minimal letters at the end last, implying $v_{n-1}=um$ for some minimal element $m$.  If $v_1'$ contained $um$, $u$ would be contained in a longer prefix with multiple embeddings in $w$, contradicting the fact that $u$ is a principal factor of $w$.  Thus, the only words common to $C$ and $C'$ are $w$ and $u$.   Since $C'$ was an arbitrary maximal chain in $[u,w]$ with $l'_1<i$, and $C$ is the first maximal chain in $[u,w]$ with $l_1=i$, we conclude $C(u,w)$ is an MSI.
\end{proof}

If $u$ is not flat, the \emph{first admissible} chain ending at $\eta$, $C_{\eta}$, is the maximal chain whose chain id is the lexicographically first permutation of $M_{\eta}$ that is the chain id of a maximal chain.  This chain has the same structure it did in the case of $\P$.

\begin{Lem}
\label{Fflc}
Suppose $[u,w]\subset\P^*$ and $\eta$ is an embedding of $u$ into $w$.  Let $\ell$ be the index of the largest non-zero number in $\eta$.  If $u$ is not flat, $C_{\eta}$ has as its chain id the unique unimodal permutation of $M_{\eta}$ with decreasing suffix $|w|, |w|-1, \ldots, \ell+1$. \qed
\end{Lem}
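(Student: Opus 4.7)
The plan is to apply Proposition~\ref{Fpermchar} and adapt the argument of Lemma~\ref{flc}. Since $u$ is not flat, Proposition~\ref{Fpermchar} states that the chain ids of maximal chains in $[u,w]$ ending at $\eta$ are precisely the admissible permutations of $M_\eta$. Therefore $C_\eta$'s chain id is the lexicographically smallest admissible permutation of $M_\eta$, and our task reduces to identifying this permutation.

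First, I would note uniqueness of a unimodal permutation with the specified decreasing suffix: once the suffix $|w|, |w|-1, \ldots, \ell+1$ is removed, the remaining multiset has a unique weakly increasing arrangement, so a unimodal permutation with the given suffix is unique. Next, I would verify that this candidate permutation is admissible. The decreasing tail places the last occurrences of the values $\ell+1, \ldots, |w|$ in positions increasing as their value decreases, satisfying the second admissibility condition. For the first condition, the weakly increasing prefix places the last occurrences of $1, \ldots, f-1$ in the correct increasing order, since each of these values has all of its copies in the prefix (positions $1, \ldots, f-1$ lie outside the support of $\eta$, but this has no bearing on the suffix as $f-1 < \ell+1$).

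Third, I would establish lex-minimality via a direct structural argument. The second admissibility condition forces at least one copy of each $j \in \{\ell+1, \ldots, |w|\}$ to serve as the ``last $j$'' at positions arranged in the forced decreasing order of $j$. Pushing these obligatory ``last'' copies to the very end of the permutation is the rightmost valid placement, which minimizes the lex ordering by keeping the earliest positions filled with the smallest values; among remaining entries, the weakly increasing arrangement is then clearly lex-smallest.

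The main obstacle is handling the case where $M_\eta$ contains multiple copies of some large value $j \in \{\ell+1, \ldots, |w|\}$, where one must verify the non-last copies belong in the weakly increasing prefix rather than being repositioned near the suffix. This is handled by a straightforward exchange argument: in any candidate admissible permutation, swapping such a copy with any strictly smaller entry appearing to its left produces a lex-smaller sequence that remains admissible (the last-occurrence positions of all values are unchanged, as the copy being moved is not a last occurrence), contradicting lex-minimality unless all non-last copies lie in weakly increasing position to the left of every forced ``last'' copy.
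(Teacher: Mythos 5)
The paper states this lemma with a \verb|\qed| and no separate argument, relying on the preceding informal description of the first admissible chain (and on the analogy with Lemma~\ref{flc}); your attempt to supply a formal proof via Proposition~\ref{Fpermchar} is a reasonable plan, and your uniqueness and admissibility checks for the unimodal candidate are fine. However, the lex-minimality step, which is the crux, has a genuine gap. You claim that swapping a non-last copy of a large value with a strictly smaller entry appearing \emph{to its left} produces a lex-\emph{smaller} permutation; this is backwards, since moving the larger value leftward increases the sequence in lex order. Even if one corrects the direction to ``to its right,'' the parenthetical claim that last-occurrence positions are unchanged fails: only the moved copy of the large value is guaranteed to be a non-last occurrence, while the smaller partner of the swap may be the last occurrence of a constrained value, and its last-occurrence position changes under the swap. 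Concretely, take $w=12121$, $u=2$, $\eta=02000$ in $\P^*$, so $f=\ell=2$, $M_\eta=\{1,3,4,4,5\}$, and the constraints require last $5$ before last $4$ before last $3$. The unimodal candidate is $1,4,5,4,3$. Swapping the non-last $4$ at position $2$ with the strictly smaller $3$ at position $5$ yields $1,3,5,4,4$, which is lex-smaller but not admissible (last $4$ is now after last $3$), and both the last-$4$ and last-$3$ positions have moved. So the exchange step as stated neither preserves admissibility nor keeps last occurrences fixed, and it cannot establish lex-minimality. A correct argument should instead proceed greedily: show that at each step the smallest remaining value can be placed so long as enough positions remain to accommodate one ``last'' copy of each of $\ell+1,\dots,|w|$ in the forced decreasing order, and that this greedy rule reconstructs exactly the unimodal permutation with the specified decreasing suffix.
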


The following Theorem completes the characterization of the MSIs and states they are once again caused by strong descents or principal factors.

\begin{Thm}
\label{FMSIchar}
Suppose $[u,w]\subset F^*$, $u$ is not the empty word, and $C:w=v_0\stackrel{l_1}{\rightarrow}v_1\stackrel{l_2}{\rightarrow}\ldots\stackrel{l_{n-1}}{\rightarrow} v_{n-1}\stackrel{l_n}{\rightarrow}v_n=u$ is a maximal chain in $[u,w]$.  Then $C(v_i,v_j)$ is an MSI of $C$ if and only if $C(v_i,v_j)$ consists of a strong descent, meaning $v_i\neq v_j mn$ for any minimal elements $m$ and $n$, or $v_{j}$ is a principal factor of $v_i$, $l_{i+1}$ is the principal index of $v_j$ with respect to the embedding $\eta_{v_j}$,  and $C[v_i,v_j]$ is the first admissible chain in $[v_j,v_i]$ ending at the prefix-embedding of $v_j$.
\end{Thm}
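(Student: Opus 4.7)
The plan is to mirror the proof of Theorem~\ref{gMSIchar} in the rooted forest setting, substituting Propositions~\ref{Fdescent} and~\ref{Fprincipal} for their $\P^*$ analogues. The reverse implication is immediate from these two propositions. For the forward direction, the usual reduction shows that $C(v_i,v_j)$ is an MSI of $C$ in $[u,w]$ if and only if it is an MSI of $C[v_i,v_j]$ inside the subinterval $[v_j,v_i]$, so one may assume $w=v_i$ and $u=v_j$ and that $C(w,u)$ is an MSI that does not consist of a single strong descent.

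First I would analyze the descent structure. Proposition~\ref{Fdescent} forces every descent $v_k$ inside $C(w,u)$ to be weak, that is, $v_{k-1} = v_{k+1}mn$ for some minimal elements $m,n$, since otherwise $v_k$ alone would be an MSI, contradicting the minimality of $C(w,u)$. Passing from a weak descent at position $k$ to the step at position $k+1$ then shows, unless this is already the last step, that $v_{k+1}$ is also a weak descent whose label is one smaller. Consequently every descent in $C(w,u)$ lies in a single contiguous block at the end of $C$, the descent labels form a strictly decreasing run of consecutive integers, and the portion of $C$ preceding that block consists entirely of ascents.

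Next I would show that the final embedding $\eta_{v_n}$ of $u$ into $w$ is the prefix embedding. Suppose for contradiction that some position outside the support of $\eta_{v_n}$ must be reduced during $C$; the ascent-then-descent structure just established then forces either $l_1=1$ or $l_n=1$. If $l_1=1$, every chain lexicographically earlier than $C$ also begins with label $1$ and so contains $v_1$, contradicting the fact that $C(w,u)$ is an MSI. If $l_n=1$, the trailing weak-descent block would have to reach position~$1$, and the identity $v_{k-1}=v_{k+1}mn$ forces the final steps to trim minimal letters off the back of a word that becomes flat, violating the convention that only the first copy of a minimal letter may be reduced in a flat word (and contradicting the hypothesis that $u$ is nonempty). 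Hence $\eta_{v_n}$ is the prefix embedding and, by Lemma~\ref{Fflc}, $C$ is the first admissible chain ending at that embedding.

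Finally I would show that $u$ is a principal factor of $w$ with principal index $l_1$. Since $C(w,u)$ is an MSI, $C$ is not the lexicographically first chain in $[u,w]$; by Lemma~\ref{Fflc} the first chain ends at an embedding other than the prefix one, so $w$ admits at least one embedding of $u$ besides the prefix. If some prefix of $w$ strictly containing $u$ still had multiple embeddings in $w$, the same splicing trick used in the proof of Theorem~\ref{gMSIchar}, namely replacing the initial segment of $C$ with the lexicographically first chain of the corresponding subinterval, would produce a lex-earlier chain still containing $C-C(w,u)$, contradicting minimality; this gives the maximality clause of the principal factor definition. An analogous splicing argument applied at the single step $v_1$ shows that reducing $w(l_1)$ leaves only the prefix embedding of $u$ in $w$, so $l_1$ satisfies the definition of the principal index. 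The most delicate point, where I would expect to spend the bulk of the technical care, is the $l_n=1$ subcase in paragraph three, since in a forest the trailing minimal letters of $w$ can come from distinct trees and one must argue cleanly that the chain of weak descents nonetheless forces $u$ to be flat before invoking the ``no trimming from the back of a flat word'' convention.
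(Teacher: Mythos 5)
Your proposal follows the paper's proof structure closely: reduce to $w=v_i$, $u=v_j$, use Proposition~\ref{Fdescent} to force all descents in $C(w,u)$ to be weak descents in a trailing consecutive block, conclude $\eta_{v_n}$ is the prefix embedding and $C$ is the first admissible chain, and then verify the principal-factor conditions via the lex-earlier-chain splicing arguments. Two places in your third paragraph need repair, however. The supposition ``some position outside the support of $\eta_{v_n}$ must be reduced during $C$'' is vacuously true whenever $|u|<|w|$, since every position where $\eta_{v_n}$ equals $\hat{0}$ is reduced; what you mean is: suppose $\eta_{v_n}$ is \emph{not} the prefix embedding, so that position~$1$ lies outside its support and must therefore be reduced at some step, forcing $l_1=1$ or $l_n=1$ by the unimodality of the label sequence. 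And the $l_n=1$ subcase is easier than you fear: with $l_n=1$ and $l_{n-1}=2$, the weak-descent relation $v_{n-2}=v_n mn$ makes positions $1,2$ the only nonzero positions of $\eta_{v_{n-2}}$, so $u=v_n$ is the empty word, which directly contradicts the hypothesis that $u$ is nonempty — there is no need to invoke the flat-word reduction convention or worry about trailing minimal letters from distinct trees. You should also dispatch the $|u|=|w|$ case first (the forest analogue of Corollary~\ref{msisame} gives only strong-descent MSIs there), and note that Lemma~\ref{Fflc} is applied to $um$ — which the argument shows is non-flat — rather than to $u$ directly, since $u$ itself could be flat and the lemma is stated only for non-flat words.
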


\begin{proof}
The reverse implication follows from Propositions~\ref{Fdescent} and~\ref{Fprincipal}.

Suppose $C(v_i,v_j)$ is an MSI of $C$.  By the definition of a poset lexicographic order, $C(v_i,v_j)$ is an MSI of $C$ if and only if it is an MSI of $C[v_i,v_j]$ in the interval $[v_j,v_i]$. Thus, it suffices to consider the case $w=v_i$ and $u=v_j$.  

Since $|u|=|w|$ implies $[u,w]$ is direct product of chains, Corollary~\ref{msisame} can be translated to the forest case.  This implies strong descents are the only MSIs when $|u|=|w|$. However, strong descents are always length 1 MSIs.  Therefore, it suffices to consider the case when $|u|<|w|$ and $C(w,u)$ is an MSI of $C$ that does not consist of a strong descent.  Our first goals are to establish that $w$ has a prefix embedding of $u$ and  $C[w,u]$ is the first admissible chain ending at the prefix embedding.

Let $\eta$ be the embedding of $u$ into $w$ at the end of the chain $C$.  Note that any descent $v_k$ in $C$ is a weak descent since otherwise, by Proposition~\ref{Fdescent}, $C(v_{k-1},v_{k+1})$ would be an MSI, contradicting the minimality of $C(w,u)$.  If $k+1\neq n$, this forces $l_{k+2}<l_{k+1}=l_k-1$,  implying that $v_{k+1}$ is also a weak descent.  By continually applying this idea, we find that any descents contained in $C(w,u)$ occur in a single sequence of weak descents at the end of this interval, and the corresponding labels form a decreasing sequence of consecutive numbers.  

To see $\eta$ is the prefix embedding of $u$ into $w$, suppose for a contradiction that the set $M_{\eta}$ contains a $1$.  Since any descents occur in a sequence at the end of the interval, it follows that $l_1=1$ or $l_n=1$.  If $l_n=1$, then since $v_{n-1}$ is a weak descent, $u$ is the empty word.  This contradicts our assumptions.  Suppose $l_1=1$.  Then any chain $C'$ lexicographically earlier than $C$ has $l'_1=1$ and thus contains $v_1$, contradicting the fact that $C(w,u)$ is an MSI.  Therefore, $\eta$ is the prefix embedding of $u$ into $w$, allowing us to write $\eta=u$.

Since $|u|<|w|$, we must reduce position $|w|$ by the end of the chain. Note that the label $|w|$ can only be followed by another $|w|$ or the sequence of labels $|w|-1,\ldots,|u|+1$, which leads to a sequence of weak descents. It follows that $C$ must contain the word $um$ for some minimal element $m$.  Since $l_n=|u|+1$, $C$ reduces the $m$ at the end of $v_{n-1}=um$ to get $v_n=u$. This implies $um$ (and hence $w$) is not flat.  So by Lemma~\ref{Fflc}, $C[w,um]$ is the first admissible chain ending at the prefix-embedding of $um$.  Since minimal elements can only be removed from the front or back of a word, $C$ is the first admissible chain ending at the prefix-embedding of $u$. 

Since $C(w,u)$ is an MSI, $C$ cannot be the lexicographically first chain in $[u,w]$.  Therefore, $w$ must contain another embedding of $u$ in addition to the prefix embedding.  This implies that $u$ has a principal index in $w$.

Next, we will show $l_1$ is the principal index of $u$ in $w$.   Since $C$ is the first admissible chain ending at the prefix embedding, $w(l_1)$ is the first letter that is reducible and satisfies $w(l_1)>u(l_1)$.

We also need to show that the word $v_1$ contains only the prefix embedding of $u$. For a contradiction, suppose $v_1$ contains another embedding $\rho$ of $u$ besides the prefix embedding. Then there is a chain $C'$ ending at $\rho$ whose chain id begins $l_11\ldots$ and has $v'_1=v_1$.  This chain is thus lexicographically earlier than $C$ and satisfies $C-C(v_1,u)\subset C'$, contradicting the fact that $C(w,u)$ is an MSI.  So $v_1$ contains only the prefix embedding.

It remains to show that there is not a longer prefix containing $u$ with another embedding in $w$.  For a contradiction, suppose there is a longer prefix of $w$ containing $u$ that has multiple embeddings in $w$.  Then $um$ is a prefix of $w$ for some unique minimal element $m$ and has another embedding in $w$.  Let $C'$ be the lexicographically first chain in the interval containing $um$.  Note $C$ also contains $um$ because as the first admissible chain ending at the prefix embedding, $l_n=|u|+1$.  Thus, $C-C(w,um)\subset C'$, implying $C(w,um)$ is a skipped interval.  This contradicts the fact that $C(w,u)$ is an MSI.
\end{proof}

We can now easily describe the critical chains that consist entirely of strong descents.  Since the proof is very similar to that of Proposition~\ref{alldes}, we omit it.

\begin{Prop}
\label{Falldes}
Suppose $[u,w]\subset\P^*$, $(u,w)$ is non-empty, and $w$ is not flat.  Suppose $\eta$ is an embedding of $u$ into $w$.  Then there is a critical chain $C$ in $[u,w]$ ending at $\eta$ that consists entirely of strong descents if and only if $w(i)=\eta(i)$ or $w(i)\rightarrow\eta(i)$ for all $i$, $\eta(2)\neq\hat{0}$, and $\eta(|w|-1)\neq\hat{0}$.  Furthermore, these conditions imply $|w|-|u|\leq2$ and that $[u,w]$ has at most two critical chains consisting entirely of strong descents.\qed
\end{Prop}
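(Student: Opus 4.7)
The plan is to mirror the proof of Proposition~\ref{alldes} with the natural translations: ``reduce a $1$'' becomes ``reduce a minimal element,'' and ``$v_{i-1}\neq v_{i+1}11$'' becomes ``$v_{i-1}\neq v_{i+1}mn$ for any minimals $m,n$.''  The essential ingredient is Proposition~\ref{Fdescent}: a strong descent is a length-$1$ MSI, so a chain whose internal words are all strong descents automatically has $J(C)=\{v_1,\dots,v_{n-1}\}$ covering $C(w,u)$ and is therefore critical.

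For the forward direction, suppose $C$ is such a chain.  Each $v_i$ being a descent forces the chain id to be strictly decreasing, so every position of $w$ is reduced at most once; equivalently $w(i)=\eta(i)$ or $w(i)\rightarrow\eta(i)$.  If $\eta(2)=\hat 0$, then $w(2)$ is minimal, so its reduction requires position $1$ to already be $\hat 0$; but in a strictly decreasing id the label $1$ occurs after the label $2$, a contradiction.  If $\eta(|w|-1)=\hat 0$, then the embedding structure forces $\eta(|w|)=\hat 0$ as well, hence $w(|w|-1)$ and $w(|w|)$ are both minimal.  The decreasing id then has $l_1=|w|$ and $l_2=|w|-1$, so $v_0=v_2\cdot w(|w|-1)\cdot w(|w|)$, which disqualifies $v_1$ from being a strong descent, again contradicting the hypothesis.

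For the reverse direction, assume the three conditions.  Since $\eta\in\hat 0^*u\hat 0^*$, the $\hat 0$'s in $\eta$ form a prefix and a suffix block, which conditions~(2) and~(3) cap at one $\hat 0$ each, giving $|w|-|u|\leq 2$.  Because each $m_i\in\{0,1\}$, the multiset $M_\eta$ is actually a set and admits a unique strictly decreasing permutation; let $C$ be the chain with this chain id.  A step $l_i$ with $\eta(l_i)\neq\hat 0$ reduces a nonminimal letter, which is always allowed, while a step with $\eta(l_i)=\hat 0$ can occur only at position $1$ or $|w|$: position $|w|$ is reduced first (still at the right edge) and position $1$ is reduced last (then sitting at the left edge), so both are at the correct boundary at the moment of reduction.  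Hence $C$ is a well-defined maximal chain ending at $\eta$.  To verify every $v_i$ is strong, note that $v_{i-1}=v_{i+1}mn$ with $m,n$ minimal would require two consecutive steps to strip two minimal letters off the right tail, forcing $l_i=|w|$ and $l_{i+1}=|w|-1$, which is ruled out by $\eta(|w|-1)\neq\hat 0$.  Proposition~\ref{Fdescent} then gives criticality.

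The bound $|w|-|u|\leq 2$ has already been established, and the chain $C$ is determined by $\eta$ through the unique decreasing permutation of $M_\eta$; since there is a unique admissible embedding when $|u|\in\{|w|,|w|-2\}$ and at most the two embeddings $\hat 0 u$ and $u\hat 0$ when $|u|=|w|-1$, there are at most two critical chains of the prescribed form.  The main technical step to present carefully will be the boundary verification in the reverse direction---checking that each minimal-letter reduction really does sit at the current left or right edge of the supporting word---because in the forest setting this argument must be carried out without the simplification, available in $\P^*$, that the only minimal letter is $1$.
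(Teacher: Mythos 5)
Your proof is correct and follows the same overall route the paper intends (the paper omits the proof of this proposition with the remark that it is ``very similar'' to that of Proposition~\ref{alldes}, and yours is an appropriate forest translation). Two differences are worth flagging, both in your favor.

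First, in the forward direction you rule out $\eta(|w|-1)=\hat{0}$ by observing directly that it would make $v_1$ a weak descent and thereby contradict the hypothesis that every internal word is a \emph{strong} descent. The proof of Proposition~\ref{alldes} instead only assumes the $v_i$ are descents and reaches a contradiction via criticality, Theorem~\ref{gMSIchar}, and the definition of principal index. Your argument is shorter and legitimate because the statement you are proving explicitly posits strong descents.

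Second, in the reverse direction you verify chain validity by checking each reduction step at its moment of application (nonminimal letters anywhere; minimal letters only at the relevant boundary, with position $|w|$ removed first and position $1$ last). This is in fact necessary in the forest setting: Proposition~\ref{Fpermchar}, the analogue of the tool used in Proposition~\ref{alldes}'s proof, explicitly omits the case of flat $u$, so the direct boundary check is the cleaner route. You correctly flag this as the step requiring care; when writing it out you should also note that the flatness convention (reduce only the first live position in a flat word) never obstructs the construction: with $(u,w)$ nonempty, $\eta(2)\neq\hat{0}$, and $\eta(|w|-1)\neq\hat{0}$, the only possibly-flat internal word is $v_{n-1}$, and there the remaining label is necessarily $l_n=1$, which is the first live position. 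One small imprecision: in your strong-descent verification, ``forcing $l_i=|w|$ and $l_{i+1}=|w|-1$'' is not quite what happens in general --- if position $|w|$ was already stripped, the attempted second stripping would be at position $|w|-1$ itself --- but either way the contradiction with $\eta(|w|-1)\neq\hat{0}$ goes through, so the conclusion stands.
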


It should be noted that in the antichain case, the fact that $\mu(i(w),w)=1$ for the inner word $i(w)$ when $w$ is not flat follows directly from this proposition.

As in the case of $\P$, the remaining critical chains must contain at least one MSI caused by a principal factor.  So these chains contain a principal factor  of $w$ or a principal factor $p_{v_i}$ of some $v_i$ with the property that $C(w,v_{i+1})$ consists entirely of strong descents.

In order to have a critical chain in $[u,w]$ involving an MSI resulting from a principal factor $p_{v_i}$, $p_{v_i}$ needs to be contained in a different MSI or $p_{v_i}$ must equal $u$. By Theorem~\ref{FMSIchar}, $p_{v_i}$ could be contained in one of three types of MSI: an MSI caused by a strong descent, an overlapping MSI caused by a principal factor, or an adjacent MSI caused by a principal factor.  This is where the forest case becomes more complex than the case of $\P$ because the third possibility can happen.  However, it can only happen when $v_i$ is a rooted word.

Moving forward, we will have few results that apply to both rooted and unrooted words.  However, unrooted words still behave much like they did in the positive integer case, and our proof of Bj\"orner's formula will help us understand rooted words in the new context.

\begin{Prop}
\label{Fpfover}
Suppose $C(v_i,p_{v_i})$ is an MSI of a critical chain $C$ caused by the principal factor $p_{v_i}$.  If $v_i$ is unrooted, then $p_{v_i}$ is unrooted and is contained in an MSI caused by a strong descent or an overlapping MSI caused by a principal factor.
\end{Prop}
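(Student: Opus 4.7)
The plan is to prove Proposition~\ref{Fpfover} in two steps, paralleling Proposition~\ref{pfover} but with the role of ``$1$'' in $\P^*$ played by minimal elements of $F$. The pivotal rooted-forest observation I will use throughout is: if a non-minimal letter $s$ covers $s'$ and $s\ge t$ for some minimal $t$, then $s'\ge t$, because $t$ is an ancestor of $s$ in its tree, hence an ancestor of $s$'s parent $s'$. As an immediate corollary, reducing a non-minimal letter in a word cannot remove any embedding of a rooted word in it.

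First I would show $p_{v_i}$ is unrooted. Assuming $p_{v_i}$ rooted, the corollary says that no non-prefix embedding of $p_{v_i}$ in $v_i$ is destroyed by reducing a non-minimal letter; such an embedding can only vanish when a minimal letter at one of the ends is deleted. Since deleting position $1$ would destroy the prefix embedding, $l_{i+1}=|v_i|$ and $v_i(|v_i|)$ is minimal. The minimality of $l_{i+1}$ as a principal index then forces, for each $k<|v_i|$, either $v_i(k)\le p_{v_i}(k)$ (hence $v_i(k)=p_{v_i}(k)$, which is minimal since $p_{v_i}$ is rooted) or $v_i(k)$ non-reducible (which, since position $1$ is always reducible and middle positions are reducible exactly when non-minimal, forces $v_i(k)$ minimal). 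Every letter of $v_i$ is therefore minimal, contradicting unrootedness.

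Next I would rule out the adjacent principal-factor MSI case. Suppose for contradiction the MSI of $C$ containing $p_{v_i}=v_{j+1}$ is $C(v_j,p_{v_j})$, where $v_j$ immediately precedes $v_{j+1}$ in $C$ and $p_{v_j}$ is a principal factor of $v_j$. By Part~1, $p_{v_i}$ is unrooted and thus not flat, so Lemma~\ref{Fflc} applied to the first admissible chain $C[v_i,v_{j+1}]$ shows its label sequence ends with the strictly decreasing tail $|v_i|,\ldots,|v_{j+1}|+1$. In particular $l_{j+1}=|v_j|$ and $v_j=v_{j+1}m$ for some minimal $m$. Theorem~\ref{FMSIchar} then says $l_{j+1}$ is the principal index of $p_{v_j}$ in $v_j$ and $C[v_j,p_{v_j}]$ is first admissible; since its initial label $|v_j|$ is maximal, Lemma~\ref{Fflc} forces its chain id to be the strictly decreasing sequence $|v_j|,\ldots,|p_{v_j}|+1$. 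Each step deletes a minimal letter at the current end, so writing $k=|v_j|-|p_{v_j}|$ we have $v_j=p_{v_j}\,m'_1\,m'_2\cdots m'_k$ with each $m'_s$ minimal.

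Since $v_j$ inherits the non-minimal letters of $v_{j+1}$ it is unrooted, and Part~1 applied to $v_j$ yields $p_{v_j}$ unrooted. I reach a contradiction by showing $p_{v_j}$ is in fact rooted. Reducing $v_j$ at its last letter $m$ must kill every non-prefix embedding of $p_{v_j}$, so each such embedding uses position $|v_j|$; the only candidate is the suffix embedding starting at position $k+1$, and it must exist since $p_{v_j}$ is a principal factor and so has at least two embeddings. The witnessing inequalities $v_j(k+s)\ge p_{v_j}(s)$ for $s=1,\ldots,|p_{v_j}|$ split into two families: for $s>|p_{v_j}|-k$ they read $m'_{k+s-|p_{v_j}|}\ge p_{v_j}(s)$, forcing $p_{v_j}(s)$ minimal by minimality of $m'_t$; for $s\le|p_{v_j}|-k$ they read $p_{v_j}(s+k)\ge p_{v_j}(s)$, and minimality propagates leftward in blocks of $k$ from the already-minimal tail until every letter of $p_{v_j}$ is pinned minimal. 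Thus $p_{v_j}$ is rooted, contradicting Part~1. The main obstacle is exactly this block-propagation step, since the slick one-step $\P^*$-argument ``$v_j(l_{j+1})>1$'' from Proposition~\ref{pfover} no longer applies in $F^*$ where a minimal letter at the end of a word is reducible.
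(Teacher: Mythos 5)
Your proof is correct and follows the same two-step structure as the paper's: first rule out rooted principal factors of unrooted words, then refute the adjacent principal-factor MSI by showing $p_{v_j}$ would have to be rooted. The genuine contribution of your proposal is the block-propagation argument in Part~2: the paper's proof asserts without justification that ``the principal index of an unrooted principal factor must contain a nonminimal letter,'' and this fact is exactly what your analysis of the suffix-embedding inequalities $v_j(k+s)\ge p_{v_j}(s)$ establishes, propagating minimality leftward in blocks of size $k$ from the already-minimal tail until every letter of $p_{v_j}$ is pinned. As you correctly observe, this is precisely where $F^*$ is harder than $\P^*$: the one-step argument $v_j(l_{j+1})>1$ from Proposition~\ref{pfover} does not transfer, because a minimal letter at the end of a word \emph{is} reducible in $F^*$, so the paper's assertion here is a nontrivial step that your argument actually proves. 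Your Part~1 is also more carefully argued than the paper's one-sentence version, as it explicitly disposes of the subcase in which the candidate principal index is the last, minimal-letter position of $v_i$ (which cannot occur because it would force $v_i$ itself to be rooted).
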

\begin{proof}
If $p$ is rooted prefix of an unrooted rooted $v_i$, then the principal index of $p$ in $v_i$ contains a nonminimal element.  So reducing the letter at the principal index cannot eliminate any embedding of $p$.  Thus, a rooted word cannot be a principal factor of an unrooted word.

Let $v_{j+1}=p_{v_i}$.  Suppose for a contradiction that $C(v_j,p_{v_j})$ is an MSI caused by a principal factor.  Since $C(v_i,v_{j+1})$ is an MSI caused by a principal factor, Theorem~\ref{FMSIchar} implies $C[v_i,v_{j+1}]$ is the first admissible chain ending at the corresponding embedding.  So by Lemma~\ref{Fflc}, $v_{j}=v_{j+1}m$ for some minimal element $m$.  However, this implies the value at the principal index $l_{j+1}=|v_j|$ of $p_{v_j}$ in $v_j$ is $m$. But then $v_j$ and $v_{j+1}=p_{v_i}$ are rooted words because the principal index of an unrooted principal factor must contain a nonminimal letter.  Since $p_{v_i}$ is not rooted, this is a contradiction.
\end{proof}

As stated above, this result does not apply to rooted words.  For example, in $[1,1a1a]$, the rooted principal factor $1a$ is contained in the adjacent MSI caused by the rooted principal factor $1$ of $1a1$.

The previous result essentially separates the unrooted words from the rooted ones.

\begin{Cor}
\label{notrooted}
If $w$ is not a rooted word and $C(w,u)$ is a critical maximal chain of $[u,w]$, then $C(w,u)$ has no MSIs caused by rooted principal factors.
\end{Cor}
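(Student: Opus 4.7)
I will argue by contradiction: assume $C$ is a critical chain of $[u,w]$ with $w$ unrooted, and $C(v_a,v_b)\in I(C)$ is an MSI caused by a rooted principal factor $v_b=p_{v_a}$, with $a$ chosen minimal among such MSIs of $C$. The opening argument in the proof of Proposition~\ref{Fpfover} shows that a rooted prefix of an unrooted word cannot be a principal factor, so $v_a$ must itself be rooted, and consequently every $v_i$ with $i\geq a$ is rooted as well.

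The crux is to pin down the label $l_{a+1}$ and then show that $v_a$ has no interval in $J(C)$ covering it. By Theorem~\ref{FMSIchar}, $C[v_a,v_b]$ is the first admissible chain in $[v_b,v_a]$ ending at the prefix embedding of $v_b$. Because $v_a$ and $v_b$ are both rooted and agree on the first $|v_b|$ letters, the reduction multiset $M_\eta$ contains exactly one copy of each position in $\{p_a+|v_b|,\ldots,q_a\}$, where $[p_a,q_a]$ is the embedding block of $v_a$ in $w$; Lemma~\ref{Fflc} then forces the chain id to be the unique strictly decreasing sequence on these positions, giving $l_{a+1}=q_a$. Now $v_a$ is not contained in the MSI $C(v_a,v_b)$, so it must lie in a different interval of $J(C)$. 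Any principal-factor MSI (or truncation of one) containing $v_a$ would start at some $v_c$ with $c<a$; Proposition~\ref{Fpfover} forces $v_c$ to be rooted, so $C(v_c,\cdot)$ would be a rooted-principal-factor MSI of $C$ starting at an index smaller than $a$, contradicting the minimality of $a$. The only remaining possibility is a strong-descent MSI $\{v_a\}$, which demands $l_a>l_{a+1}=q_a$.

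I will then rule out $l_a>q_a$ in the two remaining cases for $v_{a-1}$. If $v_{a-1}$ is unrooted, the transition $v_{a-1}\to v_a$ must reduce the unique nonminimal letter of $v_{a-1}$ (removing a minimal letter would preserve unrootedness), so $l_a$ lies inside $[p_a,q_a]$, giving $l_a\leq q_a$, a contradiction. If $v_{a-1}$ is rooted, $l_a$ is a boundary of $v_{a-1}$'s block, and $l_a>q_a$ forces $l_a=q_{a-1}$, i.e., a right-boundary removal from $v_{a-1}$; combined with $l_{a+1}=q_a$, which is likewise a right removal from $v_a$, the pair of reductions $v_{a-1}\to v_a\to v_{a+1}$ is a consecutive right-right removal, so $v_{a-1}=v_{a+1}\,m\,n$ where $m,n$ are the two stripped boundary letters. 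Thus $v_a$ is a weak descent rather than a strong descent, and by Theorem~\ref{FMSIchar} the set $\{v_a\}$ is not an MSI at all. Either way $v_a$ is uncovered, so $J(C)$ fails to cover $C(w,u)$ and $C$ is not critical, the desired contradiction.

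The main technical step I expect to wrestle with is the degeneration of $M_\eta$ to a single decreasing sequence in the rooted setting, which is what lets Lemma~\ref{Fflc} pin $l_{a+1}$ down to $q_a$; the forest-specific identification of the consecutive right-right removal pattern as producing a weak (rather than strong) descent is the other essential but smaller ingredient.
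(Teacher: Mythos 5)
Your proof is correct, but it attacks the problem from a genuinely different direction than the paper. The paper locates the last unrooted word $v_k$ in the chain, forces $v_k$ and $v_{k+1}$ to be strong descents (using the first half of Proposition~\ref{Fpfover} plus the critical-chain covering requirement), deduces that $l_{k+2}$ must be the left endpoint of $v_{k+1}$'s support, and concludes that $v_{k+2}$ is an ascent that can belong to no MSI (principal index cannot be $1$), hence is uncovered. You instead pick the earliest word $v_a$ heading a rooted-principal-factor MSI, use FMSIchar together with Lemma~\ref{Fflc} and the degenerate all-$1$'s form of $M_\eta$ in the rooted-over-rooted setting to pin $l_{a+1}$ to the right endpoint $q_a$, and then show $v_a$ itself is uncovered: principal-factor intervals (or their truncations) reaching $v_a$ must originate at some rooted $v_c$ with $c<a$ (Fpfover), contradicting minimality, while your two-case analysis on $v_{a-1}$ shows $\{v_a\}$ cannot be a strong-descent MSI because either $l_a\leq q_a$ (so $v_a$ is not even a descent) or $v_{a-1}=v_{a+1}mn$ (so $v_a$ is a weak descent). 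Your route costs one extra technical computation (the $M_\eta$ degeneration and Lemma~\ref{Fflc}), which the paper avoids, but it buys a cleaner accounting of exactly which labels are forced and sidesteps the paper's slightly awkward interpretation of ``$l_{k+2}=1$'' as a position relative to $v_{k+1}$ rather than $w$. Both proofs terminate with the same contradiction, that a word of the open interval escapes $J(C)$, so the approaches are morally parallel, but the specific word exhibited ($v_a$ versus $v_{k+2}$) and the lemmas driving the pinning down of labels differ.
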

\begin{proof}Suppose for a contradiction $C(w,u)$ did contain an MSI caused by an rooted principal factor.  By Proposition~\ref{Fpfover}, a principal factor of an unrooted word is also not rooted.  Thus, the last unrooted word in $C(w,u)$ must be a strong descent $v_k$.  Since no rooted word is a principal factor of $v_k$, $v_{k+1}$ must also be a strong descent for it to be contained in an MSI.  This implies $l_{k+2}=1$, so that $v_{k+2}$ cannot be a strong descent MSI.  However, the principal index of a principal factor cannot be $1$.  Thus, $v_{k+2}$ cannot be in an MSI caused by a principal factor.  So by Theorem~\ref{FMSIchar}, if $u\neq v_{k+2}$,  $v_{k+2}$ is not in any MSI, contradicting the fact that $C(w,u)$ is a critical maximal chain.  If $u=v_{k+2}$, then we have shown $C(w,u)$ has no MSIs caused by rooted principal factors.
\end{proof}

This allows us to conclude that if $w$ is not rooted, any overlapping MSIs in the set $I(C)$ of a critical chain must come in pairs.

\begin{Prop}
\label{Foverlap}
If $w$ is not rooted, an MSI of a critical chain can overlap with at most one other MSI.
\end{Prop}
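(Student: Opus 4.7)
The plan is to mimic the proof of Proposition~\ref{overlap} in the forest setting, using Corollary~\ref{notrooted} to rule out the rooted principal factors that would otherwise create the adjacent-MSI pathology illustrated after Proposition~\ref{Fpfover}.

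Suppose for contradiction that a critical chain $C$ of $[u,w]$ contains a maximal sequence of three or more pairwise-overlapping MSIs $C(v_{i_1},v_{j_1}), C(v_{i_2},v_{j_2}), \ldots, C(v_{i_k},v_{j_k})$ with $i_1 < i_2 < \cdots < i_k$. Since strong-descent MSIs have length one by Proposition~\ref{Fdescent}, they cannot participate in an overlap, so Theorem~\ref{FMSIchar} forces each $C(v_{i_\ell},v_{j_\ell})$ to be caused by a principal factor, with $l_{i_\ell+1}$ the corresponding principal index and $C[v_{i_\ell},v_{j_\ell}]$ the first admissible chain ending at the prefix-embedding of $v_{j_\ell}$. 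Because $w$ is not rooted, Corollary~\ref{notrooted} guarantees that none of these principal factors is rooted, so each $v_{i_\ell}$ is unrooted.

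I would then apply Lemma~\ref{Fflc} to each first-admissible subchain $C[v_{i_\ell},v_{j_\ell}]$: its label sequence is unimodal with decreasing suffix $|v_{i_\ell}|, |v_{i_\ell}|-1,\ldots,\ell^*+1$ (where $\ell^*$ is the largest index with a nonzero prefix entry). Since $v_{j_\ell}$ is an unrooted principal factor, its principal index $l_{i_\ell+1}$ points to a nonminimal letter of $v_{i_\ell}$ strictly greater than the prefix value, so $l_{i_\ell+1}$ sits strictly before the decreasing suffix. Hence each label sequence has at least one entry in its weakly increasing portion, and any overlap of two consecutive MSIs must begin there. No letters of $v_{i_\ell}$ are removed in this weakly increasing portion, which forces $|v_{i_1}| = |v_{i_2}| = \cdots = |v_{i_k}|$. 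Consequently each MSI after the first contains the entire decreasing suffix of $C(v_{i_2},v_{j_2})$.

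Finally I would track the $I(C)\to J(C)$ refinement. The first interval $C(v_{i_1},v_{j_1})$ is added to $J(C)$, and each subsequent $C(v_{i_\ell},v_{j_\ell})$ is truncated to remove its overlap with $C(v_{i_1},v_{j_1})$. After truncation, each remaining interval still contains the portion of $C(v_{i_2},v_{j_2})$ that extends past $v_{j_1}$, so containment-minimality retains only $C[v_{j_1},v_{j_2})$ and discards the rest. In particular, $v_{j_2}$ lies in no interval of $J(C)$, so $J(C)$ does not cover $C$, contradicting that $C$ is critical.

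The main obstacle is the middle step: verifying that the forest-case principal-index machinery still forces the overlapping MSIs to have equal-length endpoints. This is exactly where Corollary~\ref{notrooted} does the work, ensuring each $l_{i_\ell+1}$ is a nonminimal reducible letter and hence sits strictly before the decreasing suffix guaranteed by Lemma~\ref{Fflc}. Once the equal-length reduction is in place, the $I(C)\to J(C)$ argument is identical to its $\P^*$ counterpart.
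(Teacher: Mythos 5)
Your proof is correct and follows the paper's route exactly: you replay the $I(C)\to J(C)$ argument from Proposition~\ref{overlap}, flagging precisely the two adaptations the paper itself notes, namely that Corollary~\ref{notrooted} keeps every word in the overlapping intervals unrooted, and that unrooted principal factors force $\eta_{v_{i_\ell}}(l_{i_\ell+1})$ to be nonminimal so the label sequences begin with a weakly increasing portion. The paper's proof is terser (it simply says the argument is "entirely analogous"), but the substance is identical.
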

\begin{proof}
This proof is entirely analogous to that of Proposition~\ref{overlap}, with two notable exceptions.  First, we need to point out that by Corollary~\ref{notrooted}, all words involved in the overlapping intervals $C(v_{i_1},v_{j_1}),  C(v_{i_2},v_{j_2}),\ldots, C(v_{i_k},v_{j_k})$ are not rooted.  Second, this implies each $\eta_{v_{i_\ell}}(l_{i_\ell+1})$ is nonminimal because $l_{i_\ell+1}$ corresponds to a principal index of an unrooted word.
\end{proof}

Notice again that this restriction on overlapping MSIs does not apply to rooted words.  For example, in the last maximal chain of $[a,a11aa11a]$, the word $a11aa$ is contained in three MSIs.  Nevertheless, this chain is critical.

At this point, it is clear that the $J(C)$ structure of the critical chains is essentially the same as in section 3 when a word is not rooted, while the forest case reduces to the ordinary factor order case of section 2 when a word is rooted.  To establish the formula,  we will also need to update the definitions of primary prefixes and the $\nu$ function, and use them to establish the formula when $w$ is not rooted.  We have little choice but to establish the formula separately for rooted words.

\begin{Thm}
\label{FJCstruc}
Suppose $w$ is not a rooted word.  Let $C:w=v_0\stackrel{l_1}{\rightarrow}v_1\stackrel{l_2}{\rightarrow}\ldots\stackrel{l_{n-1}}{\rightarrow} v_{n-1}\stackrel{l_n}{\rightarrow}v_n=u$ be a maximal chain of $[u,w]$.  Then $C$ is a critical chain if and only if $C(w,u)$ can be written as a sequence of intervals $$C[v_{i_1}=v_1,v_{i_2})\cup C[v_{i_2},v_{i_3}) \cup \ldots \cup C[v_{i_{k-1}},v_{i_k}=u)$$ where each interval $C[v_{i_j},v_{i_{j+1}})$ is one of the following three types:
\begin{enumerate}
\item $C[v_{i_j},v_{i_{j+1}})$ is an MSI caused by the strong descent $v_{i_{j}}$.
\item $C[v_{i_j},v_{i_{j+1}})$ is an MSI caused by the principal factor $v_{i_{j+1}}$ of the word $v_{i_j-1}$.
\item The word $v_{i_{j+1}}$ is a principal factor of a word in  $C[v_{i_{j-1}},v_{i_{j}})$ and satisfies $v_{i_{j+1}}m_1\ldots m_k=v_{i_{j}}$, where each $m_i$ is a minimal element and $k=|v_{i_{j}}|-|v_{i_{j+1}}|>0$. The value $k$ is unique in the sense that no other word satisfies the description of $v_{i_{j+1}}$ for another value $k$.
\end{enumerate}
Furthermore, type (1) intervals are followed by intervals of type (1) or (2), type (2) intervals are followed by intervals of type (1) or (3), and type (3) intervals are followed by intervals of type (1).  Finally, only intervals of type (1) or (2) can begin the decomposition.
\end{Thm}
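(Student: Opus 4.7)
The plan is to follow the template of the proof of Theorem~\ref{JCstruc} almost verbatim, substituting the forest-case analogs of each tool: Theorem~\ref{FMSIchar} for the MSI characterization, Proposition~\ref{Fpfover} for the impossibility of adjacent principal-factor MSIs, Proposition~\ref{Foverlap} for the fact that overlaps come in pairs (the hypothesis that $w$ is not rooted is used precisely here), and Lemma~\ref{Fflc} for the unimodal structure of first admissible chains. The basic dichotomy of MSIs into strong descents and principal factors gives types (1) and (2) immediately, so the substantive work is in showing that the ``leftover'' pieces in passing from $I(C)$ to $J(C)$ are exactly the type (3) intervals, and that the ordering constraints are forced.

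For the forward implication, I would assume $C$ is critical and look at the sequence of intervals in $J(C)$. Theorem~\ref{FMSIchar} shows that every MSI in $I(C)$ is of type (1) or type (2), so the only way a non-MSI interval appears in $J(C)$ is through the truncation step. Proposition~\ref{Foverlap} restricts overlaps to pairs (this uses $w$ not rooted), so whenever a truncation occurs, it is the second of exactly two overlapping MSIs. From the proof of Proposition~\ref{Foverlap} (together with Lemma~\ref{Fflc}) the two overlapping MSIs share the same length, so the truncated remainder has the form $C[v_{i_j},v_{i_{j+1}})$ where $v_{i_{j+1}}$ is a principal factor of some word in the previous interval and $v_{i_j}=v_{i_{j+1}} m_1 \ldots m_k$ for minimal letters $m_1,\dots,m_k$; this is the defining shape of a type (3) interval. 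The ordering constraints then follow: type (1) cannot be followed by type (3) since the associated half-open interval is empty; type (2) cannot be followed by type (2) by Proposition~\ref{Fpfover}; type (3) cannot be followed by type (2) by the same proposition, nor by another type (3) since uniqueness of the truncation value $k$ would be violated; and a type (3) interval cannot open the decomposition because it has no preceding interval to overlap with.

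For the backward implication, I would assume the decomposition exists and verify that each listed interval is either an MSI of $C$ or arises as a containment-minimal residue of an MSI after the $I(C) \to J(C)$ refinement. Intervals of types (1) and (2) are MSIs by Theorem~\ref{FMSIchar}. For a type (3) interval $C[v_{i_j},v_{i_{j+1}})$, Lemma~\ref{Fflc} applied to the preceding type (2) interval $C[v_{i_{j-1}},v_{i_j})$ shows its label sequence is unimodal; the word $v$ of which $v_{i_{j+1}}$ is a principal factor occurs before the decreasing suffix, so $C[v,v_{i_{j+1}}]$ is itself the first admissible chain ending at the prefix embedding of $v_{i_{j+1}}$. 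By Theorem~\ref{FMSIchar}, $C(v,v_{i_{j+1}})$ is an MSI, and its residue after subtracting its overlap with $C(v_{i_{j-1}},v_{i_j})$ is exactly $C[v_{i_j},v_{i_{j+1}})$. The uniqueness of $k$ ensures this residue is containment-minimal among all such residues, so it survives into $J(C)$. The ordering rules given in the theorem then imply that these pieces fit together to cover $C(w,u)$, proving that $J(C)$ covers $C$ and hence $C$ is critical by Theorem~\ref{chainmu}(1).

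The main obstacle I expect is keeping the bookkeeping of embeddings straight when extracting the type (3) interval from the second member of an overlapping pair: one must verify that the principal factor $v_{i_{j+1}}$ and its principal index $l_{i_j+1}$ are genuinely those of the word $v$ located in the previous type (2) interval (not merely of $v_{i_j-1}$), and that no accidental additional overlap with earlier intervals in $J(C)$ destroys containment minimality. This is handled by invoking Lemma~\ref{Fflc} carefully to pin down which embedding of $v_{i_{j+1}}$ appears in the chain, and by exploiting the restriction from Corollary~\ref{notrooted} that ensures all the words in question are unrooted so that the principal index points to a non-minimal letter, ruling out degenerate collisions.
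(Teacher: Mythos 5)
Your proposal is correct and takes essentially the same approach as the paper, which also invokes Corollary~\ref{notrooted} to handle the unrooted hypothesis and then adapts the proof of Theorem~\ref{JCstruc} by replacing $v_{i_{j+1}}1^m$ with $v_{i_{j+1}}m_1\ldots m_k$ for minimal elements $m_i$. You spell out the substitutions (Theorem~\ref{FMSIchar}, Propositions~\ref{Fpfover} and~\ref{Foverlap}, Lemma~\ref{Fflc}) that the paper leaves implicit, but the argument is the same.
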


\begin{proof}
Since $w$ is not rooted, by Corollary~\ref{notrooted}, no rooted word can be a principal factor which causes an MSI. 

Thus, both implications can be proved as they were in the proof of Theorem~\ref{JCstruc} by updating the relationship between between $v_{i_j}$ and $v_{i_{j+1}}$.  For example, in the forward implication, we have $v_{i_j}=v_{i_{j+1}}m_1\ldots m_k=v_{i_{j}}$, where each $m_i$ is a minimal element and $k=|v_{i_{j}}|-|v_{i_{j+1}}|>0$, instead of $v_{i_j}=v_{i_{j+1}}1^m$ for $m=|v_{i_{j}}|-|v_{i_{j+1}}|$.
\end{proof}

Since Theorem~\ref{FJCstruc} gives essentially the same result as Theorem~\ref{JCstruc}, when $w$ is not rooted, we need only update the appropriate definitions to get the desired formula.  However, we will need to handle the case when $w$ is rooted separately before stating the formula.

Define a word $v$ to be a \emph{base of $w$} if $v(j)=w(j)$ or $w(j)\rightarrow v(j)$ for all $j$ and $|v|=|w|$ or $|w|-1$.  Define the \emph{degree} of a base to be the number of indices $j$ for which $w(j) \rightarrow v(j)$.

Suppose $v_i$ is a base of $w$ of degree $i$.  Let $l$ be the index of the smallest position satisfying $w(l) \rightarrow v_i(l)$, or $|w|+1$ if $i=0$. We define any word $p_{v_i}$ that is a principal factor of $v_i$ and whose principal index takes a value less than $l$ to be a \emph{principal factor of $w$ of degree $i$}. As in the previous section, when a degree is not noted in the language or the notation, the assumption will be that the principal factor has degree $0$.  

Define $w\setminus k$ to be the word that results when $k$ minimal elements are removed from the suffix of $w$, or as undefined if $w$ ends in less than $k$ minimal elements.  Note that unlike the integer case when $1$ was the only minimal element, there are multiple minimal elements which could be reduced.

Suppose $p_{v_i}$ is a principal factor of $v_i$.  Let $v_i'$ be the word that results when the letter in $v_i$ at the principal index of $p_{v_i}$ is reduced by $1$ rank.  That is, $v_i(j)\rightarrow v_i'(j)$ when $j$ is the principal index of $p_{v_i}$ and $v_i'(j)=v_i(j)$ for all other indices $j$.  Define the \emph{primary prefix} $x({p_{v_i}})$ of a principal factor $p_{v_i}$ to be the longest proper prefix that has at least 2 embeddings in $v_i'$ and satisfies $x({p_{v_i}})=p_{v_i}\setminus k$ for some $k$. So if no $k>0$ satisfies the restriction, or no such word has at least two embeddings in $v_i'$, the primary prefix is undefined

Notice the primary prefix definition still makes sense when $p_{v_i}$ is rooted, in which case $i=0$ and $p_{v_0}$ is an outer word not contained in the inner word.  In this case, $x({p_{v_0}})=p_{v_0}\setminus 1$.  As was seen in our proof of Bj\"orners result, this implies that whenever $u\leq x({p_{v_0}})$, $p_{v_0}$ is contained in a length 1 MSI.  

The following proposition asserts that the primary prefix is the only word that can cause a type (3) interval after the type (2) interval $C(v_i,p_{v_i})$.  While the spirit of the proof is similar to that of Proposition~\ref{pmpfx}, the definition of a primary prefix has changed enough to warrant stating the entire proof.

\begin{Prop}
\label{Fpmpfx}
Suppose $w$ is not rooted.  Let $C$ be a maximal chain of $[u,w]$ and suppose $C(v_i,p_{v_i})$ is a type (2) interval in the set $J(C)$.  Then $C(v_i,p_{v_i})$ is followed by a type (3) interval $C[p_{v_i},x)$ in $J(C)$ if and only if $x$ is the primary prefix of $p_{v_i}$ and $x$ appears in $C$.
\end{Prop}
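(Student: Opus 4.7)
The plan is to adapt the proof of Proposition~\ref{pmpfx} to the forest setting, using the generalized definitions of principal factor and primary prefix. The structure of the argument splits into the two implications, as before.

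For the reverse direction, suppose $x$ is the primary prefix of $p_{v_i}$ and $x$ appears in $C$. Since $x = p_{v_i}\setminus k$ for some positive $k$ and $x$ has at least two embeddings in $v_i'$, I would locate the last word $v_m$ in $C(v_i,p_{v_i})$ which still admits a non-prefix embedding of $x$. Such a $v_m$ exists because $v_{i+1}$ certainly contains $x$ as a prefix (being obtained from $v_i$ by reducing a position past where $x$ ends), while the terminal word $p_{v_i}$ of the interval has only the prefix embedding of $x$. I then need to check that $x$ is in fact a principal factor of $v_m$ with principal index $l_{m+1}$. This requires three verifications: (i) $v_{m+1}$ contains only the prefix embedding of $x$, which follows from the choice of $v_m$; (ii) no longer prefix of $v_m$ containing $x$ has multiple embeddings in $v_m$, which follows from the maximality of $x$ in the primary prefix definition combined with the fact that $v_m \leq v_i'$; and (iii) for every reducible index $k$ between $l_{i+1}$ and $l_{m+1}$, one has $v_m(k) = x(k)$ or $v_m(k) = \hat{0}$, which follows from $C[v_i,p_{v_i}]$ being the first admissible chain ending at the prefix embedding of $p_{v_i}$ together with Lemma~\ref{Fflc}. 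Applying Theorem~\ref{FMSIchar} then yields that $C(v_m,x)$ is an MSI of $C$. Because $x$ is maximal among primary-prefix candidates, no intermediate word in $C(p_{v_i},x)$ can be a principal factor of a word in $C(v_i, p_{v_i})$, so when the overlap of $C(v_m,x)$ with $C(v_i,p_{v_i})$ is removed during the construction of $J(C)$, the surviving reduced interval is exactly $C[p_{v_i},x)$, giving a type (3) interval in $J(C)$.

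For the forward direction, suppose $C(v_i,p_{v_i})$ is followed by a type (3) interval $C[p_{v_i},x)$ in $J(C)$. By Theorem~\ref{FJCstruc}, $x$ is a principal factor of some word $v$ occurring in $C(v_i,p_{v_i})$ and $x = p_{v_i}\setminus k$ for some $k$. Arguing as in the proof of Theorem~\ref{FJCstruc}, the existence of the MSI $C(v,x)$ overlapping the type (2) MSI $C(v_i,p_{v_i})$ forces $|v| = |v_i'|$ and $v \leq v_i'$, so $x$ has at least two embeddings in $v_i'$. It remains to show $x$ is the \emph{longest} such proper prefix of $p_{v_i}$ of the form $p_{v_i}\setminus k'$. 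Suppose for contradiction that $x' = p_{v_i}\setminus k'$ is a longer prefix with at least two embeddings in $v_i'$. Applying the reverse-direction construction with $x'$ in place of $x$ produces an MSI $C(v'',x')$ in $I(C)$, which after removing its overlap with $C(v_i,p_{v_i})$ reduces to $C[p_{v_i},x')$. But $C[p_{v_i},x') \subsetneq C[p_{v_i},x)$, contradicting the containment-minimality used to select $C[p_{v_i},x)$ as a member of $J(C)$. Hence $x$ is the primary prefix of $p_{v_i}$.

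The main obstacle is verifying step (ii) of the principal-index check for $x$ in $v_m$: in the forest setting the definition of principal factor demands that no longer prefix containing $x$ has multiple embeddings in $v_m$, rather than the simpler ``$x$ is a suffix'' condition available in $\mathbb{P}^*$. Handling this cleanly requires tracking how the descent-and-trimming structure forced by Lemma~\ref{Fflc} interacts with the possibility that $p_{v_i}$ itself ends in several minimal letters of potentially distinct types. Once this is carefully written out, the contradiction argument in the forward direction is essentially formal.
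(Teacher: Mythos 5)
Your proposal follows the same decomposition and the same key steps as the paper's own proof: for the reverse direction, locate the last word $v_m$ in $C(v_i,p_{v_i})$ retaining a non-prefix embedding of $x$, verify $x$ is a principal factor of $v_m$ using Lemma~\ref{Fflc} and the first-admissibility of $C[v_i,p_{v_i}]$, invoke Theorem~\ref{FMSIchar} to get the MSI $C(v_m,x)$, and use maximality of $x$ to ensure it survives the reduction to $J(C)$ as exactly $C[p_{v_i},x)$; for the forward direction, deduce $x=p_{v_i}\setminus k$ via Theorem~\ref{FJCstruc} and rule out a longer candidate by containment-minimality of $J(C)$. Your verification (ii) is a point the paper handles implicitly; note that your justification (maximality of $x$ plus $v_m\leq v_i'$) only covers candidate prefixes that are themselves prefixes of $p_{v_i}$, and one should separately observe that a candidate prefix equal to or longer than $p_{v_i}$ having multiple embeddings in $v_i'$ would contradict $p_{v_i}$ being a principal factor of $v_i$ — an easy addition, but worth making explicit.
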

\begin{proof} First suppose $x$ is the primary prefix of $p_{v_i}$ and that it appears in $C$.  Let $v_\ell$ be the last word in the interval $C(v_i,p_{v_i})$ that contains at least two embeddings of $x$.  We will show $x$ is a principal factor of $v_\ell$ with principal index $l_{\ell+1}$.  Since $p_{v_i}$ is a prefix of $v_{\ell+1}$, $x$ is as well.  So $v_{\ell+1}$ only contains the prefix embedding of $p_{v_i}$.  Furthermore, since  $C[v_i,p_{v_i}]$ is the first admissible chain in $[p_{v_i},v_i]$ ending at the prefix embedding of $p_{v_i}$, for all $l_{i+1}<k<l_{\ell+1}$, either we have $v_\ell(k)=p_{v_i}(k)$ or we have $v_\ell(k)$ minimal and $p_{v_i}(k)=0$.  Therefore, for all $l_{i+1}<k<l_{\ell+1}$, either we have $v_\ell(k)=x(k)$ or we have $v_\ell(k)$ minimal and $x(k)=\hat{0}$.  This implies $C[v_\ell,x]$ is the first admissible chain in $[x,v_\ell]$ ending at the prefix embedding of $x$ and $v_\ell(l_{\ell+1})$ is the first reducible letter in $v_\ell$ greater than the corresponding position in the prefix embedding of $x$.  Thus, $l_{\ell+1}$ satisfies the definition of a principal index, and since $v_{\ell+1}$ only contains the prefix embedding of $p_{v_i}$, $x$ is a principal factor of $v_\ell$ and $C(v_\ell,x)$ is an MSI of $C$.  Since $x$ is the longest proper prefix of $p_{v_i}$ with two embeddings in  $v_i'$ that satisfies $x=p_{v_i}\setminus k$ for some $k$, and $C(v_i,p_{v_i})$ is a type (2) interval, no word between $p_{v_i}$ and $x$ can be a principal factor of a word $v_k$ in $C$.  Therefore, $C(v_\ell,x)$ is reduced to the type (3) interval $C[p_{v_i},x)$ in $J(C)$, completing the reverse implication.

Now suppose that $C(v_i,p_{v_i})$ is followed by a type (3) interval $C[p_{v_i},x)$ in the set $J(C)$.  Then by Theorem~\ref{FJCstruc}, $x$ is a principal factor of a word $v$ in $C(v_i,p_{v_i})$ and $x=p_{v_i}\setminus k$ for some $k$.  From the proof of Theorem~\ref{FJCstruc}, we know $|v|=|v_i'|$ and $v$ has at least two embeddings in $v_i'$.  By Theorem~\ref{FJCstruc} part (3), it suffices to show that no longer prefix of $p_{v_i}$ containing $x$ has two embeddings in $v_i'$.  For a contradiction, suppose $y$ is such a prefix.  Then $y=xm_1\ldots m_\ell$ has at least two embeddings in $v_i'$, and by the argument in the paragraph above, is a principal factor of some word in $C(v_i,p_{v_i})$.  Thus, $C(v,y)$ would be an MSI in $I(C)$.  This would be reduced to the interval $C[p_{v_i},y)$, which is contained in $C[p_{v_i},x)$, implying that $C[p_{v_i},x)$ could not be in $J(C)$.  This is a contradiction.  Thus, $x$ is the longest proper prefix of $p_{v_i}$ with two embeddings in $v_i'$, implying it is the primary prefix of the word $p_{v_i}$. 
\end{proof}

Let $\mu(u,v)$ be the normal M\"obius function if $u$ and $v$ are both elements of $F^*$, or zero if either is undefined.  Define the function $\nu(u,v)$ to be 
$$\nu(u,v)=\displaystyle\sum_{i\geq0} \mu(u,v\setminus i).$$
Notice all the terms in the summation will be zero beyond the largest value $i=k$ for which $v\setminus k$ is defined, or the smallest value $i=k$ for which $v\setminus k\leq u$.

We are now ready to state the contribution of critical chains whose first type (2) interval is a specific interval.  By replacing the words $p_{v_i}\setminus 1^k$ in the proof of Proposition~\ref{type2} by $p_{v_i}\setminus k$, it is easy to adapt that proof to work for the next proposition.

\begin{Prop}
\label{Ftype2}
Suppose $w$ is not rooted and $C(v_i,p_{v_i})$ is the first type (2) interval of some critical chain $C$ of $[u,w]$.  Then $p_{v_i}$ is a principal factor of $w$ of degree $i$ and $v_i$ is based in $C$.  Furthermore, the contribution to the M\"obius value $\mu(u,w)$ of all critical chains in $[u,w]$ that have $C(v_i,p_{v_i})$ as the first type (2) interval  is
$$\displaystyle (-1)^i\left(\nu(u,p_{v_i})-  \nu(u,x(p_{v_i}))\right),$$
where $x(p_{v_i})$ is the primary prefix of $p_{v_i}$.\qed
\end{Prop}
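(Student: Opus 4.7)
The plan is to adapt the proof of Proposition~\ref{type2} in an essentially mechanical way. The substitutions are: wherever $1$ appeared as the unique minimum of $\P$, it is replaced by an arbitrary minimal element of $F$; wherever $p_{v_i}\setminus 1^k$ appeared, it is replaced by $p_{v_i}\setminus k$; and wherever Theorem~\ref{gMSIchar}, Lemma~\ref{flc}, Theorem~\ref{JCstruc}, or Proposition~\ref{pmpfx} was invoked, the rooted-forest analogues Theorem~\ref{FMSIchar}, Lemma~\ref{Fflc}, Theorem~\ref{FJCstruc}, and Proposition~\ref{Fpmpfx} are used instead.

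First I would establish the preliminary claim that $p_{v_i}$ is a principal factor of $w$ of degree $i$ and that $v_i$ is based in $C$. Since $C(v_i,p_{v_i})$ is of type (2), Theorem~\ref{FJCstruc} gives that $p_{v_i}$ is a principal factor of $v_i$, settling the case $i=0$ immediately. For $i>0$, because $C(v_i,p_{v_i})$ is the \emph{first} type (2) interval, each of $v_1,\dots,v_i$ lies in a type (1) interval by Theorem~\ref{FJCstruc}, so the labels $l_1,\dots,l_i$ are strictly decreasing and each reduces a distinct position of $w$ by one rank. This identifies $v_i$ as a base of $w$ of degree $i$, verifying that $v_i$ is based in $C$. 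The condition $l_{i+1}<l_i$ forces the principal index of $p_{v_i}$ in $v_i$ to lie strictly before the first reduced position, which is precisely what it means for $p_{v_i}$ to be a principal factor of $w$ of degree $i$.

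Next I would compute the contribution by the same case split as in Proposition~\ref{type2}, depending on whether $p_{v_i}$ ends in a minimal letter. If it does not, then $p_{v_i}\setminus k$ is undefined for $k\geq 1$, so $\nu(u,p_{v_i})=\mu(u,p_{v_i})$ and $x(p_{v_i})$ is undefined; in any continuation, $p_{v_i}$ is automatically a strong descent and so lies in a type (1) interval. This gives a bijection between critical chains of $[u,w]$ with $C(v_i,p_{v_i})$ as the first type (2) interval and critical chains of $[u,p_{v_i}]$, whose $J$-sets differ by the $i+2$ intervals $v_1,\dots,v_i,C(v_i,p_{v_i}),p_{v_i}$, yielding $(-1)^{i+2}\mu(u,p_{v_i})=(-1)^i\nu(u,p_{v_i})$ by Theorem~\ref{chainmu}. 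If $p_{v_i}$ does end in a minimal letter, set $v_k=p_{v_i}$ and split further according to whether $l_{k+1}<|p_{v_i}|$ (a following type (1) interval) or $l_{k+1}=|p_{v_i}|$ (a following type (3) interval caused by $x(p_{v_i})$, by Proposition~\ref{Fpmpfx}). In the first subcase the bijection onto critical chains of $[u,p_{v_i}]$ excludes those whose first move trims the trailing minimal letter, so I would introduce a correction $O(u,p_{v_i})$ accounting for those chains and show $O(u,p_{v_i})=-\mu(u,p_{v_i}\setminus 1)+O(u,p_{v_i}\setminus 1)$; this telescopes (bottoming out when $p_{v_i}\setminus k$ is undefined or falls below $u$) into $-\sum_{k\geq 1}\mu(u,p_{v_i}\setminus k)$, so the subcase contributes $(-1)^i\nu(u,p_{v_i})$. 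In the second subcase, $x(p_{v_i})$ takes the role previously played by $p_{v_i}$, and the $i+3$ intervals $v_1,\dots,v_i,C(v_i,p_{v_i}),C[p_{v_i},x(p_{v_i})),x(p_{v_i})$ give $(-1)^{i+3}\nu(u,x(p_{v_i}))=-(-1)^i\nu(u,x(p_{v_i}))$. Summing yields the stated formula.

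The main obstacle I anticipate is the recursive telescoping of $O(u,p_{v_i})$: one must verify that the ``forbidden first moves'' at each recursive stage correspond exactly to trimming one additional minimal letter from the suffix of $p_{v_i}$, and not to anything else. This turns on the fact, from Theorem~\ref{FMSIchar}, that the principal index of any principal factor is strictly greater than $1$, so a first move at position $|p_{v_i}|$ cannot itself begin a type (2) or type (3) interval and must sit in a type (1) interval. Everything else is a careful translation of the positive-integer argument under the substitution $p_{v_i}\setminus 1^k\mapsto p_{v_i}\setminus k$.
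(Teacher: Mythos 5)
Your proposal is correct and takes essentially the same approach as the paper, which itself proves this proposition by stating that the proof of Proposition~\ref{type2} adapts under the substitution $p_{v_i}\setminus 1^k\mapsto p_{v_i}\setminus k$ together with the rooted-forest analogues of the cited lemmas. Your elaboration of the case split, the bijections, the $O(u,p_{v_i})$ recursion and its telescoping, and the identification of the obstacle at the telescoping step are all faithful to the positive-integer argument and transfer correctly to $F^*$.
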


Our next goal is to consider the intervals $[u,w]$ for $w$ rooted.  Note that when $w$ is rooted, the forest case reduces to the antichain case.  Therefore, the formula for $\mu(u,w)$ is given by Theorem~\ref{muofo}, which the reader may wish to refresh at this time.  Thus, we must show this theorem is now a special case of our formula from Section 3.  The key fact in the proof is that the primary prefix of a rooted principal factor $p_{v_i}$ is $p_{v_i}\setminus 1$, which was also the key in proving Bj\"orner's formula using discrete Morse theory.

Let $\rho$ denote the rank function in $F^*$, and $\rho(u,w)=\rho(w)-\rho(u)$.  For simplicity, let $0\leq t\leq2$ be the number of critical chains in $[u,w]$ that consist entirely of strong descents, and define
$$d(u,w)=\begin{cases}
t(-1)^{\rho(u,w)} & \mbox{if } \rho(u,w)>1\\
(-1)^{\rho(u,w)} & \mbox{if } \rho(u,w)\leq1.\\
\end{cases}$$

\begin{Prop}
\label{Muord}
Suppose $u\leq w$ in the poset $F^*$ and $w$ is a rooted word.  Then 
$$\displaystyle\mu(u,w)=d(u,w)+\sum(-1)^i\left(\nu(u,p_{v_i})-\nu(u,x(p_{v_i}))\right)=d(u,w)+\mu(u,o(w)),$$
where $o(w)$ is the outer word as defined on page 1 and the sum is over all triples $v_i,p_{v_i},x(p_{v_i})$ such that $p_{v_i}$ is a principal factor of $w$ of degree $i$ with base $v_i$ and primary prefix $x(p_{v_i})$.
\end{Prop}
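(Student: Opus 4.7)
The plan is to prove both equalities by reducing the rooted case to ordinary factor order. Since $w$ is rooted, every letter of $w$ is a minimal element of $F$, and all minimal elements of a forest are pairwise incomparable. Consequently, the interval $[u,w]$ in $F^*$ is poset-isomorphic, with identical MSI structure under the poset lexicographic order, to the corresponding interval in ordinary factor order on an antichain. In particular, Bj\"orner's Theorem~\ref{muofo} computes $\mu(u,w)$.

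The second equality $\mu(u,w)=d(u,w)+\mu(u,o(w))$ is then established by checking case-by-case against Theorem~\ref{muofo}. When $|w|-|u|>2$ and $u\leq o(w)\not\leq i(w)$, Proposition~\ref{Falldes} forces $d(u,w)=0$ (since that proposition requires $|w|-|u|\leq 2$) and Bj\"orner gives $\mu(u,w)=\mu(u,o(w))$. For $|w|-|u|\leq 2$ one enumerates the embeddings satisfying Proposition~\ref{Falldes} to compute the value $t$ and verifies in each remaining case of Bj\"orner's formula, including the flat cases and the ``otherwise'' branch, that $d(u,w)+\mu(u,o(w))$ matches the prescribed value (noting $\mu(u,o(w))=0$ when $u\not\leq o(w)$).

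For the first equality, I would adapt the critical-chain machinery of Section~\ref{gfoTF}, specifically the analogs of Theorem~\ref{FJCstruc} and Proposition~\ref{Ftype2}, to the rooted setting. The new feature, as noted around Proposition~\ref{Fpfover}, is that in the rooted case adjacent or overlapping type~(2) MSIs may occur within a single critical chain. Each critical chain contributes to $\mu(u,w)$ via the triple $(v_i,p_{v_i},x(p_{v_i}))$ corresponding to its first type~(2) interval; the telescoping identity, together with the observation from the discussion preceding the proposition that $x(p_{v_0})=p_{v_0}\setminus 1$ for the rooted degree-$0$ principal factor, collapses the degree-$0$ contribution to $\nu(u,o(w))-\nu(u,o(w)\setminus 1)=\mu(u,o(w))$.

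The hardest step will be showing that higher-degree triples do not contribute in the rooted case. I would argue that whenever $v_i$ is a degree-$i$ base with $i\geq 1$ and $p_{v_i}$ is a principal factor of $v_i$, the first admissible chain from $v_i$ down to $p_{v_i}$ can only occur within a critical chain that also contains the first admissible chain from $w$ to a lower-degree principal factor (ultimately $o(w)$) as an earlier block; this forces an earlier type~(2) MSI, so $C(v_i,p_{v_i})$ is never the first type~(2) interval and its triple contributes zero. Hence the sum collapses to the degree-$0$ term, completing the first equality and matching the second.
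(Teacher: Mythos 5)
Your outline matches the paper's: reduce to ordinary factor order on an antichain, collapse the summation to the single degree-$0$ term $\nu(u,o(w))-\nu(u,x(o(w)))=\mu(u,o(w))$, and then verify $d(u,w)+\mu(u,o(w))$ against Bj\"orner's formula case by case. Your treatment of the second equality — enumerating the cases of Theorem~\ref{muofo} and computing $t$ via Proposition~\ref{Falldes} in the $|w|-|u|\leq 2$ regime — is essentially what the paper does.

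Where you diverge is the first equality, and your route there is both different from the paper's and the weaker of the two. The paper opens the proof with a short, direct positional observation: for a rooted base $v_i$, the principal index of any principal factor $p_{v_i}$ is necessarily the last position $|v_i|\neq 1$ (since for a rooted word only the first and last letters are reducible, and the first letter coincides with the prefix embedding of $p_{v_i}$), whereas for $v_i$ to be a strong descent — required for it to sit in a type~(1) interval ahead of a type~(2) interval — the label $l_{i+1}$ would have to remove the \emph{first} letter of $v_i$. Since $l_{i+1}$ cannot be both $1$ and $|v_i|$, no degree-$i$ triple with $i\geq 1$ can head a first type~(2) interval, and the sum collapses to the degree-$0$ term. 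Your argument instead asserts that the first admissible chain from $v_i$ to $p_{v_i}$ ``can only occur within a critical chain that also contains the first admissible chain from $w$ to a lower-degree principal factor as an earlier block,'' and concludes an earlier type~(2) MSI forces itself in. This nesting does in fact happen in the rooted case (a decreasing chain id must remove from the back step by step), but you don't prove it, and more to the point it does not directly discharge the summand: the formula ranges over triples, not over critical chains, so you need the sharper statement that the triple is simply incompatible with a first type~(2) interval. Replace the nested-chains claim with the positional incompatibility $|v_i|\neq 1$, which is the crux of the paper's argument and gives exactly what you need to drop the higher-degree terms.
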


\begin{proof} First we show rooted words can only have principal factors of degree $0$.  Since any rooted strong descent $v_i$ satisfies $v_{i+1}=mv_{i-1}n$ for some minimal elements $m$ and $n$, and every principal factor of $v_i$ has principal index $|v_i|\neq1$, rooted words do not have bases of positive degree. Thus, they cannot have principal factors of positive degree either.

We need to show the formula in Theorem~\ref{muofo} agrees with the one in the statement of this proposition.  Suppose $|w|-|u|\leq2$ and $u\neq o(w)$.  Then by Proposition~\ref{Falldes} and Theorem~\ref{muofo},
$$\mu(u,w)=d(u,w).$$
Furthermore, by the definition of principal factor, the only word that can be a principal factor is the outer word $o(w)$.  Note $|w|-|o(w)|>1$ when $o(w)$ is not flat.  So $u\not\leq o(w)$, implying the summation is $0$ because $\nu(u,o(w))=0$ and $\mu(u,o(w))=0$ by definition. This completes the proof in this case.

Next, suppose $|w|-|u|> 2$, $u\leq o(w)\setminus 1$, and $o(w)\not\leq i(w)$.  Note $o(w)$ is the unique principal factor of $w$.  From the discussion following the definition of the primary prefix, we know $o(w)\setminus 1$ is the primary prefix of $o(w)$.  Furthermore, by Proposition~\ref{Falldes}, $d(u,w)$=0 because $|w|-|u|>2$.  Thus,
\begin{align*}
\sum(-1)^i\left(\nu(u,p_{v_i})-\nu(u,x(p_{v_i}))\right)=&\nu(u,o(w))-\nu(u,x(o(w)))\\
=&\mu(u,o(w))+\nu(u,o(w)\setminus 1)-\nu(u,o(w)\setminus 1)\\
=&\mu(u,o(w)),
\end{align*}
completing the proof in this case.

Suppose $|w|-|u|\geq 2$, $u\leq o(w)$, $u\not\leq o(w)\setminus 1$ and $o(w)\not\leq i(w)$.  Note $o(w)$ is the unique principal factor of $w$, but the primary prefix of $o(w)$, $o(w)\setminus 1$, does not contain $u$.  Furthermore, by Proposition~\ref{Falldes}, $d(u,w)$=0 because $|w|-|u|\geq 2$ and $u\neq i(w)$.  Thus, 
\begin{align*}
\sum(-1)^i\left(\nu(u,p_{v_i})-\nu(u,x(p_{v_i}))\right)=&\nu(u,o(w))-0\\
=&\mu(u,o(w))+\nu(u,o(w)\setminus 1)\\
=&\mu(u,o(w)),
\end{align*}
completing the proof in this case.

Finally, in all other cases, $|w|-|u|> 2$ and $o(w)\leq i(w)$.  Since $|w|-|u|> 2$, $d(u,w)=0$.  Since $o(w)\leq i(w)$, $o(w)$ is not a principal factor of $w$, meaning the summation is $0$ as well.  This agrees with Theorem~\ref{muofo}, which states $\mu(u,w)=0$ in all other cases, completing the proof.
\end{proof}

Using Propositions~\ref{Falldes},~\ref{Ftype2}, and~\ref{Muord} we are able to write down a formula for $\mu(u,w)$ for $u\leq w$ in $F^*$.  Thus, this formula applies to all posets $P^*$ ordered by generalized factor order in which each element of the base poset $P$ covers a unique element.

\begin{Thm}
\label{FMu2}
Suppose $u\leq w$ in the poset $F^*$.  Then 
$$\displaystyle\mu(u,w)=d(u,w)+\sum(-1)^i\left(\nu(u,p_{v_i})-\nu(u,x(p_{v_i}))\right),$$
where the  sum is over all triples $v_i,p_{v_i},x(p_{v_i})$ such that $p_{v_i}$ is a principal factor of $w$ of degree $i$ with base $v_i$ and primary prefix $x(p_{v_i})$.
\end{Thm}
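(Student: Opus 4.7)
The plan is to mirror the strategy used for Theorem~\ref{Mu2} in Section~\ref{gfo}, but split into the rooted and non-rooted cases for $w$ and then observe that both give rise to the same formula. First I would dispense with the trivial ranks: when $\rho(u,w) \leq 1$, the open interval $(u,w)$ is empty or undefined, so both summation terms vanish, and $d(u,w)$ returns the correct values $1$ (when $u=w$) and $-1$ (when $w$ covers $u$) by construction. This reduces the problem to $\rho(u,w) > 1$, where Theorem~\ref{chainmu} applies and $\mu(u,w) = \sum_C (-1)^{\#J(C)-1}$ summed over critical chains $C$ in $[u,w]$.

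For $\rho(u,w) > 1$, I would partition the critical chains of $[u,w]$ into two classes according to whether or not they contain a type~(2) interval, and show the two classes contribute $d(u,w)$ and the sum, respectively. For chains consisting entirely of strong descents, Proposition~\ref{Falldes} gives at most two such chains (counted by $t$), each with $J(C)$ a union of $\rho(u,w)-1$ length-one MSIs, so each contributes $(-1)^{\rho(u,w)-2} = (-1)^{\rho(u,w)}$, yielding the total $t(-1)^{\rho(u,w)} = d(u,w)$. For the remaining critical chains, which by Theorem~\ref{FMSIchar} must contain at least one MSI caused by a principal factor, I would look at the first type~(2) interval $C(v_i, p_{v_i})$ in the chain.

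The non-rooted case for $w$ is handled directly: Theorem~\ref{FJCstruc} ensures the $J(C)$-decomposition splits into the three interval types and in particular forces $v_i$ to be based in $C$, so that $p_{v_i}$ is a principal factor of $w$ of degree $i$. Then Proposition~\ref{Ftype2} gives the exact contribution $(-1)^i(\nu(u, p_{v_i}) - \nu(u, x(p_{v_i})))$ of all critical chains with this first type~(2) interval, and summing over the allowable triples $(v_i, p_{v_i}, x(p_{v_i}))$ yields the required sum. The rooted case for $w$ is entirely subsumed by Proposition~\ref{Muord}, which already proves the equality $\mu(u,w) = d(u,w) + \sum(-1)^i(\nu(u,p_{v_i}) - \nu(u, x(p_{v_i})))$ when $w$ is rooted by showing each case of Bj\"orner's formula (Theorem~\ref{muofo}) matches a specific configuration of principal factors and primary prefixes.

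The main obstacle is really packaging the two cases together cleanly: the non-rooted case relies on the structural Theorem~\ref{FJCstruc}, which is explicitly stated only for non-rooted $w$ because Proposition~\ref{Fpfover} (no adjacent type~(2) intervals) fails when rooted words enter via a principal factor that is itself rooted. However, since Proposition~\ref{Muord} handles the rooted case independently and produces exactly the same right-hand side, no new argument is needed once we treat the two cases separately and observe that $d(u,w)$ and the sum are defined identically in both. The verification amounts to noting that in every case the enumeration of critical chains is exhaustive (by Theorem~\ref{FMSIchar}) and the contributions add up as claimed.
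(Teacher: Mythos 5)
Your proof is correct and follows the same approach as the paper, which simply says ``if $w$ is rooted, use Proposition~\ref{Muord}; if $w$ is unrooted, adapt the proof of Theorem~\ref{Mu2}.'' You have in fact spelled out more of the details (the rank-$\leq 1$ base case, the partition of critical chains into those with and without type~(2) intervals, and the roles of Propositions~\ref{Falldes} and~\ref{Ftype2} and Theorem~\ref{FJCstruc}) than the paper itself does, and you correctly identify the reason the case split is required, namely that Theorem~\ref{FJCstruc} only applies to unrooted $w$.
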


\begin{proof} 
If $w$ is rooted, the result follows from Proposition~\ref{Muord}.  

If $w$ is unrooted, the proof of the desired result is an easy adaptation of the proof of Theorem~\ref{Mu2}.
\end{proof}

\section{Future Research and Open Problems}
\label{open}

\subsection{Generalizing and Simplifying this Formula}

As noted at the end of Section 3, many of the coefficients in our formula for generalized factor order on the integers are zero.  This phenomenon is even more pronounced in the rooted forest case.  Since our formula simplifies considerably in the case of rooted words, it is natural to wonder whether the general formula can be simplified as well.

Should this formula be simplified, it will likely be done in one of two ways.  It may be possible to find a formula which applies to a more general class of posets ordered by generalized factor order.  Sagan and McNamara~\cite{MS11} were able prove a formula that works for any poset ordered by generalized subword order using a different approach than the one in~\cite{SV06}.  Thus, it is possible a different approach could result in a more general formula, which may be simpler than the one given here.

To investigate generalized factor order on other posets $P^*$, one needs to consider words which cover multiple elements, complicating the poset lexicographic order we used in this investigation.  One way to resolve these complications is to place an order on the children of each element of $P$.  This would generalize our current chain ids by including a subscript on the label indicating which child each letter is reduced to.  In the context of $F^*$, such subscripts would always be $1$ because each element has a unique child, making this new type of chain id a clear generalization of the current one. Initial data suggests it is worth pursuing this line of thought to see if a formula can be found in more general cases.

It may also be possible to simplify the formula at the level of critical simplices.  In particular, it is often the case that critical simplices of dimension $d$ and $d+1$ are present when a coefficient of $\nu(u,p_{v_i})$ is zero.  This leads one to suspect it may be possible to use the discrete analogue of ``The First Cancellation Theorem'' from smooth Morse Theory to cancel critical simplices.

However, it is not clear whether reducing the number of critical simplices in such a manner would result in a simplified formula.  For example, it is often the case that a principal factor $p$ will have have unique minimal degree base $b_i$ (of degree $i$) and maximal base degree base $b_j$ (of degree $j$) such that if $b_k$ (of degree $k$) is another base, then $i<k<j$.  When this happens, a binomial sum zeros out the coefficient of $\nu(u,p)$. But this is not always the case - the smallest counterexample we found was the word $u=2111222$ in the interval $[2111222,2111222112221222]$, in which $u$ occurs as a principal factor of degree $1$ twice, degree $2$ once, but is not a principal factor of degree $0$ because $21112221111$ is a longer outer factor.  We were not able to reconcile the previous observation with this exception to it in a desirable manner.

\subsection{The Topology of Posets ordered by Generalized Factor Order.}

Besides being useful results in proving the formula given for the M\"obius function of $\P^*$ and $F^*$, Theorems~\ref{JCstruc} and~\ref{FJCstruc} can be used to get a detailed description of the critical simplices of intervals in $\P^*$ and $F^*$.  Thus, it is a first step into investigating the homotopy type of these posets.  The next step is again checking whether there are critical simplices of dimension $d$ and $d+1$ which cancel each other out.

\subsection{The Consecutive Pattern Poset and Ordinary Factor Order}

In a paper submitted to the arXiv in 2011~\cite{BFS11}, Bernini, Ferrari and Steingr\'imsson calculate the M\"obius function of the consecutive pattern poset.  Let $S_d$ be the set of all permutations of the first $d$ positive integers. A consecutive pattern $\sigma=a_1a_2\ldots a_k$ appears in a permutation $\tau=b_1b_2\ldots b_n$ if the letters of some subsequence $b_ib_{i+1}\ldots b_{i+k-1}$ of $\tau$ appear in the same order of size as the letters in $\sigma$.  The consecutive pattern poset is $\cup_{d\geq0}S_d$ ordered with respect to consecutive pattern containment.

One of their results is a formula for M\"obius function of the consecutive pattern poset which has many similarities to Bj\"orner's formula for the M\"obius function of ordinary factor order.  This suggests there may be some common generalization of these posets, and this is the topic of a paper of Sagan and Willenbring~\cite{SW11}.

\appendix
\section{The Matching of Babson and Hersh}
\label{BHmatch}

In~\cite{BH05}, Babson and Hersh give the acyclic matching of simplices in $\Delta(u,w)$ based on whether the sets $I(C)$ and $J(C)$ cover $C$. Although knowledge of this matching is not required to apply Theorem~\ref{chainmu}, we record the matching below for completeness.

The Matching:

$\bullet$ If $I(C)$ does not cover $C(w,u)$, let $\rho_0$ be the lowest rank (that is, the last) vertex not covered by $I(C)$.

Match each new simplex $h$ with $h\bigtriangleup\{\rho_0\}$, where $\bigtriangleup$ is the symmetric difference operator (that is, $h\setminus\{\rho_0\}$ if $\rho_0\in h$ and $h\cup\{\rho_0\}$ if $\rho_0\not\in h$.)  This matching matches all new simplices of $C$.

Note that $h\bigtriangleup\{\rho_0\}$ is always in $ C\setminus(\bigcup_{C'<C} C')$ because the inclusion/exclusion of $\rho_0$ does not affect whether the simplex hits every MSI.  Also, this matching works for the very first maximal chain since the empty set is consider a simplex in $\Delta(u,w)$.
\vskip1\baselineskip

$\bullet$ Otherwise, $I(C)$ covers $C$ and we base the matching on $J(C)=\{J_1,\ldots,J_r\}$.  Let $\rho_i$ be the lowest rank vertex of $J_i\in J(C)$.  Let $J_{r+1}=$ the set all vertices not in $J(C)$, and let $\rho_{r+1}$ be the lowest rank vertex in $J_{r+1}$.  Define a map $\tau$ that associates an integer with each new simplex $h$ based on the first set $J_i$ which $h$ intersects in more than the lowest rank element.  That is,
\begin{align*}
\tau: C\setminus(\bigcup_{C'<C} C') &\rightarrow [r]\cup\{\infty\}\\
h&\mapsto \min_{1\leq i\leq r} \{i|h\cap J_i\neq\{\rho_i\}\},
\end{align*}
setting $\tau h=\infty$ when $h$ intersects each $J_i$ in exactly the lowest rank element (that is, when the set $\{i|1\leq i\leq r, h\cap J_i\neq\{\rho_i\}\}$ is empty.)

First, if $\tau h\neq \infty$, match the new simplex $h$ with $h\bigtriangleup\{\rho_{\tau h}\}$. This matches all simplices for which $\tau h\neq \infty$.

If $J_{r+1}=\emptyset$, then $J(C)$ covers $C$ and there is one simplex satisfying $\tau h= \infty$.  This simplex contains each $\rho_i$.  Thus,  there is one simplex unmatched and it is a critical simplex of this matching.  

If $J_{r+1}\neq\emptyset$, then $J(C)$ does not cover $C$ even though $I(C)$ does.  In this case, we match each simplex $h$ satisfying $\tau h= \infty$ with $h\bigtriangleup\{\rho_{r+1}\}$.  This matches all simplices for which $\tau h= \infty$.

The matching is more complicated when $I(C)$ covers $C$ but $J(C)$ does not because $\rho_{r+1}$ is in $I(C)$.  Thus, removing it from a simplex does not guarantee every MSI is still hit, meaning that $h\setminus\{\rho_{r+1}\}$ might not be in $C\setminus(\bigcup_{C'<C} C')$.  To guarantee $h\setminus\{\rho_{r+1}\}$ is a new simplex, we need the lowest rank element of each $J_i$, $\rho_i$, in $h$.  Indeed, any interval from $I(C)$ reduced and not included in $J(C)$ intersects some interval $J_i$ in at least the element $\rho_i$.  This issue led to a mistake in the published version of Babson and Hersh's article, but this mistake is corrected in newer versions.

Please refer to Tables~\ref{table:MSIs} and~\ref{table:JC} for the context of the below examples.

$\bullet$ $I(C)$ does not cover $C$.  Example: Chain $1-2-5-3$ in the interval $[b,bbabb]$.  Match new simplices based on inclusion/exclusion of vertex $abb$.  That is, match $ab$ with $abb-ab$ and $babb-ab$ with $babb-abb-ab$.

$\bullet$ $J(C)$ covers $C$.  Example:  Chain $5-4-3-1$ in $[b,bbabb]$.  Match 2 of 3 new simplices, based on inclusion/exclusion of vertex $bba$.  That is, $bba-bb$ is an unmatched, critical simplex, while $bbab-bb$ is matched with $bbab-bba-bb$.

$\bullet$ $I(C)$ covers $C$, but $J(C)$ does not.  Example: Chain $6-5-4-3-2$ in $[a,abbabb]$.  We match all new simplices based on the matching rules above.  In particular, match $abbab-abb$ with $abbab-abba-abb$, $abbab-abb-ab$ with $abbab-abba-abb-ab$, and $abba-abb$ with $abba-abb-ab$.

\bibliographystyle{acm}
\bibliography{factorOrder}
\end{document}